\documentclass[12pt,reqno]{amsart}

\usepackage{amsmath,amssymb,amsthm}
\usepackage{mathrsfs}
\usepackage[utf8]{inputenc}
\usepackage[T1]{fontenc}
\usepackage{appendix}
\usepackage{enumitem}
\usepackage[protrusion=true,expansion=false]{microtype}
\usepackage{tikz-cd}
\usepackage[margin=1in]{geometry}
\usepackage{mathtools}

 \usepackage[colorlinks=true, linkcolor=blue, citecolor=blue]{hyperref}      
\usepackage{orcidlink}

\usepackage[capitalize,nameinlink]{cleveref}
\usepackage{todonotes}

\theoremstyle{plain}
\newtheorem{theorem}{Theorem}[section]
\newtheorem{proposition}[theorem]{Proposition}
\newtheorem{lemma}[theorem]{Lemma}
\newtheorem{corollary}[theorem]{Corollary}
\newtheorem*{acknow}{\textup{Acknowledgement}}

\theoremstyle{definition}
\newtheorem{definition}[theorem]{Definition}

\theoremstyle{remark}
\newtheorem{remark}[theorem]{Remark}

\newcommand{\OP}{\mathbb{OP}}

\newcommand{\HP}{\mathbb{HP}}

\newcommand{\Z}{\mathbb{Z}}
\newcommand{\Q}{\mathbb{Q}}
\newcommand{\R}{\mathbb{R}}
\newcommand{\s}{\mathbb{S}}

\newcommand{\diff}{\mathrm{DIFF}}

\newcommand{\Sdiff}{\mathcal{S}^{\diff}}

\newcommand{\Ndiff}{\mathcal{N}^{\diff}}

\newcommand{\sh}{\operatorname{sh}}
\newcommand{\ph}{\operatorname{ph}}
\newcommand{\ch}{\operatorname{ch}}
\newcommand{\AffiliationsBlock}{%
  \vspace{0.8em}
  \begingroup
  \centering
  \footnotesize\itshape
  $^{a}$ Department of Mathematics, Indian Institute of Technology Madras, \\
  Chennai-600036, Tamil Nadu, India.\\\vspace{2mm}
  $^{b}$ The Institute of Mathematical Sciences, A CI of Homi Bhabha National Institute,\\
  Chennai-600113,  Tamil Nadu, India.\par
  \endgroup
  \vspace{-1.3em}
}\makeatletter
\patchcmd{\@maketitle}{%
  \ifx\@empty\@dedicatory
  \else
}{%
  \AffiliationsBlock
  \vspace{1.5em}
  \ifx\@empty\@dedicatory
  \else
}{}{\message{Patch failed!}}
\makeatother

\begin{document}
\title[Structure sets of thickenings of the Cayley plane]{Relative
smooth surgery structure sets of thickenings of the Cayley
projective plane and applications}
 \author[S. Mandal]{{Souvik Mandal$^{\,a,*}$}
 }
 \author[A. Sarkar]{{Ankur Sarkar$^{\,b}$}
 }
\thanks{\hspace{-1.1em}* Corresponding author.}
\thanks{\raggedright\hspace{0.53em}
  \makebox[8.5em][l]{\textit{E-mail addresses:}}%
  \begin{minipage}[t]{0.75\textwidth}
    \texttt{ma22d014@smail.iitm.ac.in}, \texttt{ssouvik.xyz@gmail.com} (S. Mandal); \\
    \texttt{ankurimsc@gmail.com} (A. Sarkar).
  \end{minipage}}

\begin{abstract}
We compute the relative smooth surgery structure sets of the
thickenings $\mathbb{OP}^{2}\times\mathbb{D}^{k}$ of the Cayley
projective plane $\mathbb{OP}^{2}$ for every $k\geq 1$ with
$k\equiv 0\pmod 4$, by determining the corresponding normal
invariants and surgery obstruction map. We show that the latter is
not surjective and determine the $2$-adic valuation of the generator
of its image. As applications, we construct infinitely many
pairwise non-homeomorphic closed smooth manifolds of dimension
$16+k$, homotopy equivalent to $\mathbb{OP}^{2}\times\mathbb{S}^{k}$
and distinguished by their Pontryagin numbers; we compute the rational homotopy
groups of the block diffeomorphism group
$\widetilde{\operatorname{Diff}}(\mathbb{OP}^{2})$ in every degree congruent to $3$ modulo $4$; and we construct smooth
$\mathbb{OP}^{2}$-bundles over $\mathbb{S}^{4}$, $\mathbb{S}^{8}$,
and $\mathbb{S}^{12}$ whose total spaces have non-vanishing
$\widehat{\mathfrak{A}}$-genus. These bundles yield elements of infinite order in the homotopy
groups of the spaces of metrics of
positive sectional, Ricci, and scalar curvature on
$\mathbb{OP}^{2}$ in degrees $3$, $7$, and $11$.
\end{abstract}

\maketitle
\vspace{-0.5em}
\begin{center}
\begin{minipage}{0.845\textwidth}
    \footnotesize
    \begin{list}{}{%
        \leftmargin=5.5em 
        \labelwidth=5.5em
        \labelsep=0pt \parsep=0pt \topsep=0pt \itemsep=0pt
    }
        \item[\textit{Keywords:}\hfill] Cayley Projective Plane, Higher Smooth Surgery Structure
Sets, Normal Invariants, Adams Operations, Bernoulli Numbers, Block
Diffeomorphism Groups, Positive Sectional Curvature.
    \end{list}

    \vspace{4pt} 

    \begin{list}{}{%
        \leftmargin=18.5em 
        \labelwidth=18.5em
        \labelsep=0pt \parsep=0pt \topsep=0pt \itemsep=0pt
    }
        \item[2020\hspace{1mm}\textit{Mathematics Subject Classification:}\hfill] {Primary 57R65, 57R67;\\ Secondary 57R55, 55N15,
55Q50, 57S05, 58D17}
    \end{list}
\end{minipage}
\end{center}
\vspace{1em}
\section{Introduction}
In surgery theory, the relative smooth structure sets play a central role in the classification of smooth manifolds with boundary within a fixed homotopy type. These sets have been studied for a handful of manifolds,
beginning with the foundational treatment of Wall~\cite{Wal99} and, more
recently, in the work of Chang \emph{et al.} \cite{stanly}. Kalu\v{z}n\'y and Macko \cite{KM26} studied the relative smooth structure
sets of $X=\mathbb{CP}^{n}\times\mathbb{D}^{k}$, the thickenings of the
complex projective spaces. 
In this article, we study the relative smooth surgery structure sets of the thickenings of the Cayley projective plane. The Cayley projective plane $\OP^{2}$ (or octonionic projective plane) is a closed simply connected $16$-dimensional smooth manifold with CW descomposition $\OP^2=\mathbb{S}^{8}\cup_{\sigma}e^{16}$, where $\sigma\in\pi_{15}(\mathbb{S}^{8})$ is the octonionic Hopf map of Hopf invariant one \cite{Lac21}. 
It is the exceptional compact rank one symmetric space, and in particular
admits a metric of positive sectional curvature \cite[pp.~208--210]{peterson}.
Unlike the real, complex, and quaternionic cases, the non-associativity of
$\mathbb{O}$ implies that there are no higher-dimensional octonionic
projective spaces (see \cite{Lac21}), and hence $\OP^{2}$ is the only
octonionic projective plane.

For a compact smooth manifold $X$ with boundary, let $\Sdiff_{\partial}(X)$
denote its relative smooth structure set, and let $\mathcal{E}_{\partial}(X)$
denote the group of homotopy classes, relative to $\partial X$, of
self-homotopy equivalences of $(X,\partial X)$ restricting to the identity on
$\partial X$. This
group acts on $\Sdiff_{\partial}(X)$ by post-composition, and the resulting
orbit space $\Sdiff_{\partial}(X)\big/\mathcal{E}_{\partial}(X)$ is in
one-to-one correspondence with the set of diffeomorphism classes of compact
smooth manifolds $M$ admitting a homotopy equivalence of pairs
$(M,\partial M)\to(X,\partial X)$ restricting to a diffeomorphism on the
boundary \cite{KT01}. Thus the relative smooth structure set and the action of
$\mathcal{E}_{\partial}(X)$ on it are the two fundamental ingredients in this
classification problem.


For $X=\OP^{2}\times\mathbb{D}^{k}$ the group
$\mathcal{E}_{\partial}(\OP^{2}\times\mathbb{D}^{k})$ is the more
tractable of the two, containing at most four elements.
Indeed, forgetting the boundary condition we can view it inside the group $\mathcal{E}(\OP^{2}\times\mathbb{D}^{k})$ of homotopy
classes of self-homotopy equivalences of $\OP^2\times \mathbb{D}^k$.
Furthermore, the deformation retraction of $\OP^{2} \times \mathbb{D}^{k}$ onto $\OP^{2}$ identifies $\mathcal{E}(\OP^{2} \times \mathbb{D}^{k})$ with $\mathcal{E}(\OP^{2})$, which is studied in \cite{Oka}.
In this article, we compute the relative smooth structure set
$\Sdiff_{\partial}(\OP^{2}\times\mathbb{D}^{k})$ for
$k\equiv0\pmod4$ and $k\ge4$ by analyzing the associated relative
smooth surgery exact sequence.


The case $k=0$ was been studied by Eells--Kuiper
\cite{EK62a, EK62b}, Kramer \cite{Kra03}, Kramer--Stolz \cite{KS07}
and Su--Yang \cite{SW}. In this case, $\Sdiff_{\partial}(\OP^{2})$ is merely a pointed set, while for
$k\geq1$ the stacking operation endows
$\Sdiff_{\partial}(\OP^{2}\times\mathbb{D}^{k})$ with a group structure,
which is abelian for $k\geq2$ \cite[\S11.8]{LM24}.

A further motivation for computing the relative smooth structure sets
$\Sdiff_{\partial}(X\times\mathbb{D}^{k})$ is their close
relationship with the homotopy groups of certain automorphism spaces
of $X$.
In fact, if $X$ is a closed oriented smooth manifold of dimension at least five, then for every $k\geq 1$ there is an isomorphism \cite[1.1.4]{WW01} (see also \cite[p.~33]{stability}) 
\[
\Sdiff_{\partial}(X\times\mathbb{D}^{k})
\;\cong\;
\pi_{k}\bigl(\mathrm{hAut}(X)/
\widetilde{\operatorname{Diff}}(X)\bigr),
\qquad k\geq 1,
\]
where $\mathrm{hAut}(X)$ denotes the topological monoid of
orientation-preserving self-homotopy equivalences of $X$, and
$\widetilde{\operatorname{Diff}}(X)$ the block diffeomorphism group of $X$. The applications of this isomorphism to $X=\OP^{2}$ are discussed in Section~\ref{sec:applications}. 

We now state the main results of the article.

\begin{theorem}
\label{thm:main-structure}
For every $k \geq 1$ with $k \equiv 0 \pmod{4}$, there is an isomorphism of
abelian groups
\[
   \Sdiff_\partial(\OP^2 \times \mathbb{D}^k)
   \;\cong\;
   \Z^{2}\;\oplus\;T_k\;\oplus\;\Theta_k ,
\]
where $\Theta_k$ denotes the Kervaire-Milnor group
\cite{homotopysphere} and $T_{k}$ is the cokernel of the homomorphism
$[\Sigma^{k}\OP^{2},\Omega J]\colon
\bigl[\Sigma^{k}\OP^{2},\,O\bigr]\rightarrow\bigl[\Sigma^{k}\OP^{2},\,G\bigr]$
induced by the canonical map $J\colon BO\rightarrow BG$. Explicitly,
\[
   T_k \;=\;
   \begin{cases}
      \bigl[\Sigma^{k}\OP^{2},\,G\bigr], & k \equiv 4 \pmod 8,\\[4pt]
      \bigl[\Sigma^{k}\OP^{2},\,G\bigr]\big/(\Z/2), & k \equiv 0 \pmod 8.
   \end{cases}
 \]
The isomorphism above may be chosen in such a way that each quadruple
$(s,t,t_{1},t_{2})\in\Z^{2}\oplus T_{k}\oplus\Theta_{k}$ corresponds to the
element of $\Sdiff_\partial(\OP^{2}\times\mathbb{D}^{k})$ with relative smooth
normal invariant
\[
s\,\widetilde{\eta_{1,k}}+t\,\widetilde{\eta_{2,k}}+\overline{\gamma_{*}}(t_{1})+\eta^{\diff}_{\mathbb{S}^{k}}(t_{2}),
\]
where $\{\widetilde{\eta_{1,k}},\widetilde{\eta_{2,k}}\}$ is a $\Z$-basis,
constructed in Proposition~\ref{prop:kernel-sigma}, of the complement of the
torsion subgroup in the kernel of the surgery obstruction map
$\sigma^{\diff}_{16+k}\colon
\Ndiff_{\partial}(\OP^{2}\times\mathbb{D}^{k})\rightarrow L_{16+k}(\Z)$;
$\overline{\gamma_{*}}\colon T_{k}\to[\Sigma^{k}\OP^{2},G/O]$ is the
homomorphism induced by $\gamma_{*}\colon[\Sigma^{k}\OP^{2},G]\to[\Sigma^{k}\OP^{2},G/O]$, where
$\gamma\colon G\to G/O$ is the canonical map; and
$\eta^{\diff}_{\mathbb{S}^{k}}\colon\Theta_{k}\to\pi_{k}(G/O)$ denotes the
Kervaire--Milnor homomorphism, the groups
$\bigl[\Sigma^{k}\OP^{2},G/O\bigr]$ and $\pi_{k}(G/O)$ being regarded as
subgroups of $\Ndiff_{\partial}(\OP^{2}\times\mathbb{D}^{k})$ via the
splitting~\eqref{eq:N-PT-splitting}.
\end{theorem}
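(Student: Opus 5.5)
We use the relative smooth surgery exact sequence for $X=\OP^2\times\mathbb{D}^k$, which for $k\ge1$ is an exact sequence of abelian groups ($16+k\ge 5$):
\[
L_{17+k}(\Z)\xrightarrow{\ \partial\ }\Sdiff_\partial(X)\xrightarrow{\ \eta\ }\Ndiff_\partial(X)=[\Sigma^k\OP^2_+,G/O]\xrightarrow{\ \sigma^{\diff}_{16+k}\ }L_{16+k}(\Z).
\]
Since $k\equiv0\pmod4$, we have $17+k$ odd, so $L_{17+k}(\Z)=0$. Hence $\eta$ is injective, and $\Sdiff_\partial(X)\cong\ker\sigma^{\diff}_{16+k}$. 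The whole proof therefore reduces to describing this kernel.

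\emph{Step 1: the normal invariants.}
The cofibre sequence $\mathbb{S}^k\to\Sigma^k\OP^2_+\to\Sigma^k\OP^2$ splits stably because the bottom cell splits off a product with a disc. This gives the splitting \eqref{eq:N-PT-splitting},
\[
\Ndiff_\partial(X)\cong[\Sigma^k\OP^2,G/O]\oplus\pi_k(G/O).
\]
We then analyse $[\Sigma^k\OP^2,G/O]$ using the fibration $O\to G\to G/O\to BO\to BG$:
\[
[\Sigma^k\OP^2,O]\to[\Sigma^k\OP^2,G]\xrightarrow{\gamma_*}[\Sigma^k\OP^2,G/O]\to[\Sigma^k\OP^2,BO]\xrightarrow{J}[\Sigma^k\OP^2,BG].
\]
Here $[\Sigma^k\OP^2,BO]=\widetilde{KO}^{-k}(\OP^2)\cong\Z^2$ for $k\equiv0\pmod 4$, since the cells sit in dimensions $8+k$ and $16+k$, both divisible by $4$. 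The target $[\Sigma^k\OP^2,BG]$ is finite, so the kernel of $J$ is a rank-$2$ free group, and the sequence gives an extension
\[
0\to T_k\xrightarrow{\overline{\gamma_*}}[\Sigma^k\OP^2,G/O]\to\Z^2\to0,
\]
which splits because the quotient is free.

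To identify $T_k$ we compute the image of $[\Sigma^k\OP^2,O]=\widetilde{KO}^{-k-1}(\OP^2)$. Via the cell structure this is $KO^{-k-9}(\mathrm{pt})\oplus KO^{-k-17}(\mathrm{pt})$:
\begin{itemize}
\item for $k\equiv4\pmod8$ both groups are $KO^{-5}$ or $KO^{-13}$, which vanish, so $T_k=[\Sigma^k\OP^2,G]$;
\item for $k\equiv0\pmod8$ they are $KO^{-1}=\Z/2$ on each cell.
\end{itemize}
In the second case we will argue, using the image of $J$ in degrees $\equiv1\pmod 8$ (Adams' $\mu$-family, detected by $d$-invariants) together with the attaching map $\sigma$, that the image of $J$ is exactly one $\Z/2$. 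This requires checking that one class survives the cell extension and is detected, while the other is killed or identified by the Hopf invariant one map. I expect this to be a delicate Puppe/Toda-bracket argument.

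\emph{Step 2: the surgery obstruction.}
On the $\pi_k(G/O)$ summand, $\sigma$ restricts to the obstruction of the sphere, whose kernel is $\operatorname{im}(\Theta_k\to\pi_k(G/O))\cong\Theta_k$ in the Kervaire--Milnor sequence. (For $k\equiv 0\pmod 4$, $bP_{k+1}=0$, so $\eta^{\diff}_{\mathbb{S}^k}$ is injective.) On torsion classes $\sigma$ vanishes because $L_{16+k}(\Z)=\Z$ is torsion-free, so all of $\overline{\gamma_*}(T_k)$ lies in the kernel.

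On the free part, $\sigma$ is computed by the signature formula
\[
\tfrac18\bigl(\langle\mathcal{L}(\nu)\mathcal{L}(X),[X,\partial X]\rangle-\operatorname{sign}\bigr).
\]
Via Pontryagin character and Adams operations, this becomes a linear form on lifts of the $\Z^2$ generators together with the free part of $\pi_k(G/O)$, with coefficients involving Bernoulli numbers. Its kernel on the total free part has rank $2$ (total rank $3$ minus one); this is the content of Proposition~\ref{prop:kernel-sigma}, which supplies the basis $\widetilde{\eta_{1,k}},\widetilde{\eta_{2,k}}$.

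\emph{Step 3: assembling the kernel.}
Combining these, $\ker\sigma$ is the internal direct sum of $\Z\widetilde{\eta_{1,k}}\oplus\Z\widetilde{\eta_{2,k}}$, $\overline{\gamma_*}(T_k)$, and $\eta^{\diff}_{\mathbb{S}^k}(\Theta_k)$. Here we must verify two things:
\begin{itemize}
\item the torsion of $\ker\sigma$ is exactly $\overline{\gamma_*}(T_k)\oplus\eta^{\diff}_{\mathbb{S}^k}(\Theta_k)$, which follows from the splitting in Step 1;
\item the free complement is saturated, which is automatic for a kernel.
\end{itemize}
Pulling back along $\eta$ gives the stated isomorphism and the explicit normal invariant formula.

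The main obstacle I anticipate is the $k\equiv0\pmod8$ identification of the image of $[\Sigma^k\OP^2,O]\to[\Sigma^k\OP^2,G]$ as a single $\Z/2$. For this I would first try the $e$-invariant/$KO$-theoretic detection of the $\mu$-elements on the two cells, exploiting the fact that $\sigma$ has Hopf invariant one.
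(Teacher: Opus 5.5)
Your reduction is the same as the paper's. $L_{17+k}(\Z)=0$ identifies $\Sdiff_\partial(\OP^2\times\mathbb{D}^k)$ with $\operatorname{Ker}\sigma^{\diff}_{16+k}$, and the sequence $O\to G\to G/O\to BO\to BG$ gives a split extension of a rank-two free group by $T_k$. Then $\sigma^{\diff}_{16+k}$ kills torsion, and Proposition~\ref{prop:kernel-sigma} supplies $\widetilde{\eta_{1,k}},\widetilde{\eta_{2,k}}$.

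The gap is the one you flag yourself. The explicit formula $T_k=[\Sigma^k\OP^2,G]/(\Z/2)$ for $k\equiv 0\pmod 8$ is part of the statement, and you do not prove it. Moreover, the route you sketch aims at the wrong classes. The two $\Z/2$'s in $[\Sigma^k\OP^2,O]\cong[\Sigma^{k+1}\OP^2,BO]$ come from $\pi_{k+9}(BO)$ and $\pi_{k+17}(BO)$. Their $J$-images lie in $\pi_{k+9}(BG)=\pi^s_{k+8}$ and $\pi_{k+17}(BG)=\pi^s_{k+16}$, which are stems $\equiv 0\pmod 8$. These are $\eta\rho$-type classes detected by the $e$-invariant; the $d$-invariant vanishes on them, and the $\mu$-family does not live there.

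What must actually be shown is the following.
\begin{itemize}
\item[(i)] $J(\alpha_{k+1})\neq 0$. This follows by restricting to the bottom cell, since $J\colon\pi_{8j}(O)\to\pi^s_{8j}$ is injective for $j\ge 1$.
\item[(ii)] $J(\alpha^2_{k+1})=0$, i.e.\ the top-cell class $J(\beta_{k+17})$ lies in $\pi^s_{k+9}\circ\sigma$.
\end{itemize}
Statement (ii) is a relation of the type $\mu\sigma=\eta\rho$. This is the only place a $\mu$-element enters: it is the class whose composite with $\sigma$ kills the top cell. You give no argument for it.

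The paper avoids Toda brackets altogether, as follows.
\begin{itemize}
\item It uses $\widetilde{KO}^{-k-1}(\OP^2)=\Z/2\{\alpha_{k+1}\}\oplus\Z/2\{\alpha^2_{k+1}\}$ (Lemma~\ref{lem:module-basis}(b)). This needs the vanishing of the maps induced by $\sigma$ on $KO$ and the relation $2\alpha_{k+1}=0$; your ``via the cell structure'' direct sum passes over both.
\item It computes $\Psi^3\alpha_{k+1}=\alpha_{k+1}+\alpha^2_{k+1}$, because $3^4(3^4-1)/240=27$ is odd (Proposition~\ref{prop:adams-ops-1mod8}).
\item Adams' description $\operatorname{Ker}J=\bigcap_e W(e,X)$, together with Quillen's proof of the Adams conjecture, then gives $\operatorname{Ker}[\Sigma^{k+1}\OP^2,J]=\Z/2\{\alpha^2_{k+1}\}$ (Proposition~\ref{kerk1}).
\end{itemize}
Hence the image of $[\Sigma^k\OP^2,\Omega J]$ has order exactly $2$. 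Without this input, or a completed bracket computation, your argument gives the isomorphism only with $T_k$ as an abstract cokernel. It does not give the explicit form asserted for $k\equiv 0\pmod 8$.
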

The coefficients $s$ and $t$ are related to the underlying manifold through
the Pontryagin numbers computed in Theorem~\ref{thm:st-pontryagin}.

\begin{theorem}
\label{thm:st-pontryagin}
Let $k=4m$ with $m\ge1$, and let $[(M,\partial M),(f,\partial f)]
\in\Sdiff_\partial(\OP^{2}\times\mathbb{D}^{k})$ have relative smooth normal invariant $s\,\widetilde{\eta_{1,k}} +t\,\widetilde{\eta_{2,k}} +x$, where $s,t\in\mathbb{Z}$, $\{\widetilde{\eta_{1,k}},\widetilde{\eta_{2,k}}\}$ is a $\Z$-basis given in Proposition~\ref{prop:kernel-sigma}, and $x$ belongs to the torsion subgroup of
$\Ndiff_{\partial}(\OP^{2}\times\mathbb{D}^{k})$.

Let
\[
\widehat{M}
:=
M\cup_{\partial f}(\OP^{2}\times\mathbb{D}^{k})
\]
be the closed oriented $(16+k)$-manifold obtained by gluing
$\OP^{2}\times\mathbb{D}^{k}$ to $M$ along $\partial f$, with orientation chosen such that
$f\cup\operatorname{id}$ has degree one. For a closed oriented manifold $N$, write
\[
(\mathfrak{p}_{i}\mathfrak{p}_{j})(N)
:=
\langle
p_i(TN)\smile p_j(TN),
[N]
\rangle .
\]

Then the integers $s$ and $t$ are uniquely determined by the Pontryagin numbers of $\widehat{M}$.

If $k\neq8,16$, then
\begin{align}
(\mathfrak{p}_{4}\mathfrak{p}_{m})(\widehat{M})
&=
P_{m}\,t,
\label{eq:st-P1}\\
(\mathfrak{p}_{2}\mathfrak{p}_{m+2})(\widehat{M})
&=
Q_{m}\,s+T_{m}\,t.
\label{eq:st-P2}
\end{align}
If $k=16$, the same holds with the right-hand side of equation~\eqref{eq:st-P1}
doubled and equation~\eqref{eq:st-P2} unchanged; if $k=8$, both monomials
coincide with $\mathfrak{p}_{2}\,\mathfrak{p}_{4}$ and
equations~\eqref{eq:st-P1}--\eqref{eq:st-P2} are replaced by
\begin{equation*}
\mathfrak{p}_{2}^{3}\bigl(\widehat{M}\bigr)=R_{1}\,t,
\qquad
\bigl(\mathfrak{p}_{2}\,\mathfrak{p}_{4}\bigr)\bigl(\widehat{M}\bigr)
=Q_{m}\,s+R_{2}\,t.
\end{equation*}
Here $P_{m}$, $Q_{m}$, $T_{m}$, $R_{1}$, and $R_{2}$ are the integers defined in equation~\eqref{eq:st-constants}; moreover, $P_{m}$, $Q_{m}$, and $R_{1}$ are nonzero.
\end{theorem}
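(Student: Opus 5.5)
The plan is to compute the Pontryagin classes of $\widehat{M}$ from the relative normal invariant, and then evaluate the two relevant monomials using the ring structure of $H^*(\OP^2\times\mathbb{S}^k;\Q)$.

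\textbf{Step 1: reduce to $\OP^2\times\mathbb{S}^k$.} Gluing $f$ with the identity gives a degree-one normal map $\widehat{f}=f\cup\operatorname{id}\colon\widehat{M}\to\OP^{2}\times\mathbb{S}^{k}$. Its normal invariant in $[\OP^{2}\times\mathbb{S}^{k},G/O]$ is the image of the relative normal invariant under the collapse $\OP^2\times\mathbb{S}^k\to\OP^2\times\mathbb{S}^k/(\OP^2\times *)\simeq\Sigma^k\OP^2_+$. Rationally
\[
[\Sigma^k\OP^2_+,G/O]\otimes\Q=H^{k}\oplus H^{k+8}\oplus H^{k+16}\ (\Q),
\]
since all three degrees are divisible by $4$. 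The torsion part $x$ dies rationally and contributes nothing to Pontryagin numbers. The classes $\widetilde{\eta_{1,k}}$ and $\widetilde{\eta_{2,k}}$ of Proposition~\ref{prop:kernel-sigma} are, by construction, explicit combinations of rational generators $a_0\in H^k$, $a_1\in H^{k+8}$, $a_2\in H^{k+16}$, with the $L$-genus condition (lying in $\ker\sigma^{\diff}$) imposed. I expect the paper gives these as multiples involving Bernoulli denominators coming from the image of $J$, via Adams operations.

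\textbf{Step 2: Pontryagin classes.} Write $u\in H^8(\OP^2)$ for the generator and $v\in H^k(\mathbb{S}^k)$ for the orientation class. Since $\widehat f$ is covered by a bundle map, $\widehat{f}^{*}(\nu_{\OP^2\times\mathbb{S}^k}\oplus\xi)=\nu_{\widehat M}$, where $\xi$ is the vector bundle underlying the normal invariant. Hence
\[
p(T\widehat{M})=\widehat{f}^{*}\bigl(p(T\OP^{2})\,p(\xi)^{-1}\bigr),
\qquad p(T\OP^{2})=1+6u+39u^{2}.
\]
The bundle $\xi$ is stably trivial on $\OP^2\times *$. It is pulled back from $\Sigma^k\OP^2_+$, so all its rational Pontryagin classes lie in $v\cdot H^*(\OP^2)$. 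Since $v^2=0$, $p(\xi)^{-1}=1-(p(\xi)-1)$, and the only possibly nonzero classes are $p_m(\xi)=\alpha v$, $p_{m+2}(\xi)=\beta uv$ and $p_{m+4}(\xi)=\gamma u^2v$. The coefficients $\alpha,\beta,\gamma$ are obtained from the Chern character/Pontryagin character of the $KO$-class. They are linear in $(s,t)$ through the Bernoulli-number factors $a_j\,(2j-1)!$-type constants. I would fix them so that the vanishing of the $L$-genus kills the combination corresponding to the surgery obstruction.

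\textbf{Step 3: evaluate the monomials.} From Step 2, $p_4(\widehat M)=39u^2+(\ast)u^2v$ and $p_m=-\alpha v+\dots$. Then $\mathfrak p_4\mathfrak p_m=-39\alpha$ up to sign, and $\mathfrak p_2\mathfrak p_{m+2}=-6\beta$ after simplifying with $p_2=6u+\dots$. Choosing the basis of Proposition~\ref{prop:kernel-sigma} so that $\alpha$ depends only on $t$ yields \eqref{eq:st-P1} and \eqref{eq:st-P2}, with $P_m,Q_m,T_m$ read off. The special cases arise from coincidences of indices:
\begin{itemize}
\item For $k=16$ ($m=4$), $p_m=p_4$ and the monomial becomes $p_4^2$. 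The cross term appears twice, which doubles the coefficient.
\item For $k=8$ ($m=2$), both monomials are $p_2p_4$, so I would instead use $p_2^3=3\cdot 36u^2\cdot(-\alpha v)$ to isolate $t$.
\end{itemize}
Invertibility of the resulting triangular system, i.e.\ $P_m,Q_m,R_1\neq0$, then gives uniqueness of $(s,t)$. Nonvanishing follows because the Bernoulli numbers and $39$, $6$ are nonzero.

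\textbf{Main obstacle.} The hard part is Step 2: pinning down the exact rational Pontryagin classes of the bundles representing $\widetilde{\eta_{i,k}}$. This requires tracing the lift through $[\Sigma^k\OP^2,BO]$ and the Adams-operation description of the image of $J$, including the denominators of $B_j/4j$. I would first compute $\operatorname{ph}$ of generators of $\widetilde{KO}(\Sigma^k\OP^2)$ using the known $K$-theory of $\OP^2$, and then track the kernel conditions from Proposition~\ref{prop:kernel-sigma}.
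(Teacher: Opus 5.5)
Your overall route is the paper's: pull back the total Pontryagin class along the degree-one normal map $\widehat f$, use $p(\xi)^{-1}=2-p(\xi)$ on the suspension, pair with the fundamental class, and treat $k=8,16$ via degree coincidences. Your values $\mathfrak p_4\mathfrak p_m=-39\alpha$, the doubling at $k=16$, and $\mathfrak p_2^3=-108\alpha$ at $k=8$ are correct. There are, however, two genuine gaps.

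\textbf{First gap: the coefficients are never computed.} The step you call the main obstacle is not carried out, and it is exactly where $P_m,Q_m,T_m,R_1,R_2$ come from. No further Adams-operation analysis is needed. By Proposition~\ref{prop:kernel-sigma}, $r_*\widetilde{\eta_{1,k}}=\frac{U_m}{d_m}\xi_{1,k}-\frac{A_m}{d_m}\xi_{2,k}$ and $r_*\widetilde{\eta_{2,k}}=\frac{d_m}{g_m}\xi_{\mathbb{S}^k}-\frac{S_m\lambda_m}{g_m}\xi_{1,k}-\frac{S_m\mu_m}{g_m}\xi_{2,k}$. The paper has already computed $\widetilde{\ph}(\xi_{\mathbb{S}^k})=\kappa_m j_m\iota_k$, $\widetilde{\ph}(\xi_{1,k})=\kappa_m\bigl(j_{m+2}\sigma^k u+(\frac{j_{m+2}}{240}+c_m)\sigma^k u^2\bigr)$ and $\widetilde{\ph}(\xi_{2,k})=\kappa_m j_{m+4}\sigma^k u^2$. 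This gives $\widetilde{\ph}(\eta)=e\,\iota_k+a\,\sigma^k u+b\,\sigma^k u^2$ with $e=\kappa_m j_m\frac{d_m}{g_m}t$ and $a=\kappa_m j_{m+2}\bigl(\frac{U_m}{d_m}s-\frac{S_m\lambda_m}{g_m}t\bigr)$. Since cup products vanish on a suspension, $p_s=(-1)^{s+1}(2s-1)!\,\ph_{2s}$, so your $\alpha=(-1)^{m+1}(2m-1)!\,e$ and $\beta=(-1)^{m+1}(2m+3)!\,a$. In particular, $\alpha$ depends only on $t$ not because of a choice of basis, but because $\widetilde{\eta_{1,k}}$ has no $\widetilde{\xi_{\mathbb{S}^k}}$-component. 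Nothing has to be ``fixed'' using the $L$-genus. The torsion element $x$ drops out because $r_*$ lands in the torsion-free group $[X_k,BO]$.

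\textbf{Second gap: the evaluation of $\mathfrak p_2\mathfrak p_{m+2}$ is wrong.} You claim $\mathfrak p_2\mathfrak p_{m+2}=-6\beta$. The degree-$(k+8)$ component of $(1+6u+39u^2)(1-\alpha v-\beta uv-\gamma u^2v)$ is $-(\beta+6\alpha)uv$, because of the cross term $p_2(\OP^2)\cdot p_m(\xi)$. Hence $\mathfrak p_2\mathfrak p_{m+2}=-6\beta-36\alpha=(-1)^m\bigl(6(2m+3)!\,a+36(2m-1)!\,e\bigr)$. The term you dropped is precisely the summand $36(2m-1)!\,\kappa_m j_m d_m/g_m$ of $T_m$, so your computation would produce a $T_m$ different from the one in~\eqref{eq:st-constants}.

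The same omission affects $k=8$, which you do not work out. There $p_2(\widehat M)=6u-\alpha v$ and $p_4(\widehat M)=39u^2-(\beta+6\alpha)uv$, whence $\mathfrak p_2\mathfrak p_4=-6\beta-75\alpha=30240\,a+450\,e$; this is where $R_2$ comes from. Also, $p_4(\widehat M)$ cannot contain a $u^2v$ term, since that class lies in degree $16+k$.

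The uniqueness of $(s,t)$ survives your error, because the missing terms involve only $t$ and the system stays triangular with $P_m,Q_m,R_1\neq 0$. But the explicit identities of the theorem, with $T_m$ and $R_2$ as defined, do not follow from the proposal as written.
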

Theorem~\ref{thm:st-pontryagin}, combined with the topological invariance of
the rational Pontryagin classes \cite[Theorem~1]{Novikov}, detects
homeomorphism types among manifolds homotopy equivalent to
$\OP^{2}\times\mathbb{S}^{k}$.
\begin{corollary}\label{cor:infinitely-many}
Let $k=4m$ with $m\geq1$. There is a family $\{W_{n,k}\}_{n\in\Z}$ of closed
smooth $(16+k)$-manifolds, each homotopy equivalent to
$\OP^{2}\times\mathbb{S}^{k}$, such that
\[
\frac{(\mathfrak{p}_{2}\,\mathfrak{p}_{m+2})(W_{n,k})}
     {(\mathfrak{p}_{2}\,\mathfrak{p}_{m+2})(W_{n',k})}
=\frac{n}{n'}
\]
for all $n\in\Z$ and all nonzero $n'\in\Z$. In particular,
$\{W_{n,k}\}_{n\geq1}$ is an infinite family of pairwise non-homeomorphic
manifolds.
\end{corollary}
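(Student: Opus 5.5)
We will realize the family by choosing elements of the structure set with prescribed coordinates in Theorem~\ref{thm:main-structure} and then closing them up. For each $n\in\Z$, Theorem~\ref{thm:main-structure} provides an element
\[
[(M_{n},\partial M_{n}),(f_{n},\partial f_{n})]\in\Sdiff_\partial(\OP^{2}\times\mathbb{D}^{k})
\]
with coordinates $(s,t,t_1,t_2)=(n,0,0,0)$. Its relative smooth normal invariant is therefore $n\,\widetilde{\eta_{1,k}}$. Since $\partial f_{n}$ is a diffeomorphism onto $\OP^{2}\times\mathbb{S}^{k-1}$, we may form
\[
W_{n,k}:=\widehat{M_{n}}=M_{n}\cup_{\partial f_{n}}(\OP^{2}\times\mathbb{D}^{k}).
\]
This is a closed smooth $(16+k)$-manifold. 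The map $f_{n}\cup\mathrm{id}\colon W_{n,k}\to(\OP^{2}\times\mathbb{D}^{k})\cup_{\mathrm{id}}(\OP^{2}\times\mathbb{D}^{k})=\OP^{2}\times\mathbb{S}^{k}$ is a homotopy equivalence. To see this, note that it is a map of pushouts along cofibrations which restricts to homotopy equivalences on both pieces and to a homeomorphism on the common boundary, so the gluing lemma applies.

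Next we compute the relevant Pontryagin number. Theorem~\ref{thm:st-pontryagin} applies with $s=n$, $t=0$ and $x=0$.

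For $k\neq 8$, equation~\eqref{eq:st-P2} (which is unchanged when $k=16$) gives
\[
(\mathfrak{p}_{2}\mathfrak{p}_{m+2})(W_{n,k})=Q_{m}\,n .
\]
For $k=8$ we have $m=2$ and $\mathfrak{p}_{2}\mathfrak{p}_{m+2}=\mathfrak{p}_{2}\mathfrak{p}_{4}$. The replacement formula then gives $Q_{m}\,n+R_{2}\cdot 0=Q_{m}\,n$.

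Since $Q_{m}\neq0$ by Theorem~\ref{thm:st-pontryagin}, the quotient of the values for $n$ and $n'$ is $n/n'$ whenever $n'\neq0$. This proves the displayed identity.

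Finally, we show the manifolds are pairwise non-homeomorphic. Suppose $h\colon W_{n,k}\to W_{n',k}$ is a homeomorphism with $n,n'\ge1$. By Novikov's theorem \cite[Theorem~1]{Novikov}, the rational Pontryagin classes are topological invariants, so $h^{*}p_i(W_{n',k})=p_i(W_{n,k})$ rationally. Moreover $h_{*}[W_{n,k}]=\pm[W_{n',k}]$. Hence the two Pontryagin numbers agree up to sign, which gives $Q_m n=\pm Q_m n'$ and so $|n|=|n'|$. Because $n,n'\ge1$, this forces $n=n'$.

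The only step requiring care is the homotopy-equivalence claim for the glued manifold, together with the orientation convention fixed in Theorem~\ref{thm:st-pontryagin}. The sign ambiguity coming from orientation is harmless, since we restrict to $n\ge1$ and compare absolute values.
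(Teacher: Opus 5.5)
Your proposal is correct and follows the paper's proof essentially step for step. It uses the same class with coordinates $(n,0,0,0)$ from Theorem~\ref{thm:main-structure}, the same closed manifold $W_{n,k}=M_n\cup_{\partial f_n}(\OP^2\times\mathbb{D}^k)$, the evaluation $(\mathfrak{p}_2\mathfrak{p}_{m+2})(W_{n,k})=Q_m n$ from Theorem~\ref{thm:st-pontryagin} with $(s,t)=(n,0)$ in all three cases, and Novikov's topological invariance to force $|n|=|n'|$; your only addition is the explicit gluing-lemma argument that $f_n\cup\mathrm{id}$ is a homotopy equivalence, which the paper asserts without comment.
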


\begin{theorem}
\label{thm:image-obstruction}
Let $k = 4m$ with $m \geq 1$. The relative smooth surgery obstruction map
\[
\sigma^{\mathrm{DIFF}}_{16+k}\colon
\mathcal{N}^{\mathrm{DIFF}}_{\partial}(\OP^2\times \mathbb{D}^k)
\rightarrow
L_{16+k}(\mathbb{Z})\cong\mathbb{Z}
\]
vanishes on the torsion subgroup, 
and on a $\Z$-basis $\{\widetilde{\xi_{\mathbb{S}^{k}}},
\widetilde{\xi_{1,k}},
\widetilde{\xi_{2,k}}\}$ of $\mathcal{N}^{\mathrm{DIFF}}_{\partial}
(\OP^{2}\times\mathbb{D}^{k})_{\mathrm{free}}$ constructed in Corollary~\ref{cor:free-basis}, it is given by

\begin{align*}
\sigma^{\mathrm{DIFF}}_{16+k}(\widetilde{\xi_{\mathbb{S}^{k}}})
&= S_{m} := -\tfrac{\kappa_{m}}{8}\,D_{m}\,j_{m},\\
\sigma^{\mathrm{DIFF}}_{16+k}(\widetilde{\xi_{1,k}})
&= A_{m} := -\tfrac{\kappa_{m}}{8}\Bigl[\Bigl(\tfrac{14}{15}D_{m+2}
+\tfrac{D_{m+4}}{240}\Bigr)j_{m+2}+D_{m+4}\,c_{m}\Bigr],\\
\sigma^{\mathrm{DIFF}}_{16+k}(\widetilde{\xi_{2,k}})
&= U_{m} := -\tfrac{\kappa_{m}}{8}\,D_{m+4}\,j_{m+4},
\end{align*}
where $\kappa_{m}\in\{1,2\}$ is as in~\eqref{eq:kappa-def},
$D_{s}\in\Q$ as in Lemma~\ref{lem:L-eq-D-ph},
$j_{s}\in\Z_{>0}$ as in Theorem~\ref{prop:explicit-gens}, and
$c_{m}\in\Z$ as in congruence~\eqref{eq:cm-congruence}.
Consequently the image of $\sigma^{\mathrm{DIFF}}_{16+k}$ is the
subgroup $g_{m}\mathbb{Z}$, where $g_{m}=\gcd(S_{m},A_{m},U_{m})$, and, with $v_{2}$ denoting the
$2$-adic valuation,
\[
v_2(g_{m})=
\begin{cases}
\dfrac{k}{2}-2, & k\equiv 0\pmod 8,\\[6pt]
\dfrac{k}{2}-1, & k\equiv 4\pmod 8.
\end{cases}
\]
In particular $v_{2}(g_{m})\geq 1$, so that
$g_{m}\mathbb{Z}\subseteq 2\mathbb{Z}$ and $\sigma^{\mathrm{DIFF}}_{16+k}$
is not surjective. 
\end{theorem}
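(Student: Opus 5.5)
Three steps: show the surgery obstruction kills torsion, evaluate it on the free basis by a signature formula, then compute the $2$-adic valuation of the gcd.

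\textbf{Step 1: torsion and the formula on the basis.} Since $16+k\equiv 0\pmod 4$, $L_{16+k}(\Z)\cong\Z$ is torsion free, and the obstruction is a homomorphism: by the stacking group structure, or via the splitting~\eqref{eq:N-PT-splitting} of the relative normal invariants into $[\Sigma^k\OP^2,G/O]$-type summands. Hence $\sigma^{\diff}_{16+k}$ vanishes on the torsion subgroup. On the free part it is given by the signature formula
\[
\sigma^{\diff}_{16+k}(x)=\tfrac18\Bigl\langle \mathcal{L}(\OP^2\times\mathbb{D}^k)\bigl(\mathcal{L}(x)^{-1}-1\bigr),[\OP^2\times\mathbb{D}^k,\partial]\Bigr\rangle .
\]
Here $\mathcal{L}(x)$ is the $L$-class of the stable vector bundle underlying $x$, pulled back along $G/O\to BO$. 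Only the part of the $L$-class in degrees $k$, $k+8$, $k+16$ pairs nontrivially, because $H^*(\Sigma^k\OP^2)$ is concentrated in degrees $k+8$ and $k+16$, together with the $\mathbb{S}^k$ summand in degree $k$.

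For $\widetilde{\xi_{\mathbb{S}^k}}$, which is pulled back from the top cell of $\mathbb{D}^k/\partial$, the formula reduces to the sphere case. Lemma~\ref{lem:L-eq-D-ph} expresses the $L$-class in terms of the Pontryagin character $\ph$ with coefficients $D_s$. The generator $j_m$ of the image of $J$ (Theorem~\ref{prop:explicit-gens}) then gives $S_m=-\tfrac{\kappa_m}{8}D_mj_m$, where $\kappa_m$ records the real/complex Bott factor.

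For $\widetilde{\xi_{1,k}}$ and $\widetilde{\xi_{2,k}}$, supported on the $(k+8)$- and $(k+16)$-cells, the computation has two parts:
\begin{itemize}
\item write the Pontryagin character of the basis elements in $\widetilde{KO}(\Sigma^k\OP^2)$ using the explicit generators of Theorem~\ref{prop:explicit-gens};
\item multiply by $\mathcal{L}(\OP^2)=1+\tfrac{14}{15}u\cdots$, evaluating $p_2(\OP^2)=6u$ and $p_4(\OP^2)=39u^2$ against the known $L$-genus. This produces the cross term $\tfrac{14}{15}D_{m+2}$, while the $\tfrac1{240}$ and $c_m$ contributions come from the top-cell component of $\ph$ of the mixed generator, as fixed in congruence~\eqref{eq:cm-congruence}.
\end{itemize}
Pairing with the fundamental class isolates the top-degree component and yields $A_m$ and $U_m$. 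Since the three elements form a basis, the image is $g_m\Z$ with $g_m=\gcd(S_m,A_m,U_m)$.

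\textbf{Step 2: the $2$-adic valuation.} I would compute $v_2$ of each term separately. Write $D_s$ as a multiple of $B_{2s}/(2s)!$ times a power of $2$ coming from the $L$-genus, namely $2^{2s}(2^{2s-1}-1)$. The $j_s$ are denominators of $B_{2s}/4s$ times $\kappa$-factors, so the Bernoulli denominators cancel by von Staudt--Clausen and Adams' $J$-computation. This leaves $v_2(S_m)$ equal to an explicit function of $m$ with a parity dependence through $\kappa_m$, which gives the split between $k\equiv 0$ and $k\equiv 4 \pmod 8$.

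The same analysis should give $v_2(U_m)$ and $v_2$ of the $j_{m+2}$-part of $A_m$ as larger than $v_2(S_m)$, or equal in the appropriate case. The $c_m$-term is controlled by the congruence~\eqref{eq:cm-congruence}. The minimum of the three valuations is then $k/2-2$ or $k/2-1$, and $v_2(g_m)\ge1$ is immediate for $m\ge1$; the case $k=4$ gives exactly $1$.

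\textbf{Main obstacle.} The hardest step is $A_m$. Its valuation involves a sum of terms with different powers of $2$, and the $\tfrac{1}{240}$ and $\tfrac{14}{15}$ factors carry $2$-adic denominators. So I must check that no cancellation lowers the minimum below $v_2(S_m)$; this is where $c_m$ enters. I would handle it by reducing $A_m$ modulo $2^{v_2(S_m)+1}$ using~\eqref{eq:cm-congruence}, showing that $v_2(A_m)\ge v_2(S_m)$. Then $S_m$ alone realizes the minimum, and $v_2(g_m)=v_2(S_m)$.
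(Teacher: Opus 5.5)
Your outline is the paper's route. Torsion dies because $L_{16+k}(\Z)\cong\Z$. The basis values come from the glued-signature formula, $L_s=D_s\,\ph_{2s}$ on a suspension, $\ph(\alpha_k)=\kappa_m(\sigma^ku+\tfrac1{240}\sigma^ku^2)$, $\ph(\alpha_k^2)=\kappa_m\sigma^ku^2$ and $\mathcal{L}(\OP^2)=1+\tfrac{14}{15}u+u^2$. The valuation rests on $D_sj_s=(-1)^{s+1}2^{2s+1}(2^{2s-1}-1)n_s$ with $n_s$ odd, which gives $v_2(S_m)=e_m$ and $v_2(U_m)=e_m+8$. Aiming only for $v_2(A_m)\ge v_2(S_m)$, rather than the paper's exact $v_2(A_m)=e_m+4$, is legitimate, since that is all the gcd needs.

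The $A_m$ step, however, is misdiagnosed, and one of your claims is false. Cancellation can never push a valuation below the minimum of the summands. The real danger is the opposite: the summands themselves can be too small.

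Write $A_m=-\tfrac{\kappa_m}{8}(P+Q+R)$ with $P=\tfrac{14}{15}D_{m+2}j_{m+2}$, $Q=\tfrac1{240}D_{m+4}j_{m+2}$ and $R=D_{m+4}c_m$. You need $v_2(P+Q+R)\ge 2m+1$. Here $v_2(P)=2m+6$; note $\tfrac{14}{15}$ has no $2$-adic denominator. But $v_2(Q)=2m+5-v_2(m+4)+v_2(m+2)$, and a priori only $v_2(R)\ge 2m+6-v_2(m+4)$. When $64\mid m+4$ one gets $v_2(Q)\le 2m$; for example $v_2(Q)=120=2m$ at $m=60$. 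So your assertion that the $j_{m+2}$-part of $A_m$ already has valuation at least $v_2(S_m)$ fails, and no separate bound on the $c_m$-term can repair it.

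The missing idea is to group $Q+R=D_{m+4}\,\iota$ with $\iota=\tfrac{j_{m+2}}{240}+c_m$. Congruence~\eqref{eq:cm-congruence} gives an integer $w$ with $\iota=\tfrac{j_{m+4}}{n_{m+4}}\bigl(w+\tfrac{n_{m+2}}{240}\bigr)$. This exactly cancels the denominator in $D_{m+4}=\pm2^{2m+9}(2^{2m+7}-1)\,n_{m+4}/j_{m+4}$. Hence $Q+R=\pm2^{2m+9}(2^{2m+7}-1)\bigl(w+\tfrac{n_{m+2}}{240}\bigr)$, which has valuation exactly $2m+5$ because $n_{m+2}$ is odd. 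Together with $v_2(P)=2m+6$ this gives $v_2(A_m)=e_m+4>e_m$; this is precisely the argument of Lemma~\ref{lem:2-valuation}. Your plan to reduce $A_m$ modulo a power of $2$ using the congruence works only if applied to this grouped quantity.
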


We record only the $2$-primary valuation of $g_{m}$; its behavior at odd primes is not determined in general (see the discussion following Lemma~\ref{lem:2-valuation}).
\begin{remark}
The failure of surjectivity in Theorem~\ref{thm:image-obstruction} may be
restated in terms of Wall's realization map. Writing
$\Delta\colon L_{16+k}(\mathbb{Z})\cong\mathbb{Z}\to
\mathcal{S}^{\mathrm{DIFF}}_{\partial}(\OP^2\times \mathbb{D}^{k-1})$
for the realization map in the relative smooth surgery exact sequence of
$\OP^2\times \mathbb{D}^{k-1}$, exactness gives
$\ker\Delta=\operatorname{im}\sigma^{\mathrm{DIFF}}_{16+k}=g_{m}\mathbb{Z}$. Therefore the image of $\Delta$ is a nonzero finite cyclic group of order $g_{m}$. In particular, since $g_{m}$ is even, $\Delta$ carries every odd class of
$L_{16+k}(\mathbb{Z})$ to a nontrivial element of
$\mathcal{S}^{\mathrm{DIFF}}_{\partial}(\OP^2\times \mathbb{D}^{k-1})$.
\end{remark}
A question motivated by the study of spaces of Riemannian metrics of
positive curvature is whether there exist smooth fibre bundles over
spheres with non-vanishing $\widehat{\mathfrak A}$-genus whose fibre
carries a metric of positive sectional curvature. If the fibre is
only required to carry a metric of positive \emph{scalar} curvature,
such bundles exist over base spheres of every dimension by
\cite[Corollary~1.6]{HSS14}; the construction used there, however,
yields no explicit description of the fibre \cite[p.~337]{HSS14}.
Strengthening positive scalar curvature to positive sectional or
Ricci curvature on the fibre was described as a ``very difficult
problem'' in \cite[p.~3]{botvik}. Since $\OP^{2}$ carries a metric
of positive sectional curvature, the following result produces
bundles of the desired kind.

\begin{theorem}\label{thm:Ahat-OP2-bundle}
For each $k \in \{4,\, 8,\, 12\}$ there exists a smooth oriented fibre bundle
\[
   \OP^2 \;\rightarrow\; E \;\rightarrow\; \mathbb{S}^k
\]
whose total space has non-vanishing
$\widehat{\mathfrak{A}}$-genus.
\end{theorem}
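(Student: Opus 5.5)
The bundle will come out of the identification $\Sdiff_\partial(\OP^{2}\times\mathbb{D}^{k})\cong\pi_{k}\bigl(\mathrm{hAut}(\OP^{2})/\widetilde{\operatorname{Diff}}(\OP^{2})\bigr)$ combined with Theorem~\ref{thm:main-structure}, which yields many block bundles; these will then be converted into honest smooth bundles. Fix $k\in\{4,8,12\}$, so $k=4m$ with $m\in\{1,2,3\}$. The integer $s$ in Theorem~\ref{thm:main-structure} gives a free class. We take the element $x=s\,\widetilde{\eta_{1,k}}$ with $s\neq 0$, or more generally a multiple of some rational combination, and map it along the boundary map
\[
\pi_{k}\bigl(\mathrm{hAut}(\OP^{2})/\widetilde{\operatorname{Diff}}(\OP^{2})\bigr)\to\pi_{k-1}\widetilde{\operatorname{Diff}}(\OP^{2}).
\]
Since $\pi_{k}\mathrm{hAut}(\OP^{2})$ is finite (rationally $\OP^{2}$ has very few self-maps in these degrees), a nonzero multiple of $x$ lifts. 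Clutching then gives a smooth block bundle $E\to\mathbb{S}^{k}$ with fibre $\OP^{2}$. Concretely, $E$ is the manifold $\widehat{M}$ built from $M$ and $\OP^{2}\times\mathbb{D}^{k}$, as in Theorem~\ref{thm:st-pontryagin}, where the gluing is now by a fibre homotopically trivial block diffeomorphism of $\OP^{2}\times\mathbb{S}^{k-1}$.

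Next we compute $\widehat{\mathfrak A}(E)$ from its Pontryagin numbers. Since $E$ has dimension $16+k\le 28$, $\widehat{\mathfrak A}(E)$ is a rational combination of monomials in $p_{1},\dots,p_{7}$. The cohomology of $E$ is that of $\OP^{2}\times\mathbb{S}^{k}$, with generators $u\in H^{8}$ and $v\in H^{k}$. The normal invariant determines the Pontryagin classes: $p(E)=p(\OP^{2}\times\mathbb{S}^{k})\cdot(1+a\,v+b\,uv+c\,u^{2}v)$, where $a,b,c$ are rational and linear in $(s,t,\text{the }\Theta_k\text{ part})$ through the character computations behind Proposition~\ref{prop:kernel-sigma} and Theorem~\ref{thm:st-pontryagin}. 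Here $p(\OP^{2})=1+6u+39u^{2}$. Because $v^{2}=0$, the genus is linear in $(a,b,c)$:
\[
\widehat{\mathfrak A}(E)=a\,\langle\widehat{\mathfrak A}_{4}(\OP^{2})\text{-type term}\rangle+b\,(\cdots)+c\,(\cdots).
\]
The coefficient of $a$ involves $\widehat{\mathfrak A}(\OP^{2})=0$, which vanishes by positive scalar curvature. So the genus reduces to a combination of $b$ and $c$ weighted by the degree-$(m+2)$ and degree-$m$ parts of the $\widehat{\mathfrak A}$ power series evaluated against $p(\OP^{2})$. We would compute these explicitly for $m=1,2,3$ only, which is routine bookkeeping with Bernoulli numbers.

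The main obstacle is the passage from block bundles to genuine smooth bundles, together with checking that the kernel of $\sigma^{\diff}$ produces nonzero $b$ or $c$. For the second point, we use that $\widetilde{\eta_{1,k}}$ and $\widetilde{\eta_{2,k}}$ satisfy $(\mathfrak p_{4}\mathfrak p_m)=P_m t$ and $(\mathfrak p_2\mathfrak p_{m+2})=Q_m s+\dots$ with $P_m,Q_m\neq0$, so some Pontryagin number is nonzero. We then have to check, for these three values of $k$, that the specific linear functional $\widehat{\mathfrak A}$ does not vanish on the rank-two lattice $\ker\sigma^{\diff}_{\mathrm{free}}$. This is a finite computation; it presumably fails or is unverifiable for larger $k$, which explains the restriction $k\le12$. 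For the first point, we would invoke the comparison $\pi_{i}(\widetilde{\operatorname{Diff}}/\operatorname{Diff})\otimes\Q$ in the pseudoisotopy stable range. Since $\dim\OP^{2}=16$, Igusa's stable range, roughly $k-1\le(16-7)/2$ or the improved Krannich-type range, covers degrees $3,7,11$ rationally via the Farrell--Hsiang computation. In that range $\pi_{k-1}\operatorname{Diff}(\OP^{2})\otimes\Q\to\pi_{k-1}\widetilde{\operatorname{Diff}}(\OP^{2})\otimes\Q$ is surjective, so a further multiple lifts to an actual diffeomorphism clutching. This gives a smooth bundle $E$ with the same rational Pontryagin numbers, hence $\widehat{\mathfrak A}(E)\neq0$. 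If Igusa's range is insufficient for $k=12$, we would instead use Weiss--Williams: rationally, the cokernel of $\pi_{k-1}\operatorname{Diff}\to\pi_{k-1}\widetilde{\operatorname{Diff}}$ is controlled by $K$-theory of $\Z$ and concentrated in degrees $\equiv 0\pmod4$ shifted away from $11$.
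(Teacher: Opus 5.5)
Your outline matches the paper's: a structure-set element gives a block bundle via $\mathrm{hAut}(\OP^2)/\widetilde{\operatorname{Diff}}(\OP^2)$, you compute $\widehat{\mathfrak A}(\widehat M)$ from the normal-invariant bundle data, and then compare block and smooth bundles rationally. However, neither of the two steps that carry the proof is established.

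\emph{The nonvanishing step.} You defer the nonvanishing of $\widehat{\mathfrak A}$ on $\ker\sigma^{\diff}_{16+k}$ as ``a finite computation''. The fact that $P_m,Q_m\neq0$ does not help: nonzero Pontryagin numbers say nothing about one particular linear combination of them. You also guess that this step is what forces $k\le 12$, and it is not.

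The paper proves it for every $m\ge1$. On $[\Sigma^k\OP^2,G/O]$ both $\sigma^{\diff}_{16+k}=-\frac18\bigl(\frac{14D_{m+2}}{15}a_\xi+D_{m+4}b_\xi\bigr)$ and $\widehat{\mathfrak A}(\widehat M)=\frac{\hat a_{m+2}}{240}a_\xi-\hat a_{m+4}b_\xi$ are linear forms in the Pontryagin-character coordinates, and their determinant $\Delta_k$ is nonzero. Note the relevant coefficients are $\hat a_{m+2}$ and $\hat a_{m+4}$, not $\hat a_m$.

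In your setup there is an even shorter argument, one step from what you wrote:
\begin{itemize}
\item Your observation $\widehat{\mathfrak A}(\OP^2)=0$ shows the $\pi_k(G/O)$-direction contributes nothing, so $\widehat{\mathfrak A}$ vanishes on $\widetilde{\xi_{\mathbb{S}^k}}$, while $\sigma^{\diff}_{16+k}(\widetilde{\xi_{\mathbb{S}^k}})=S_m\neq0$.
\item $\widehat{\mathfrak A}$ on $\widetilde{\xi_{2,k}}$ equals $-\hat a_{m+4}\kappa_m j_{m+4}\neq0$.
\item Hence $S_m\widetilde{\xi_{2,k}}-U_m\widetilde{\xi_{\mathbb{S}^k}}$ lies in $\ker\sigma^{\diff}_{16+k}$ and has $\widehat{\mathfrak A}=-S_m\hat a_{m+4}\kappa_m j_{m+4}\neq0$.
\end{itemize}

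\emph{The smoothing step.} As written it fails. Igusa's range for a $16$-manifold is $\min\{(16-7)/2,(16-4)/3\}=4$, which reaches degree $3$ but not $7$ or $11$. The Weiss--Williams description of $\widetilde{\operatorname{Diff}}(M)/\operatorname{Diff}(M)$ is only available in that same concordance stable range, so neither covers $k=8,12$. Your claim that the rational cokernel is ``concentrated in degrees $\equiv0\pmod 4$ shifted away from $11$'' is unsupported.

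What you need is $\pi_l\bigl(\widetilde{\operatorname{Diff}}(\OP^2)/\operatorname{Diff}(\OP^2)\bigr)\otimes\mathbb{Q}=0$ for $l\le12$. The paper imports this from \cite[Remark~2.4]{georg}, using that $\OP^2$ is $7$-connected. This extended range for highly connected manifolds, not the $\widehat{\mathfrak A}$ computation, is the source of the restriction $k\in\{4,8,12\}$.

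With it, $\pi_k\operatorname{BDiff}(\OP^2)\otimes\mathbb{Q}\to\pi_k\operatorname{B}\widetilde{\operatorname{Diff}}(\OP^2)\otimes\mathbb{Q}$ is an isomorphism. So a nonzero multiple $N$ of your block-bundle class comes from a smooth bundle $E$, and $\widehat{\mathfrak A}(E)=N\,\widehat{\mathfrak A}(\widehat M)\neq0$.

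Incidentally, no lifting is needed to send your structure-set element to $\pi_k\operatorname{B}\widetilde{\operatorname{Diff}}(\OP^2)$. The finiteness of $\pi_k\mathrm{hAut}(\OP^2)$ plays no role in the argument.
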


This answers the question above for the base spheres
$\mathbb{S}^{4}$, $\mathbb{S}^{8}$, and $\mathbb{S}^{12}$. In the case
$k=4$, the bundle is the octonionic analogue of the $\HP^{2}$-bundle
studied in \cite{KKR21}, while the cases $k=8,12$ are new.

As a further consequence, the spaces of positive sectional, Ricci, and scalar curvature metrics on $\OP^2$ each carry an element of infinite order in their homotopy groups in degrees $3$, $7$, and $11$ (see Corollary~\ref{cor:positive-curvature}).

The proof of Theorem~\ref{thm:Ahat-OP2-bundle} is given in
Section~\ref{sec:applications} and proceeds as follows. We first produce a stable vector bundle $\xi$ over the suspension
$\Sigma^{k}\OP^{2}$ admitting a lift $[\widetilde{\xi}]$ to the relative normal
invariant set $\Ndiff_{\partial}(\OP^{2} \times \mathbb{D}^{k})$ with vanishing
surgery obstruction, while the closed manifold $\widehat{M}$, obtained by
attaching $\OP^{2} \times \mathbb{D}^{k}$ along the boundary of the domain of a
normal map representing $[\widetilde{\xi}]$, has non-vanishing
$\widehat{\mathfrak{A}}$-genus. Realizing $\widehat{M}$ as the total space of a
block $\OP^{2}$-bundle over $\mathbb{S}^{k}$ produces an element of infinite
order in the $k$-th homotopy group of the classifying space of the block
diffeomorphism group of $\OP^{2}$. The results of
Section~\ref{sec:applications} then allow us to replace this block bundle by a
smooth $\OP^{2}$-bundle over $\mathbb{S}^{k}$.

\subsection*{Organization}
Section~\ref{sec:normal} computes
$\Ndiff_{\partial}(\OP^{2}\times\mathbb{D}^{k})$ from the
$KO$-theory and Adams operations of $\Sigma^{k}\OP^{2}$, and
exhibits an explicit basis of its free part.
Section~\ref{sec:obstr-free} evaluates the surgery obstruction on
this basis by means of the Hirzebruch signature theorem and the
Pontryagin character, and describes its kernel.
Section~\ref{sec:proofs} assembles these computations into the
proofs of Theorems~\ref{thm:main-structure},
\ref{thm:st-pontryagin} and~\ref{thm:image-obstruction}, and of
Corollary~\ref{cor:infinitely-many}.
Section~\ref{sec:applications} contains the applications to
diffeomorphism groups, to $\OP^{2}$-bundles with non-vanishing
$\widehat{\mathfrak{A}}$-genus, and to spaces of positively curved
metrics.
Appendix~\ref{sec:appendix1} collects the arithmetic properties of
Bernoulli numbers used in Sections~\ref{sec:normal}
and~\ref{sec:proofs}.




\section{Surgery-theoretic preliminaries and the normal invariants}\label{sec:normal}
This section sets up the surgery-theoretic framework for
$\OP^{2}\times\mathbb{D}^{k}$ and computes its relative normal
invariants. After recalling the relative structure set and the
surgery exact sequence, we determine the group
$\Ndiff_{\partial}(\OP^{2}\times\mathbb{D}^{k})$ (Theorem~\ref{thm:normal-inv-decomp}) and exhibit an explicit
$\Z$-basis of its free part (Corollary~\ref{cor:free-basis}); the evaluation of the surgery obstruction in
Section~\ref{sec:obstr-free} is carried out with respect to this
basis.
\begin{definition}
Let $X$ be a compact smooth manifold with boundary $\partial X$. The
\emph{relative smooth structure set} $\Sdiff_{\partial}(X)$ is the
pointed set of equivalence classes of homotopy equivalences of pairs
$(f,\partial f)\colon (M,\partial M)\rightarrow (X,\partial X)$,
where $M$ is a compact smooth manifold with boundary $\partial\,M$, and $\partial f$ is a diffeomorphism. Two such
representatives $(f_{1},\partial f_{1})$ and $(f_{2},\partial f_{2})$ are
equivalent if there exists a diffeomorphism
$(g,\partial g)\colon (M_{1},\partial M_{1})\to(M_{2},\partial M_{2})$
making the diagram
\[
\begin{tikzcd}[column sep=large]
(M_{1},\partial M_{1})
\arrow[r,"{(f_{1},\partial f_{1})}"]
\arrow[d,"{(g,\partial g)}"',"\cong"]
& (X,\partial X) \\
(M_{2},\partial M_{2})
\arrow[ur,"{(f_{2},\partial f_{2})}"']
&
\end{tikzcd}
\]
commute up to homotopy relative to $\partial M_{1}$. The basepoint
is represented by $\operatorname{id}\colon X\to X$. If
$\partial X=\emptyset$, we write $\Sdiff(X)$. For $X=Y\times\mathbb{D}^{k}$
with $k\geq 1$, the set $\Sdiff_{\partial}(Y\times\mathbb{D}^{k})$ is
a group, abelian for $k\geq 2$, under
stacking~\cite[\S 11.8]{LM24}.
\end{definition}

Throughout the remainder of the article, we work in the smooth
category; thus all manifolds, isomorphisms, and structure sets are
understood to be smooth unless stated otherwise. 
Since $\dim(\OP^{2}\times\mathbb{D}^{k})=16+k\ge5$, the relative surgery exact sequence \cite[Chapter~11]{LM24} yields the following exact sequence of abelian groups:

\begin{equation}\label{eq:surgery-even}
   L_{17+k}(\Z)=0 \to \Sdiff_\partial(\OP^2 \times \mathbb{D}^k)
     \xrightarrow{\;\eta_{\partial}^{\diff}\;}
     \Ndiff_\partial(\OP^2 \times \mathbb{D}^k)
     \xrightarrow{\;\sigma_{16+k}^{\diff}\;}
     L_{16 + k}(\Z) = \Z\to \cdots.
\end{equation}
Here $\Ndiff_\partial(\OP^2 \times \mathbb{D}^k)$ consists of normal
bordism classes of degree-one normal maps
$(M,\partial M)\to(\OP^{2}\times\mathbb{D}^{k},
\OP^{2}\times\mathbb{S}^{k-1})$ restricting to diffeomorphisms on the
boundary, the normal invariant map $\eta_{\partial}^{\diff}$ assigns to a
homotopy equivalence $(f,\partial f)$ its normal invariant, and, as
$16+k \equiv 0 \pmod{4}$, the surgery obstruction
$\sigma_{16+k}^{\diff}$ is one eighth of the difference of the
signatures of the two closed manifolds obtained from the source and
the target of a representing normal map by gluing
$\OP^{2}\times\mathbb{D}^{k}$ along the boundary;
see~\eqref{eq:browder-additivity}. By exactness of~\eqref{eq:surgery-even}, $\eta_{\partial}^{\diff}$ maps
$\Sdiff_\partial(\OP^2 \times \mathbb{D}^k)$ isomorphically onto
$\mathrm{Ker}(\sigma_{16+k}^{\diff})$.
The computation thus divides into two steps: the determination of the
normal invariants, carried out in the present section, and the
analysis of the surgery obstruction, carried out in
Section~\ref{sec:obstr-free}.

Recall that $G/O$ denotes the homotopy fibre of the map of classifyin
spaces $J\colon BO\to BG$ representing the spherical fibration
associated to the universal bundle over $BO$; we write
$r\colon G/O\to BO$ for the fibre inclusion. For any compact smooth manifold $X$, Sullivan's identification yields
$\Ndiff_{\partial}(X)\cong[X/\partial X,\,G/O]$ \cite[Chapter~10]{Wal99}. For the rest of the article, let
\[
X_k:=\OP^{2}\times\mathbb{D}^{k}\big/\OP^{2}\times\mathbb{S}^{k-1}\simeq \Sigma^k \OP^2\vee \mathbb{S}^k.
\]
Hence,
\begin{equation}\label{eq:N-PT-splitting}
\Ndiff_{\partial}(\OP^{2}\times\mathbb{D}^{k}) \cong [X_k, G/O]
\;\cong\;
\bigl[\Sigma^{k}\OP^{2},\,G/O\bigr]\oplus\pi_{k}(G/O).
\end{equation}

Throughout the article, we adopt the following notation and conventions.

Fix a generator $u\in H^{8}(\OP^{2};\Z)$ such that
$H^{*}(\OP^{2};\Z)=\Z[u]/(u^{3})$. The CW decomposition
$\OP^{2}=\mathbb{S}^{8}\cup_{\sigma}e^{16}$ induces the cofibre sequence
\begin{equation}\label{eq:cofibre}
\mathbb{S}^{15}\xrightarrow{\sigma}\mathbb{S}^{8}\xhookrightarrow{i}
\OP^{2}
\xrightarrow{q}\mathbb{S}^{16},
\end{equation}
where $i$ is the inclusion of the $8$-skeleton and $q$ the quotient map. Let
$\iota_k\in H^{k}(\mathbb{S}^{k};\Z)$ denote the orientation class. Identifying
$\Sigma^{k}\OP^{2}\simeq \OP^{2}\wedge\mathbb{S}^{k}$, the suspension isomorphism is
given by
\[
\sigma^{k}x=x\wedge\iota_k,
\]
and we write integral and rational cohomology classes using the same notation. Thus,
\[
\widetilde H^{*}(\Sigma^{k}\OP^{2};R)
=R\{\sigma^{k}u,\sigma^{k}u^{2}\},
\qquad R\in\{\Z,\Q\}.
\]


Reduced $KO$-groups are regarded as $KO^{*}(\mathrm{pt})$-modules via the
external product. Under the identification $\widetilde{KO}^{-k}(\mathbb{S}^{n})
\cong
KO^{-(k+n)}(\mathrm{pt})$, external products correspond to multiplication in the coefficient ring.
By \cite[p.~63]{spingeo},
\[
KO^{*}(\mathrm{pt})
=
\Z[\eta,\mu,b^{\pm1}]
/(2\eta,\eta^{3},\mu\eta,\mu^{2}-4b),
\]
where $|\eta|=-1$, $|\mu|=-4$, and $|b|=-8$. Hence, for $n\ge0$,
\begin{equation}\label{eq:KO-pt-groups}
\pi_{n}(BO)=KO^{-n}(\mathrm{pt})=
\begin{cases}
\Z\{b^{n/8}\} & n \equiv 0 \pmod{8},\\
\Z/2\{\eta\, b^{(n-1)/8}\} & n \equiv 1 \pmod{8},\\
\Z/2\{\eta^{2} b^{(n-2)/8}\} & n \equiv 2 \pmod{8},\\
\Z\{\mu\, b^{(n-4)/8}\} & n \equiv 4 \pmod{8},\\
0 & \text{otherwise}.
\end{cases}
\end{equation}
For $n\equiv0,1,2,4\pmod8$, let $\beta_n\in KO^{-n}(\mathrm{pt})$
denote the generator in~\eqref{eq:KO-pt-groups}. By suspension we use
the same notation for the corresponding generator of
$\widetilde{KO}^{0}(\mathbb{S}^{n})$; in particular,
$\beta_8=b$ and $\beta_{16}=b^2$. Since each $\beta_n$ is a monomial in
$\eta$, $\mu$, and $b$,
\begin{equation}\label{eq:beta-products}
\beta_{n}\,\beta_{8}=\beta_{n+8},
\qquad
\beta_{n}\,\beta_{16}=\beta_{n+16}
\end{equation}
whenever $KO^{-n}(\mathrm{pt})\neq 0$.

The Pontryagin character is the composite $\ph=\operatorname{ch}\circ c:KO^{0}(X)\to H^{4*}(X;\Q)$, where $c$ denotes complexification. It induces an isomorphism after tensoring with $\Q$ for every finite CW complex $X$ \cite[p.~7]{kotheory}. We write $\widetilde{\ph}$ and
$\widetilde{\operatorname{ch}}$ for the reduced Pontryagin and Chern
characters.

For $t\ge1$, let
\begin{equation}\label{eq:kappa-def}
\kappa_{t}:=
\begin{cases}
1 & \text{if } t \text{ is even},\\
2 & \text{if } t \text{ is odd},
\end{cases}
\end{equation}
where $\kappa_t$ is the degree of $c:KO^{-4t}(\mathrm{pt})\to KU^{-4t}(\mathrm{pt})$ \cite[p.~618]{Ada62}.
For $n\equiv 0,4\pmod 8$, the group in~\eqref{eq:KO-pt-groups} is
infinite cyclic, and the generator $\beta_{n}$ is therefore determined
only up to sign.
We may therefore assume that
\[
\ph(\beta_{8})=i^{*}u,
\qquad
q^{*}\ph(\beta_{16})=u^{2};
\]
indeed, complexification is an isomorphism on
$\widetilde{KO}^{0}(\mathbb{S}^{8t})$ and the Chern character maps a
generator of $\widetilde{KU}^{0}(\mathbb{S}^{8t})$ to a generator of
$H^{8t}(\mathbb{S}^{8t};\Z)$.


Let $B_{s}$ denote the $s$-th Bernoulli number in the classical
convention of \cite[p.~90]{hardy}, which is also that of
\cite[Appendix~B]{milnor}; in this convention every $B_{s}$ is positive,
and the first few values are
\[
B_{1}=\tfrac{1}{6},\qquad B_{2}=\tfrac{1}{30},\qquad
B_{3}=\tfrac{1}{42},\qquad B_{4}=\tfrac{1}{30},\quad\dots.
\]
\subsection{\texorpdfstring{$KO$}{}-theory of \texorpdfstring{$\Sigma^{k}\OP^2$}{} and Adams operations}
In this subsection, we compute $\widetilde{KO}^{-k}(\OP^{2})$ for
$k\equiv 0\pmod 4$ and for $k\equiv 1\pmod 8$
(Lemma~\ref{lem:module-basis}), and determine the corresponding
Adams operations (Propositions~\ref{prop:adams-ops}
and~\ref{prop:adams-ops-1mod8}). The first case is what the
computation of the normal invariants rests on; the second is needed only for Proposition~\ref{kerk1}, which is
used in Lemma~\ref{cok} when $k\equiv 0\pmod 8$.
\begin{lemma}\label{lem:KO0-ring}
There exists $\alpha\in\widetilde{KO}^{0}(\OP^{2})$ with
$i^{*}(\alpha)=\beta_{8}$. For every such $\alpha$,
\[\alpha^{2}=q^{*}(\beta_{16}),
\qquad
\alpha^{3}=0,
\qquad
\widetilde{KO}^{0}(\OP^{2})
=\Z\{\alpha,\alpha^{2}\},
\]
and hence $KO^{0}(\OP^{2})
\cong
\Z[\alpha]/(\alpha^{3})$ as rings.
\end{lemma}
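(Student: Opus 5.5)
The plan is to use the long exact sequence in reduced $KO$-theory for the cofibre sequence~\eqref{eq:cofibre}, and then pin down the ring structure with the Pontryagin character.

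\textbf{Step 1: the additive structure and the existence of $\alpha$.} In degree $0$ the sequence reads
\[
\widetilde{KO}^{-1}(\mathbb{S}^{8})\xrightarrow{(\Sigma\sigma)^{*}}\widetilde{KO}^{0}(\mathbb{S}^{16})\xrightarrow{q^{*}}\widetilde{KO}^{0}(\OP^{2})\xrightarrow{i^{*}}\widetilde{KO}^{0}(\mathbb{S}^{8})\xrightarrow{\sigma^{*}}\widetilde{KO}^{1}(\mathbb{S}^{16}).
\]
Here $\widetilde{KO}^{1}(\mathbb{S}^{16})=KO^{-15}(\mathrm{pt})=0$ and $\widetilde{KO}^{-1}(\mathbb{S}^{8})=KO^{-9}(\mathrm{pt})=\Z/2$. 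The map $(\Sigma\sigma)^*$ goes from a torsion group to $\Z$, so it vanishes. This gives a short exact sequence
\[
0\to\Z\{\beta_{16}\}\xrightarrow{q^{*}}\widetilde{KO}^{0}(\OP^{2})\xrightarrow{i^{*}}\Z\{\beta_{8}\}\to0 .
\]
Hence a lift $\alpha$ of $\beta_8$ exists, and $\widetilde{KO}^0(\OP^2)=\Z\{\alpha,q^*\beta_{16}\}$ for any such lift.

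\textbf{Step 2: computing $\alpha^2$ and $\alpha^3$.} For any lift $\alpha$ we have $i^*(\alpha^2)=\beta_8^2=0$, since $\widetilde{KO}^0(\mathbb{S}^8)$ has trivial products. So $\alpha^2=n\,q^*\beta_{16}$ for some integer $n$, and it remains to show $n=1$. I would apply the Pontryagin character, which is multiplicative and natural, and which is injective on this torsion-free group.

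The normalization $\ph(\beta_8)=i^*u$ gives $\ph(\alpha)=u+a u^2$ for some $a\in\Q$, because $i^*$ is an isomorphism in $H^8$. Then $\ph(\alpha^2)=u^2$. On the other side, $q^*\ph(\beta_{16})=u^2$, so $\ph(q^*\beta_{16})=u^2$. Therefore $n=1$, i.e.\ $\alpha^2=q^*\beta_{16}$.

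This also shows the basis in Step 1 is $\{\alpha,\alpha^2\}$. Finally, $\alpha^3=\alpha\cdot q^*\beta_{16}$ has Pontryagin character $u^3=0$, so injectivity of $\ph$ gives $\alpha^3=0$. The ring isomorphism $KO^0(\OP^2)\cong\Z[\alpha]/(\alpha^3)$ follows.

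\textbf{Main obstacle.} The only delicate point is the injectivity and normalization of $\ph$ in degree $16$, namely that $\ph(q^*\beta_{16})$ is exactly $u^2$ rather than a multiple of it. This is the convention fixed just before the lemma: complexification is an isomorphism on $\widetilde{KO}^0(\mathbb{S}^{8t})$, and the Chern character sends a generator of $\widetilde{KU}^0(\mathbb{S}^{8t})$ to a generator of $H^{8t}(\mathbb{S}^{8t};\Z)$. Injectivity of $\ph$ is automatic, because $\widetilde{KO}^0(\OP^2)$ is torsion-free by Step 1 and $\ph$ is a rational isomorphism.
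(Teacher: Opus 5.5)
Your proposal is correct and follows essentially the same route as the paper. The paper also reduces the cofibre long exact sequence to a split short exact sequence using the values of $KO^{*}(\mathrm{pt})$, writes $\ph(\alpha)=u+\lambda u^{2}$, and uses injectivity of $\ph$ on the torsion-free group to get $\alpha^{2}=q^{*}(\beta_{16})$ and $\alpha^{3}=0$. Your preliminary step, writing $\alpha^{2}=n\,q^{*}\beta_{16}$ via $i^{*}(\alpha^{2})=0$, is only a cosmetic variation.
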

\begin{proof}
Applying $[-, BO]$ to the cofibre sequence~\eqref{eq:cofibre} yields
\begin{equation}\label{BO}
   \pi_{9+k}(BO)
   \xrightarrow{\;(\Sigma^{k+1} \sigma)^*\;}
   \pi_{16+k}(BO)
   \xrightarrow{\;(\Sigma^k q)^*\;}
   [\Sigma^k\OP^2, BO]
   \xrightarrow{\;(\Sigma^k i)^*\;}
   \pi_{8+k}(BO)
   \xrightarrow{\;(\Sigma^k \sigma)^*\;}
   \pi_{15+k}(BO).
\end{equation}
By~\eqref{eq:KO-pt-groups}, the exact sequence~\eqref{BO} takes the form
\begin{equation}\label{eq:KO0-SES}
0 \to \widetilde{KO}^{0}(\mathbb{S}^{16})
\xrightarrow{\;q^{*}\;}
\widetilde{KO}^{0}(\OP^{2})
\xrightarrow{\;i^{*}\;}
\widetilde{KO}^{0}(\mathbb{S}^{8})
\to 0,
\end{equation}
which splits since $\widetilde{KO}^{0}(\mathbb{S}^{8})\cong\mathbb Z$ is free. Hence there exists $\alpha\in\widetilde{KO}^{0}(\OP^{2})$ with $i^{*}(\alpha)=\beta_{8}$.

Since $\widetilde{KO}^{0}(\OP^{2})$ is torsion-free by
\eqref{eq:KO0-SES} and $\ph\otimes\Q$ is an isomorphism, $\ph$ is
injective on $\widetilde{KO}^{0}(\OP^{2})$. By the naturality of
$\ph$, the identity $i^{*}(\alpha)=\beta_{8}$, and the assumption
$\ph(\beta_{8})=i^{*}u$, we obtain
\[
i^{*}\ph(\alpha)=\ph(i^{*}\alpha)=\ph(\beta_{8})=i^{*}u.
\]
Since $\ker(i^{*}\otimes \Q)=\Q\{u^{2}\}$, we have
\begin{equation}\label{eq:ph-alpha}
\ph(\alpha)=u+\lambda u^{2}
\end{equation}
for some $\lambda\in\Q$. Using the ring structure of $H^*(\OP^2;\Q)$, we get
\[
\ph(\alpha^{2})
=(u+\lambda u^{2})^{2}
=u^{2}
=\ph(q^{*}\beta_{16}),
\qquad
\ph(\alpha^{3})
=(u+\lambda u^{2})^{3}
=0.
\]
Now the injectivity of $\ph$ gives $\alpha^{2}=q^{*}(\beta_{16})$ and $\alpha^{3}=0$. Finally,
$q^{*}$ identifies
$\widetilde{KO}^{0}(\mathbb{S}^{16})$
with $\Z\{\alpha^{2}\}$, while $\beta_{8}\mapsto\alpha$ defines a
section of $i^{*}$ in~\eqref{eq:KO0-SES}. Thus $\widetilde{KO}^{0}(\OP^{2}) \cong \Z\{\alpha\}\oplus\Z\{\alpha^{2}\}$,
completing the proof.

\end{proof}
\begin{remark}
An explicit class $\alpha$ satisfying the hypothesis of Lemma~\ref{lem:KO0-ring} is
$16-[T\OP^{2}]$, the reduced class of the stable normal bundle of $\OP^{2}$.
Indeed, $p_{1}(\OP^{2})$ vanishes for degree reasons, while
$p_{2}(\OP^{2})=6u$ by \cite[Theorem~19.4]{borel}. For a real bundle $\eta$ with $p(\eta)=\prod_{i}(1+x_{i}^{2})$, the complexification
$\eta\otimes_{\mathbb{R}}\mathbb{C}$ has Chern roots $\pm x_{i}$. Hence $\ph(\eta)=\sum_{i}\bigl(e^{x_{i}}+e^{-x_{i}}\bigr)=\sum_{i}2\cosh x_{i}$, whose degree-$8$ component is $\tfrac{1}{12}\sum_{i}x_{i}^{4}=\tfrac{1}{12}\bigl(p_{1}^{2}-2p_{2}\bigr)$.
The degree-$8$ component of $\ph(T\OP^{2})$ is therefore $-u$, and hence $\widetilde{\ph}\bigl(16-[T\OP^{2}]\bigr)=u+c\,u^{2}$ for some $c\in\Q$. Using the naturality of $\ph$ together with $i^{*}(u^{2})=0$ and $\ph(\beta_{8})=i^{*}u$, we obtain
\[
\ph\Bigl(i^{*}\bigl(16-[T\OP^{2}]\bigr)\Bigr)
=i^{*}\,\widetilde{\ph}\bigl(16-[T\OP^{2}]\bigr)
=i^{*}u=\ph(\beta_{8}).
\]
Since $\widetilde{KO}^{0}(\mathbb{S}^{8})\cong\Z$ is torsion-free and
$\ph\otimes\Q$ is an isomorphism \cite[p.~7]{kotheory}, the map $\ph$ is
injective on $\widetilde{KO}^{0}(\mathbb{S}^{8})$, whence
$i^{*}\bigl(16-[T\OP^{2}]\bigr)=\beta_{8}$.
\end{remark}
For $k \geq 1$ with $KO^{-k}(\mathrm{pt}) \neq 0$ and $\alpha$ as in
Lemma~\ref{lem:KO0-ring}, define
\begin{equation*}
\alpha_{k} := \beta_{k} \cdot \alpha,
\qquad
\alpha_{k}^{2} := \beta_{k} \cdot \alpha^{2}
\;\in\; \widetilde{KO}^{-k}(\OP^{2}) \cong [\Sigma^{k}\OP^{2}, BO],
\end{equation*}
the products being taken with respect to the
$KO^{*}(\mathrm{pt})$-module structure. Since $\alpha^{2}=q^{*}(\beta_{16})$ does not depend on the choice
of $\alpha$, neither does $\alpha_{k}^{2}$; the class $\alpha_{k}$, however,
does.

\begin{lemma}\label{lem:module-basis}\label{lem:KO-OP2}\label{1mod8}
Let $k \geq 1$.
\begin{enumerate}
\item[(a)] If $k \equiv 0 \pmod{4}$, then
$[\Sigma^{k}\OP^{2}, BO] = \Z\{\alpha_{k}\} \oplus \Z\{\alpha_{k}^{2}\}$.
\item[(b)] If $k \equiv 1 \pmod{8}$, then
$[\Sigma^{k}\OP^{2}, BO] = \Z/2\{\alpha_{k}\} \oplus \Z/2\{\alpha_{k}^{2}\}$.
\end{enumerate}
\end{lemma}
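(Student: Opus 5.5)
We apply the exact sequence~\eqref{BO} for the given $k$ and identify the two end groups with the classes $\beta_{k}\cdot q^{*}(\beta_{16})=\alpha_{k}^{2}$ and $\beta_{k}\cdot\alpha=\alpha_{k}$.

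\emph{Case (a).} If $k\equiv 0\pmod 4$, then $9+k$ and $15+k$ are congruent to $1,5$ and $7,3\pmod 8$. When $k\equiv0\pmod8$ we have $9+k\equiv1$ and $15+k\equiv7\pmod 8$; when $k\equiv4\pmod8$ we have $9+k\equiv5$ and $15+k\equiv3\pmod 8$. In the three cases $5,7,3$ the group in~\eqref{eq:KO-pt-groups} vanishes, so only $\pi_{9+k}(BO)\cong\Z/2$ for $k\equiv0\pmod 8$ needs attention. This term maps into $\pi_{16+k}(BO)\cong\Z$, which is torsion-free, so the map is zero. Hence~\eqref{BO} reduces to a short exact sequence
\[0\to\pi_{16+k}(BO)\to[\Sigma^k\OP^2,BO]\to\pi_{8+k}(BO)\to0\]
of infinite cyclic groups, and it splits. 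The generator of the left group is $\beta_{16+k}=\beta_k\beta_{16}$ by~\eqref{eq:beta-products}. Since $(\Sigma^kq)^*$ is the $KO^*(\mathrm{pt})$-linear map $q^*$, and $q^*$ on $\widetilde{KO}^{-k}$ is module-linear, this generator maps to $\beta_k\cdot q^*\beta_{16}=\beta_k\alpha^2=\alpha_k^2$ by Lemma~\ref{lem:KO0-ring}. Likewise $i^*(\alpha_k)=\beta_k\, i^*\alpha=\beta_k\beta_8=\beta_{k+8}$, which generates $\pi_{8+k}(BO)$. Therefore $\alpha_k$ splits the sequence, and $\{\alpha_k,\alpha_k^2\}$ is a $\Z$-basis.

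\emph{Case (b).} If $k\equiv1\pmod8$, the groups are $\pi_{9+k}=\pi_{\equiv2}\cong\Z/2$, $\pi_{16+k}\cong\Z/2\{\beta_{16+k}\}$, $\pi_{8+k}\cong\Z/2\{\beta_{8+k}\}$, and $\pi_{15+k}=\pi_{\equiv0}\cong\Z$. The last map $(\Sigma^k\sigma)^*$ goes from a torsion group to $\Z$, so it is zero and $i^*$ is surjective. The same module computation shows $i^*\alpha_k=\beta_{k+8}$ and that $q^*$ sends $\beta_{k+16}$ to $\alpha_k^2$.

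The two remaining points are where the real work lies.

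First, we must show that $q^*$ is injective, that is, that $(\Sigma^{k+1}\sigma)^*\colon\pi_{9+k}(BO)\to\pi_{16+k}(BO)$ vanishes. This map is multiplication by the image of $\sigma$ in $KO$-theory, viewed as an element of degree $-7$ in $\pi_*^s$ acting on $KO^*(\mathrm{pt})$. Via the $d$/$e$-invariant, or the unit map $\pi^s_*\to KO_*$, the stable class of $\sigma$ in $\pi_7^s$ has zero Hurewicz image in $KO_7=0$. Hence the induced map on $KO$-cohomology of spheres, which is multiplication by the $KO$-Hurewicz image of $\sigma$, is zero. This is where I expect the main obstacle: the claim that the induced map of a stable map $\mathbb{S}^{n+7}\to\mathbb{S}^n$ on $\widetilde{KO}^*$ is multiplication by its image in $KO_7(\mathrm{pt})=0$, which holds because $\widetilde{KO}^*(\mathbb{S}^n)$ is free of rank one over $KO^*(\mathrm{pt})$ and the map is a module map. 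The same argument handles case (a) uniformly.

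Second, we must show that the resulting extension of $\Z/2$ by $\Z/2$ splits. Since $2\alpha_k=(2\beta_k)\alpha=0$ because $2\eta=0$, the lift $\alpha_k$ has order $2$, so the extension splits and the group is $\Z/2\{\alpha_k\}\oplus\Z/2\{\alpha_k^2\}$.
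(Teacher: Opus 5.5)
Your proposal is correct and follows the paper's proof: you use the same exact sequence~\eqref{BO}, the same identifications $q^*(\beta_{k+16})=\alpha_k^2$ and $i^*(\alpha_k)=\beta_{k+8}$ via $KO^*(\mathrm{pt})$-linearity, and the same splitting via $2\alpha_k=0$ in case (b). The only difference is in case (b), where the paper cites Adams' Proposition~7.1 for the vanishing of both $\sigma^*$ maps; you instead prove it directly, using torsion-into-$\Z$ for one map and, for $(\Sigma^{k+1}\sigma)^*$, module-linearity together with $KO^{-7}(\mathrm{pt})=0$, which is a valid self-contained substitute.
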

\begin{proof}
$KO^{*}(\mathrm{pt})$-linearity of $i^{*}$ and $q^{*}$,
Lemma~\ref{lem:KO0-ring}, and~\eqref{eq:beta-products} give
\[
i^{*}(\alpha_{k}) = \beta_{k} \cdot i^{*}(\alpha) = \beta_{k}\beta_{8}
= \beta_{k+8},
\qquad
\alpha_{k}^{2} = \beta_{k} \cdot q^{*}(\beta_{16})
= q^{*}(\beta_{k}\beta_{16}) = q^{*}(\beta_{k+16}).
\]
Suppose $k \equiv 0 \pmod{4}$. Then by~\eqref{eq:KO-pt-groups}, we have $\pi_{8+k}(BO)\cong\pi_{16+k}(BO)\cong\Z$ and
$\pi_{15+k}(BO)=0$. Moreover, note that for $k\equiv 0\pmod{4}$, $\pi_{9+k}(BO)$ is either trivial or contains $2$-torsion. This reduces the exact sequence~\eqref{BO}  to the following short exact sequence
\[
   0 \to \pi_{16+k}(BO)\cong \Z
   \xrightarrow{\;(\Sigma^k q)^*\;}
   [\Sigma^k\OP^2, BO]
   \xrightarrow{\;(\Sigma^k i)^*\;}
   \pi_{8+k}(BO)\cong\Z \to 0.
\]

Suppose $k\equiv1\pmod8$. Since $\sigma^{*}\colon\pi_{9+k}(BO)\to\pi_{16+k}(BO)$ and $\sigma^{*}\colon\pi_{8+k}(BO)\to\pi_{15+k}(BO)$ are trivial by \cite[Proposition~7.1]{Ada66}, and
$\pi_{8+k}(BO)\cong\pi_{16+k}(BO)\cong\Z/2$, the exact sequence
\eqref{BO} reduces to
\begin{equation*}
0\rightarrow
\pi_{16+k}(BO)\cong \Z/2
\xrightarrow{\,(\Sigma^kq)^{*}\,}
[\Sigma^{k}\OP^{2},BO]
\xrightarrow{\,(\Sigma^k i)^{*}\,}
\pi_{8+k}(BO)\cong \Z/2
\rightarrow0.
\end{equation*}
In either case, $(\Sigma^{k}q)^{*}$ is injective with image
$\Z\{\alpha_{k}^{2}\}$, where
$\alpha_{k}^{2}=(\Sigma^{k}q)^{*}(\beta_{k+16})$, and
$(\Sigma^{k}i)^{*}(\alpha_{k})=\beta_{k+8}$. Hence
$\beta_{k+8}\mapsto\alpha_{k}$ defines a section
$s$ of $(\Sigma^{k}i)^{*}$: for $k\equiv0\pmod4$ this is immediate,
while for $k\equiv1\pmod8$ it follows from
$2\alpha_{k}=(2\beta_{k})\cdot\alpha=0$. Therefore
$[\Sigma^{k}\OP^{2},BO]
=
\Z\{\alpha_{k}^{2}\}\oplus\Z\{\alpha_{k}\}$, where each summand is infinite cyclic, if $k\equiv0\pmod4$ and cyclic of order $2$, if $k\equiv1\pmod8$.

\end{proof}
\begin{remark}
The notation $\alpha_k^2$ is purely symbolic: it denotes the external
product $\beta_k\cdot\alpha^2$, not the square of $\alpha_k$. Indeed,
$(\alpha_k)^2=0$, since the reduced diagonal of a suspension is
null-homotopic.
\end{remark}
Consider the complexification
$c \colon \widetilde{KO}^{0}(\OP^{2}) \to \widetilde{KU}^{0}(\OP^{2})$
and set $\beta := c(\alpha)$. By~\eqref{eq:ph-alpha},
\[
\ch(\beta) = u + \lambda u^{2},
\qquad
\ch(\beta^{2}) = (u + \lambda u^{2})^{2} = u^{2},
\]
where $\lambda \in \Q$ depends on the choice of $\alpha$. The class of $\lambda$ in $\mathbb{Q}/\mathbb{Z}$ is equal to that of $\frac{1}{m(4)}$ \cite[Proposition~7.9]{Ada66}, where $m(4)$, which equals $240$, denotes the denominator of the fraction $\frac{B_{2}}{8}$ (see \cite[Theorem~2.6]{Ada65a}). Write $\lambda = \frac{1}{240} + n$
with $n \in \Z$. The class $\alpha - n\alpha^{2}$ satisfies
$i^{*}(\alpha - n\alpha^{2}) = \beta_{8}$, since
$i^{*}(\alpha^{2}) = 0$, and
$(\alpha - n\alpha^{2})^{2} = \alpha^{2}$, since $\alpha^{3} = 0$;
hence Lemma~\ref{lem:KO0-ring} and Lemma~\ref{lem:module-basis}
apply verbatim to $\alpha - n\alpha^{2}$, while
$\ch\bigl(c(\alpha - n\alpha^{2})\bigr)
= u + \lambda u^{2} - n u^{2} = u + \tfrac{1}{240}\,u^{2}$.
Replacing $\alpha$ by $\alpha - n\alpha^{2}$, we may therefore
assume
\begin{equation}\label{eqn: ch at beta}
\ch(\beta)=u+\frac{1}{240}\,u^{2}.
\end{equation}
This normalization of $\alpha$ is fixed for the remainder of the
paper; in particular, the classes $\alpha_{k}$ and $\alpha_{k}^{2}$
of Lemma~\ref{lem:module-basis} are henceforth understood with
respect to this choice.

Let $k=4m$ with $m\ge1$. The following proposition determines the Adams operations on the generators $\alpha_k$ and $\alpha_k^2$ of
$[\Sigma^k\OP^2,BO]$.
\begin{proposition}\label{prop:adams-ops}
Let $k = 4m$ for some integer $m \geq 1$. Then, for every integer $j \geq 1$, the Adams operation
\[
\Psi^j \colon \widetilde{KO}^{-k}(\OP^2) \rightarrow \widetilde{KO}^{-k}(\OP^2)
\]
satisfies
\begin{equation}\label{eq:adams-suspended}
\begin{aligned}
   \Psi^j(\alpha_k)
   &= j^{4 + k/2}\, \alpha_k
     + \frac{j^{4 + k/2}(j^4 - 1)}{240}\, \alpha^2_k, \\
   \Psi^j(\alpha^2_k)
   &= j^{8 + k/2}\, \alpha^2_k.
\end{aligned}
\end{equation}
\end{proposition}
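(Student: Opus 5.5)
The plan is to work rationally through the Pontryagin character, where Adams operations act diagonally. Identify $\widetilde{KO}^{-k}(\OP^2)\cong\widetilde{KO}^{0}(\Sigma^k\OP^2)$, and let $\Psi^j$ act as the usual Adams operation on $\widetilde{KO}^0(\Sigma^k\OP^2)$. By Lemma~\ref{lem:module-basis}(a) this group is free abelian on $\alpha_k,\alpha_k^2$. Since $\ph\otimes\Q$ is an isomorphism, $\ph$ is injective on it. So it suffices to verify both identities in \eqref{eq:adams-suspended} after applying $\ph$.

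\emph{Step 1: compute $\ph(\alpha_k)$ and $\ph(\alpha_k^2)$.} Complexification and the Chern character are both multiplicative for external products. This gives
\[
\ph(\beta_k\cdot x)=\ch(c(\beta_k))\cdot\ch(c(x)).
\]
The map $c\colon\widetilde{KO}^0(\mathbb{S}^k)\to\widetilde{KU}^0(\mathbb{S}^k)$ has degree $\kappa_m$, and $\ch$ carries a generator of $\widetilde{KU}^0(\mathbb{S}^k)$ to $\pm\iota_k$. Hence $\ch(c(\beta_k))=\epsilon\kappa_m\iota_k$ for a sign $\epsilon$. Combining this with \eqref{eqn: ch at beta} and $\ch(\beta^2)=u^2$ yields
\[
\ph(\alpha_k)=\epsilon\kappa_m\Bigl(\sigma^k u+\tfrac{1}{240}\sigma^k u^2\Bigr),\qquad \ph(\alpha_k^2)=\epsilon\kappa_m\,\sigma^k u^2 .
\]
The constant $\epsilon\kappa_m$ is common to both classes. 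It will cancel, so its precise value never matters.

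\emph{Step 2: apply $\Psi^j$.} Adams operations commute with complexification. On $\widetilde{KU}^0$ of a finite complex they satisfy $\ch_r(\Psi^j y)=j^r\ch_r(y)$, where $\ch_r$ is the component in $H^{2r}$. The class $\sigma^k u$ has degree $8+k$, so it is scaled by $j^{4+k/2}$. The class $\sigma^k u^2$ has degree $16+k$, so it is scaled by $j^{8+k/2}$. Therefore
\[
\ph(\Psi^j\alpha_k)=\epsilon\kappa_m\Bigl(j^{4+k/2}\sigma^k u+\tfrac{j^{8+k/2}}{240}\sigma^k u^2\Bigr)
= j^{4+k/2}\ph(\alpha_k)+\tfrac{j^{8+k/2}-j^{4+k/2}}{240}\ph(\alpha_k^2),
\]
and $\ph(\Psi^j\alpha_k^2)=j^{8+k/2}\ph(\alpha_k^2)$. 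Since $j^{8+k/2}-j^{4+k/2}=j^{4+k/2}(j^4-1)$, injectivity of $\ph$ gives \eqref{eq:adams-suspended}. Integrality of the coefficient $j^{4+k/2}(j^4-1)/240$ is then automatic, because $\Psi^j(\alpha_k)$ lies in the lattice $\Z\{\alpha_k,\alpha_k^2\}$. One can also check it directly: $240\mid j^4(j^4-1)$.

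\emph{Main obstacle.} The only delicate point is the bookkeeping in Step~1. One must make sure that $\Psi^j$ on $\widetilde{KO}^{-k}$ is the operation on $\widetilde{KO}^0(\Sigma^k\OP^2)$, and not $\beta_k\cdot\Psi^j(\cdot)$; the periodicity generator is not $\Psi^j$-invariant. One must also justify that the Chern character converts the external product with $\beta_k$ into the cohomology suspension $\sigma^k$, up to the common factor $\epsilon\kappa_m$. I would handle this by writing $\alpha_k$ as the pullback of the external product $\beta_k\times\alpha$ on $\mathbb{S}^k\wedge\OP^2$ and using naturality and multiplicativity of $c$ and $\ch$. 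Once this is in place, the remaining argument is the degree-by-degree scaling above.
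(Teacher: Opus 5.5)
Your argument is correct, and it takes a different route from the paper.

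\textbf{The paper's route.} The paper stays entirely in $K$-theory. It quotes Adams' formulas $\Psi^j(\alpha)=j^4\alpha+\tfrac{j^8-j^4}{240}\alpha^2$ and $\Psi^j(\alpha^2)=j^8\alpha^2$ on the unsuspended group $\widetilde{KO}^0(\OP^2)$ \cite[Proposition~7.5]{Ada66}. It combines these with $\Psi^j(\beta_k)=j^{k/2}\beta_k$ on $\widetilde{KO}^0(\mathbb{S}^k)$ \cite[Corollary~5.2]{Ada62}. The result then follows because $\Psi^j$ is multiplicative for the external product $\beta_k\cdot(-)$.

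\textbf{Your route.} You work directly on $\Sigma^k\OP^2$ and use four ingredients:
\begin{itemize}
\item the identity $c\circ\Psi^j=\Psi^j\circ c$;
\item the eigenvalue property $\ch_r\circ\Psi^j=j^r\ch_r$;
\item the normalization $\ch(\beta)=u+\tfrac{1}{240}u^2$;
\item injectivity of $\ph\otimes\Q$.
\end{itemize}
In effect you re-derive Adams' formula in suspended form.

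\textbf{What your approach buys.} It is self-contained. It shows explicitly that the coefficient $\tfrac{1}{240}$ comes from the normalization of $\alpha$ fixed just before the proposition. The unknown factor $\pm\kappa_m$ cancels, so no sign bookkeeping is needed. Integrality of $j^{4+k/2}(j^4-1)/240$ also falls out of the lattice argument, instead of requiring the separate check in Remark~\ref{rem:240-divisibility}.

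\textbf{What the paper's approach buys.} Beyond brevity, it gives a template that still works where rational methods see nothing. The same multiplicativity argument, with $\Psi^j(\eta)$ in place of $\Psi^j(\beta_k)$, computes the Adams operations for $k\equiv 1\pmod 8$ in Proposition~\ref{prop:adams-ops-1mod8}. There $\widetilde{KO}^{-k}(\OP^2)\cong\Z/2\oplus\Z/2$ and the Pontryagin character is identically zero, so your method could not be used.
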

\begin{remark}\label{rem:240-divisibility}
For every integer $j$, $240\mid j^{4}(j^{4}-1)$. Indeed, $3$ and $5$ divide $j^{4}(j^{4}-1)$ by Fermat's little theorem,
while $16\mid j^{4}$ if $j$ is even and $16\mid(j^{4}-1)$ if $j$ is odd,
since $j^{2}\equiv1\pmod8$. Hence the coefficient
$\frac{j^{4+k/2}(j^{4}-1)}{240}$ in~\eqref{eq:adams-suspended} is an
integer. If $j$ is odd, then $j^{4}$ is odd, so $\frac{j^{4}(j^{4}-1)}{240}$ is even if and only if $32\mid(j^{4}-1)$. Now
$j^{2}\equiv1\pmod{16}$ for $j\equiv\pm1\pmod8$, whereas
$j^{2}\equiv9\pmod{16}$ for $j\equiv\pm3\pmod8$. Hence
$j^{4}\equiv1\pmod{32}$ in the former case and
$j^{4}\equiv17\pmod{32}$ in the latter, so
$j^{4}(j^{4}-1)/240$ is even for $j\equiv\pm1\pmod8$ and odd for
$j\equiv\pm3\pmod8$.
\end{remark}
\begin{proof}[Proof of Proposition~\ref{prop:adams-ops}]
We note from \cite[Proposition~7.5]{Ada66} that
\[
\Psi^{j}(\alpha)=j^{4}\alpha+ \frac{j^8-j^4}{240}\,\alpha^{2}
\qquad\text{and}\qquad
\Psi^j(\alpha^{2}) = j^8\alpha^{2}.
\]
Since $\Psi^j$ is a ring endomorphism of $KO^*(-)$
\cite[Theorem~5.1]{Ada62} and
$\Psi^j(\beta_k) = j^{k/2}\beta_k$
\cite[Corollary~5.2]{Ada62},
\begin{align*}
   \Psi^j(\alpha_k)
   &= \Psi^j(\beta_k) \cdot \Psi^j(\alpha)
    = j^{k/2}\beta_k \cdot \bigl(j^4\alpha + \frac{j^8-j^4}{240}\,\alpha^{2}\bigr)
  = j^{4+k/2}\alpha_k + j^{k/2}\frac{j^8-j^4}{240}\,\alpha^2_k.
\end{align*}
Therefore $\Psi^j(\alpha_k)=j^{4+k/2}\alpha_k+ \frac{j^{4+k/2}(j^4-1)}{240}\alpha_k^2$.

Similarly, we have $\Psi^j(\alpha_k^2)= j^{k/2}\beta_k \cdot j^8\alpha^{2}
= j^{8+k/2}\alpha^2_k$.
\end{proof}
We now compute the Adams operations on the
generators $\alpha_{k}$ and $\alpha_{k}^{2}$ of $[\Sigma^k \OP^2, BO]$, if $k = 8\ell + 1$ with $\ell \geq 0$. 

\begin{proposition}\label{prop:adams-ops-1mod8}
For \(k=8\ell+1\) with \(\ell\geq 0\), the Adams operations on $\widetilde{KO}^{-k}(\mathbb{O}P^2)$ are given as follows.
\begin{itemize}
\item[(a)] If \(j\) is odd, then $\Psi^j(\alpha_k)=\alpha_k+\delta(j)\alpha_k^2,~
\Psi^j(\alpha_k^2)=\alpha_k^2$,
where
\[
\delta(j):=\frac{j^4(j^4-1)}{240}\pmod{2}
=
\begin{cases}
0 & \text{if } j\equiv \pm1 \pmod 8,\\
1 & \text{if } j\equiv \pm3 \pmod 8.
\end{cases}
\]

\item[(b)] If \(j\) is even, then $\Psi^j(\alpha_k)=\Psi^j(\alpha_k^2)=0$.
\end{itemize}
\end{proposition}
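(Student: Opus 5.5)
The plan is to argue exactly as in the proof of Proposition~\ref{prop:adams-ops}, using multiplicativity of $\Psi^{j}$ with respect to the external $KO^{*}(\mathrm{pt})$-module structure. For $k=8\ell+1$ we have $\beta_{k}=\eta b^{\ell}$, with $\alpha_{k}=\beta_{k}\cdot\alpha$ and $\alpha_{k}^{2}=\beta_{k}\cdot\alpha^{2}$. We then have
\[
\Psi^{j}(\alpha_{k})=\Psi^{j}(\beta_{k})\cdot\Psi^{j}(\alpha),
\qquad
\Psi^{j}(\alpha_{k}^{2})=\Psi^{j}(\beta_{k})\cdot\Psi^{j}(\alpha^{2}),
\]
where $\Psi^{j}(\alpha)=j^{4}\alpha+\frac{j^{8}-j^{4}}{240}\alpha^{2}$ and $\Psi^{j}(\alpha^{2})=j^{8}\alpha^{2}$ by \cite[Proposition~7.5]{Ada66}, as quoted in the earlier proof. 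Everything therefore reduces to computing $\Psi^{j}(\beta_{k})\in KO^{-k}(\mathrm{pt})\cong\Z/2$.

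The key input, and the only step needing care, is the value of $\Psi^{j}$ on the generator $\eta b^{\ell}$ of $\widetilde{KO}^{0}(\mathbb{S}^{8\ell+1})$. I would invoke \cite[Theorem~5.1 and Corollary~5.2]{Ada62}, which states that on $\widetilde{KO}(\mathbb{S}^{8\ell+1})\cong\Z/2$ the operation $\Psi^{j}$ is the identity for $j$ odd and zero for $j$ even. A backup argument is available. Multiplicativity gives $\Psi^{j}(\eta b^{\ell})=\Psi^{j}(\eta)\Psi^{j}(b)^{\ell}=j^{4\ell}\Psi^{j}(\eta)$. Moreover $\eta$ is the reduced class of the Hopf line bundle on $\mathbb{S}^{1}=\R P^{1}$, and $\Psi^{j}$ of a real line bundle is its $j$-th tensor power. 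Hence $\Psi^{j}(\eta)=\eta$ for $j$ odd and $\Psi^{j}(\eta)=0$ for $j$ even. Reducing mod $2$, this gives $\Psi^{j}(\beta_{k})=j^{4\ell}\beta_{k}=\beta_{k}$ for $j$ odd and $0$ for $j$ even. This degree-shifting compatibility, ensuring that the external product is respected by $\Psi^{j}$, is the main point to get right, and it is handled by the cited theorem of Adams.

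Part (b) is then immediate, since $\Psi^{j}(\beta_{k})=0$ kills both products. For part (a), substituting gives
\[
\Psi^{j}(\alpha_{k})=j^{4}\alpha_{k}+\frac{j^{4}(j^{4}-1)}{240}\alpha_{k}^{2},
\qquad
\Psi^{j}(\alpha_{k}^{2})=j^{8}\alpha_{k}^{2}.
\]
By Lemma~\ref{lem:module-basis}(b) both $\alpha_{k}$ and $\alpha_{k}^{2}$ have order $2$, so the coefficients may be reduced mod $2$; since $j$ is odd, $j^{4}\equiv j^{8}\equiv1$. The integer $\frac{j^{4}(j^{4}-1)}{240}$ reduces mod $2$ to $\delta(j)$. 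Its parity was already computed in Remark~\ref{rem:240-divisibility}: it is even for $j\equiv\pm1\pmod 8$ and odd for $j\equiv\pm3\pmod 8$. This yields the stated formula.
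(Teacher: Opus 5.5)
Your proposal is correct and follows the paper's proof essentially step for step. The paper computes $\Psi^{j}(\beta_{k})$ from $\Psi^{j}(b)=j^{4}b$ and from representing $\eta$ by the M\"obius line bundle, exactly as in your backup argument, which is the paper's main argument. It then applies multiplicativity with \cite[Proposition~7.5]{Ada66} and reduces the coefficients mod $2$ using Remark~\ref{rem:240-divisibility}, as you do.
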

\begin{proof}
Since $\Psi^{j}(b)=j^{4}b$ \cite[Corollary~5.2]{Ada62}, we have
$\Psi^{j}(b^{\ell})=j^{4\ell}b^{\ell}$. Let $L\to \mathbb{S}^{1}$ be the
M\"obius line bundle and set
$\eta:=L-\underline{\mathbb{R}}\in\widetilde{KO}^{0}(\mathbb{S}^{1})
\cong\Z/2$, representing the generator $\eta$ of
\eqref{eq:KO-pt-groups} \cite{hatcher2003}.
By \cite[Theorem~5.1(iii)]{Ada62} we have $\Psi^{j}(L)=L^{\otimes j}$,
and since $L^{\otimes 2}\cong\underline{\mathbb{R}}$, this gives
$L^{\otimes j}=L$ for $j$ odd and
$L^{\otimes j}=\underline{\mathbb{R}}$ for $j$ even. Hence
\[
\Psi^{j}(\eta)=
\begin{cases}
\eta, & j \text{ odd},\\
0, & j \text{ even}.
\end{cases}
\]
Since $\beta_{k}=\eta\, b^{\ell}$ and the Adams operations are
multiplicative with respect to the external product,
\[
\Psi^{j}(\beta_{k})=\Psi^{j}(\eta)\cdot j^{4\ell}b^{\ell}
=
\begin{cases}
\beta_{k}, & j \text{ odd},\\
0, & j \text{ even},
\end{cases}
\]
where in the odd case we used that $j^{4\ell}$ is odd and $\beta_{k}$
has order $2$.

If $j$ is even, then $\Psi^{j}(\beta_{k})=0$, and hence
$\Psi^{j}(\alpha_{k})=\Psi^{j}(\alpha_{k}^{2})=0$ by multiplicativity;
this proves (b).

Suppose $j$ is odd. Using \cite[Proposition~7.5]{Ada66} and
multiplicativity again, we obtain
\begin{align*}
\Psi^{j}(\alpha_{k})
&=\Psi^{j}(\beta_{k})\cdot\Psi^{j}(\alpha)
=\beta_{k}\Bigl(j^{4}\alpha+\tfrac{j^{4}(j^{4}-1)}{240}\,\alpha^{2}\Bigr)
=j^{4}\alpha_{k}+\tfrac{j^{4}(j^{4}-1)}{240}\,\alpha_{k}^{2},\\
\Psi^{j}(\alpha_{k}^{2})
&=\Psi^{j}(\beta_{k})\cdot\Psi^{j}(\alpha^{2})
=j^{8}\alpha_{k}^{2}.
\end{align*}
Since $\alpha_{k}$ and $\alpha_{k}^{2}$ have order $2$ and $j^{4}$,
$j^{8}$ are odd, these formulas reduce to
\[
\Psi^{j}(\alpha_{k})=\alpha_{k}+\delta(j)\,\alpha_{k}^{2},
\qquad
\Psi^{j}(\alpha_{k}^{2})=\alpha_{k}^{2},
\]
where $\delta(j)\in\{0,1\}$ is the residue of $j^{4}(j^{4}-1)/240$
modulo $2$; by Remark~\ref{rem:240-divisibility} this quotient is an
integer, even for $j \equiv \pm 1 \pmod{8}$ and odd for
$j \equiv \pm 3 \pmod{8}$. This proves (a).
\end{proof}
\subsection{Computation of\texorpdfstring{ $\Ndiff_{\partial}(\OP^2\times\mathbb{D}^{k})$}{}}
The aim of this subsection is to determine the algebraic structure of
$\Ndiff_{\partial}(\OP^{2}\times\mathbb{D}^{k})$.
The fibration $G/O\to BO\xrightarrow{\,J\,}BG$ yields a split
short exact sequence relating $[\Sigma^{k}\OP^{2},G/O]$ to
$\operatorname{Ker}[\Sigma^{k}\OP^{2},J]$ and
$\operatorname{Coker}[\Sigma^{k}\OP^{2},\Omega J]$
(Lemma~\ref{lem:normal-splitting}); the cokernel is computed in
Lemma~\ref{cok}, and the resulting decomposition is
Theorem~\ref{thm:normal-inv-decomp}. The kernel is shown here only to be free of rank two, explicit
generators for it being constructed in
Subsection~\ref{subsec:free-basis}.
\begin{lemma}\label{lem:normal-splitting}
For $k \equiv 0 \pmod{4}$ and $k \geq 1$,
\[
   \bigl[\Sigma^k \OP^2,\, G/O\bigr]
   \;\cong\;
   \Z^2 \;\oplus\;
   \operatorname{Coker}\bigl[\Sigma^k \OP^2, \Omega J\bigr],
\] where \(J\colon BO\to BG\) denotes the canonical map.
\end{lemma}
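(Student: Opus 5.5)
Apply $[\Sigma^{k}\OP^{2},-]$ to the fibration sequence $G\xrightarrow{\Omega J}O\cdots$, more precisely $O\xrightarrow{\Omega J}G\xrightarrow{\gamma}G/O\xrightarrow{r}BO\xrightarrow{J}BG$. Since $\Sigma^{k}\OP^{2}$ is a suspension and all spaces are infinite loop spaces, this yields an exact sequence of abelian groups
\[
[\Sigma^{k}\OP^{2},O]\xrightarrow{(\Omega J)_*}[\Sigma^{k}\OP^{2},G]\xrightarrow{\gamma_*}[\Sigma^{k}\OP^{2},G/O]\xrightarrow{r_*}[\Sigma^{k}\OP^{2},BO]\xrightarrow{J_*}[\Sigma^{k}\OP^{2},BG].
\]
We extract from it the short exact sequence
\[
0\to\operatorname{Coker}[\Sigma^{k}\OP^{2},\Omega J]\to[\Sigma^{k}\OP^{2},G/O]\to\operatorname{Ker}[\Sigma^{k}\OP^{2},J]\to 0.
\]

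The first key step is to show that $\operatorname{Ker}[\Sigma^{k}\OP^{2},J]$ is free of rank two. By Lemma~\ref{lem:module-basis}(a), $[\Sigma^{k}\OP^{2},BO]\cong\Z^{2}$ for $k\equiv0\pmod4$. The target $[\Sigma^{k}\OP^{2},BG]=\{\Sigma^{k+1}\OP^{2},\s^0\}$ in the stable range, i.e.\ a stable cohomotopy group of a finite complex in positive degree. It is therefore finite, because $\pi^s_n$ is finite for $n>0$ and an induction over the two cells with the cofibre sequence~\eqref{eq:cofibre} applies. Hence $\operatorname{Ker}J_*$ is a finite-index subgroup of $\Z^{2}$, so it is free abelian of rank two. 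This already forces the sequence to split, since the quotient is free.

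The second step is simply to note that the cokernel term is finite, being a quotient of the finite group $[\Sigma^{k}\OP^{2},G]$. Finiteness of $[\Sigma^{k}\OP^{2},G]$ follows the same way from the cofibre sequence and finiteness of the stable stems $\pi^s_{k+8}$ and $\pi^s_{k+16}$. This is not needed for the splitting itself, but it confirms that the $\Z^{2}$ summand is exactly the free part.

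The only step requiring care, and the one I expect to be the main obstacle, is justifying that these homotopy sets are groups and the maps homomorphisms. I would handle it by using the infinite loop structures (Boardman--Vogt) on $G/O$, $BO$, $BG$ compatible with $J$. Alternatively, since $k\ge1$, I would use the co-H structure of $\Sigma^{k}\OP^{2}$, which is abelian for $k\ge2$ and suffices for $k\ge4$ here; the long exact sequence of a fibration then consists of abelian groups and homomorphisms. Finiteness of $[\Sigma^{k}\OP^{2},BG]$ then needs only the identification $\pi_{n}(BG)=\pi^s_{n-1}$ for $n\ge2$ together with the five-lemma-type exactness along~\eqref{eq:cofibre}. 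The splitting then gives the claimed isomorphism $[\Sigma^{k}\OP^{2},G/O]\cong\Z^{2}\oplus\operatorname{Coker}[\Sigma^{k}\OP^{2},\Omega J]$.
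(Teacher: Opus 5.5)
Your proposal is correct and follows the paper's argument: the same fibre sequence gives the short exact sequence, $\operatorname{Ker}[\Sigma^k\OP^2,J]$ is free of rank two as a finite-index subgroup of $[\Sigma^k\OP^2,BO]\cong\Z^2$, and the sequence therefore splits. One small slip: since $\pi_n(BG)\cong\pi^s_{n-1}$, the group $[\Sigma^k\OP^2,BG]$ is $\{\Sigma^{k-1}\OP^2,\s^0\}$ rather than $\{\Sigma^{k+1}\OP^2,\s^0\}$; this does not affect finiteness, and your cell-by-cell justification of finiteness is in fact more than the paper provides.
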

\begin{proof}
The fibre sequence
$O \xrightarrow{\Omega J} G \xrightarrow{\gamma} G/O \xrightarrow{r} BO
\xrightarrow{J} BG$
     induces the following short exact sequence
\begin{equation}\label{eq:N-SES}
   0 \to \operatorname{Coker}\bigl[\Sigma^k \OP^2, \Omega J\bigr]
        \xrightarrow{\;\overline{\gamma_{*}}\;}
       \bigl[\Sigma^k \OP^2, G/O\bigr]
        \xrightarrow{\;r_*\;}
       \operatorname{Ker}\bigl[\Sigma^k \OP^2, J\bigr]
   \to 0,
\end{equation}
where $\overline{\gamma_{*}}$ is induced by the homomorphism $\gamma_{*}\colon\,[\Sigma^{k}\OP^{2},\,O]\rightarrow\,[\Sigma^{k}\OP^{2},\,G]$. Since $[\Sigma^k \OP^2, BO]\cong \mathbb Z^2$ by Lemma~\ref{lem:KO-OP2} and $[\Sigma^k \OP^2, BG]$ is finite, the kernel of $[\Sigma^{k}\OP^{2},\,J]\,\colon [\Sigma^k \OP^2, BO]\to [\Sigma^k \OP^2, BG]$ is a finite-index subgroup of $\mathbb Z^2$. Hence $\operatorname{Ker}[\Sigma^k \OP^2,J]$ is a free abelian group of rank~$2$. This implies that the short exact sequence~\eqref{eq:N-SES} splits.
\end{proof}

\begin{proposition}\label{kerk1}
    For $k\equiv 1\pmod{8}$, we have $\operatorname{Ker}[\Sigma^{k}\OP^2, J]=\Z/2\{\alpha^{2}_{k}\}$.
\end{proposition}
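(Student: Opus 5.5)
The plan is to compute $[\Sigma^{k}\OP^{2},J]$ on the basis $\{\alpha_{k},\alpha_{k}^{2}\}$ of Lemma~\ref{lem:module-basis}(b). The statement has two halves: show that $\alpha_{k}^{2}$ lies in the kernel, using the Adams conjecture together with Proposition~\ref{prop:adams-ops-1mod8}; and show that no element involving $\alpha_{k}$ does, by restricting to the bottom cell $\mathbb{S}^{8+k}$ and invoking Adams' injectivity of $J$ in degrees $\equiv 1\pmod 8$.

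\emph{Step 1: $\alpha_{k}^{2}\in\operatorname{Ker}[\Sigma^{k}\OP^{2},J]$.} Take $j=3$ in Proposition~\ref{prop:adams-ops-1mod8}(a). Since $3\equiv 3\pmod 8$, we have $\delta(3)=1$, and so
\[
(\Psi^{3}-1)(\alpha_{k})=\alpha_{k}^{2}.
\]
By the Adams conjecture (Quillen, Sullivan), for every finite CW complex $Y$ and every $x\in\widetilde{KO}(Y)$, the element $J(\Psi^{3}x-x)\in[Y,BG]$ has order a power of $3$. Apply this with $Y=\Sigma^{k}\OP^{2}$ and $x=\alpha_{k}$. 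Then $J(\alpha_{k}^{2})$ has order a power of $3$. On the other hand $2\alpha_{k}^{2}=0$, so $J(\alpha_{k}^{2})$ also has order dividing $2$. Hence $J(\alpha_{k}^{2})=0$.

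\emph{Step 2: nothing involving $\alpha_{k}$ lies in the kernel.} Let $x=\alpha_{k}+\epsilon\,\alpha_{k}^{2}$ with $\epsilon\in\{0,1\}$. By the proof of Lemma~\ref{lem:module-basis}, $(\Sigma^{k}i)^{*}(x)=\beta_{k+8}$, because $(\Sigma^{k}i)^{*}$ kills $\alpha_{k}^{2}$. By naturality of $J$,
\[
(\Sigma^{k}i)^{*}J(x)=J(\beta_{k+8})\in\pi_{k+8}(BG)\cong\pi^{s}_{k+7}.
\]
Here $k+8=8(\ell+1)+1$ with $\ell+1\geq 1$. The element $\beta_{k+8}=\eta\,b^{\ell+1}$ generates $\pi_{8s+1}(BO)\cong\Z/2$ with $s\geq1$. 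Adams showed in \emph{On the groups $J(X)$ IV} that $J$ is injective on $\pi_{8s+1}(BO)$; its image is detected by the $d$-invariant (the $\mu$-family elements, $\eta$ times $\mu$, in stems $8s$). So $J(\beta_{k+8})\neq0$, hence $J(x)\neq 0$, and neither $\alpha_{k}$ nor $\alpha_{k}+\alpha_{k}^{2}$ lies in the kernel.

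Combining the two steps, $\operatorname{Ker}[\Sigma^{k}\OP^{2},J]=\Z/2\{\alpha_{k}^{2}\}$.

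The step most likely to need care is Step~1. One must apply the Adams conjecture in exactly the form that gives a $3$-primary order on the finite complex $\Sigma^{k}\OP^{2}$, and note that this form is valid for the suspended real $K$-group $\widetilde{KO}^{-k}(\OP^{2})=\widetilde{KO}^{0}(\Sigma^{k}\OP^{2})$. This is harmless because $\Sigma^{k}\OP^{2}$ is itself a finite complex. The only other input needed is the precise citation for injectivity of $J$ in degrees $8s+1$ in Step~2.
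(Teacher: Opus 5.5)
Your proof is correct, but it is organized differently from the paper's. The paper quotes Adams' characterization $\operatorname{Ker}[\Sigma^{k}\OP^{2},J]=\bigcap_{e}W(e,\Sigma^{k}\OP^{2})$, which comes from Adams' $J(X)$ papers together with Quillen's proof of the Adams conjecture. It then evaluates the intersection purely algebraically. Because $\widetilde{KO}^{0}(\Sigma^{k}\OP^{2})$ has exponent $2$, the generators $\ell^{e(\ell)}(\Psi^{\ell}x-x)$ with $\ell$ even vanish once $e(\ell)\geq 1$. Those with $\ell$ odd generate $\Z/2\{\alpha_{k}^{2}\}$ by Proposition~\ref{prop:adams-ops-1mod8}.

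You instead prove the two inclusions separately.
\begin{itemize}
\item Your Step~1 is exactly the Adams-conjecture half of that characterization. You apply it to the single element $(\Psi^{3}-1)\alpha_{k}=\alpha_{k}^{2}$ and finish with the coprimality of $2$ and $3$.
\item Your Step~2 replaces Adams' general lower bound for $\ker J$ on a finite complex. You use naturality of $J$ along the bottom-cell inclusion $\Sigma^{k}i$, which reduces everything to injectivity of the stable $J$-homomorphism on $\pi_{8s}(SO)\cong\Z/2$ for $s\geq 1$ (Adams, $J(X)$~IV).
\end{itemize}

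Your route is more economical and more transparent. It needs only the Adams conjecture for one element plus the classical sphere computation. It also shows directly why $\alpha_{k}$ and $\alpha_{k}+\alpha_{k}^{2}$ survive: both restrict to $\beta_{k+8}$ on $\mathbb{S}^{k+8}$. The cost is that it works only because the non-kernel classes happen to be detected on a single cell. The paper's route is a uniform algebraic computation that needs no such detecting cell. Its price is that it rests on the heavier identification of $\ker J$ with $\bigcap_{e}W(e,X)$ for an arbitrary finite complex $X$.

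One correction to the parenthetical in your Step~2.
\begin{itemize}
\item The $d$-invariant cannot detect anything torsion in stem $8s$, since it takes values in $\pi_{8s}(KO)\cong\Z$. The image of $J$ in stem $8s$ is detected by Adams' (real) $e$-invariant.
\item The $\mu$-family lives in stems $8s+1$ and $8s+2$, is detected by the $d$-invariant, and is \emph{not} in the image of $J$.
\end{itemize}
The fact you actually use is correct: $J\colon\pi_{r}(SO)\to\pi_{r}^{s}$ is a monomorphism for $r\equiv 0,1\pmod 8$, $r>0$. So simply cite it and drop the parenthetical.
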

\begin{proof}
For a function $e$ assigning to each integer $\ell$ a non-negative
integer $e(\ell)$, let $W(e,\Sigma^{k}\OP^{2})\subseteq
\widetilde{KO}^{0}(\Sigma^{k}\OP^{2})$ denote the subgroup generated by
the elements $\ell^{e(\ell)}\bigl(\Psi^{\ell}x-x\bigr)$, where
$\ell\in\Z$ and $x\in\widetilde{KO}^{0}(\Sigma^{k}\OP^{2})$
\cite[p.~208--209]{Ada65b}. By \cite[Proposition~3.1,
Theorem~6.1]{Ada65a} and \cite[Theorem~1.1]{Ada65b}, together with the
Adams conjecture \cite[Conjecture~(1.2)]{Ada63}, proved by
Quillen~\cite{Qui71},
\[
\operatorname{Ker}[\Sigma^{k}\OP^{2},J]
=\bigcap_{e}W(e,\Sigma^{k}\OP^{2}).
\]
Since $\widetilde{KO}^{0}(\Sigma^{k}\OP^{2})\cong\Z/2\oplus\Z/2$ by
Lemma~\ref{1mod8}, the generators with odd $\ell$ reduce to
$\Psi^{\ell}x-x$, while those with even $\ell$ vanish once
$e(\ell)\geq 1$. Hence the intersection equals the subgroup generated
by the elements $\Psi^{j}x-x$ with $j$ odd, which by
Proposition~\ref{prop:adams-ops-1mod8} is $\Z/2\{\alpha_{k}^{2}\}$,
since $\Psi^{j}\alpha_{k}-\alpha_{k}=\delta(j)\,\alpha_{k}^{2}$ with
$\delta(j)=1$ for $j\equiv\pm3\pmod 8$, and
$\Psi^{j}\alpha_{k}^{2}=\alpha_{k}^{2}$. This completes the proof.
\end{proof}
\begin{lemma}\label{cok}
 For $k\geq 1$, we have   
 \[T_{k}:=\operatorname{Coker}[\Sigma^{k}\OP^2,\Omega J]=\begin{cases}
        [\Sigma^{k}\OP^2, G] & \text{for }k\equiv 4\pmod8,\\
        [\Sigma^{k}\OP^2, G]/{\Z/2} & \text{for }k\equiv 0\pmod{8},
    \end{cases}\]
    where for $k\equiv0\pmod8$ the subgroup $\Z/2$ is the image of 
$[\Sigma^{k}\OP^{2},\Omega J]\colon
[\Sigma^{k}\OP^{2},O]\rightarrow[\Sigma^{k}\OP^{2},G]$.
\end{lemma}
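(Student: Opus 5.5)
The image of $[\Sigma^{k}\OP^{2},\Omega J]$ equals the image of the connecting map in the long exact sequence of the fibration $G/O\to BO\xrightarrow{J}BG$, shifted once. So the plan is to compute that image by exactness, using the kernels of $J$ after one more suspension.

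Concretely, the fibre sequence
\[
O\xrightarrow{\Omega J}G\xrightarrow{\gamma}G/O\xrightarrow{r}BO\xrightarrow{J}BG
\]
gives the identification
\[
[\Sigma^{k}\OP^{2},O]\cong[\Sigma^{k+1}\OP^{2},BO],
\qquad
[\Sigma^{k}\OP^{2},G]\cong[\Sigma^{k+1}\OP^{2},BG],
\]
under which $[\Sigma^{k}\OP^{2},\Omega J]$ becomes $[\Sigma^{k+1}\OP^{2},J]$. Hence
\[
\operatorname{im}[\Sigma^{k}\OP^{2},\Omega J]\cong[\Sigma^{k+1}\OP^{2},BO]\big/\operatorname{Ker}[\Sigma^{k+1}\OP^{2},J],
\]
and it suffices to compute the source group and the kernel in degree $k+1$.

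\emph{Case $k\equiv 4\pmod 8$.} Here $k+1\equiv 5\pmod 8$, so $k+9\equiv 5$ and $k+17\equiv 5\pmod 8$. By \eqref{eq:KO-pt-groups}, $\pi_{k+9}(BO)=\pi_{k+17}(BO)=0$. Feeding this into the cofibre sequence \eqref{BO} with $k$ replaced by $k+1$ shows $[\Sigma^{k+1}\OP^{2},BO]=0$. Thus the image of $\Omega J$ is trivial and $T_k=[\Sigma^{k}\OP^{2},G]$.

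\emph{Case $k\equiv 0\pmod 8$, $k\ge 8$.} Write $k=8\ell'$, so $k+1=8\ell'+1\equiv 1\pmod 8$. Lemma~\ref{1mod8}(b) gives
\[
[\Sigma^{k+1}\OP^{2},BO]=\Z/2\{\alpha_{k+1}\}\oplus\Z/2\{\alpha_{k+1}^{2}\},
\]
and Proposition~\ref{kerk1} gives $\operatorname{Ker}[\Sigma^{k+1}\OP^{2},J]=\Z/2\{\alpha_{k+1}^{2}\}$. The quotient is $\Z/2$, generated by the image of $\alpha_{k+1}$. Therefore the image of $[\Sigma^{k}\OP^{2},\Omega J]$ is a subgroup $\Z/2$ of $[\Sigma^{k}\OP^{2},G]$, and $T_k=[\Sigma^{k}\OP^{2},G]/(\Z/2)$, as claimed. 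Here $k\equiv 0\pmod 8$ with $k\ge 1$ forces $k\ge 8$, so $\ell'\ge1$; Lemma~\ref{1mod8} and Proposition~\ref{kerk1} only require $k+1\equiv1\pmod 8$, which holds.

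The main work sits in Proposition~\ref{kerk1}, which is already established, so no step here should be a serious obstacle. The one point needing care is the identification of $\Omega J$ with $J$ after one suspension. This requires $\Omega BO\simeq O$ and $\Omega BG\simeq G$ as loop spaces, and then the adjunction $[\Sigma^{k}\OP^{2},\Omega Y]\cong[\Sigma^{k+1}\OP^{2},Y]$, which is natural in $Y$. With this, the cokernel of $\Omega J$ is the cokernel of $J$ on $\Sigma^{k+1}\OP^{2}$, and the image of $\Omega J$ has order $|\text{source}|/|\text{kernel}|$.
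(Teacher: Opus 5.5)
Your proposal is correct and follows the paper's proof essentially step for step. For $k\equiv 4\pmod 8$ you show $[\Sigma^{k}\OP^{2},O]\cong[\Sigma^{k+1}\OP^{2},BO]=0$ from the vanishing of $\pi_{k+9}(BO)$ and $\pi_{k+17}(BO)$ in the sequence \eqref{BO}. For $k\equiv 0\pmod 8$ you identify $[\Sigma^{k}\OP^{2},\Omega J]$ with $[\Sigma^{k+1}\OP^{2},J]$ by adjunction and read off an image of order $2$ from Lemma~\ref{lem:module-basis}(b) and Proposition~\ref{kerk1}, exactly as the paper does.
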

\begin{proof}
Suppose $k\equiv 4\pmod 8$. Since $\pi_{17+k}(BO)$ and
$\pi_{9+k}(BO)$ vanish by~\eqref{eq:KO-pt-groups}, we obtain from the long exact sequence~\eqref{BO} that $[\Sigma^{k}\OP^{2},O] \cong [\Sigma^{k+1}\OP^{2},BO] = 0$. This implies that $\operatorname{Coker}[\Sigma^{k}\OP^2,\Omega J]\cong [\Sigma^k \OP^2, G]$.

For $k\equiv0\pmod8$ the map $[\Sigma^{k}\OP^{2},\Omega J]$ is adjoint to
$[\Sigma^{k+1}\OP^{2},J]$, whose domain is
$\Z/2\{\alpha_{k+1}\}\oplus\Z/2\{\alpha^{2}_{k+1}\}$ by
Lemma~\ref{lem:module-basis}(b) and whose kernel is
$\Z/2\{\alpha^{2}_{k+1}\}$ by Proposition~\ref{kerk1}; hence its image has
order $2$, and the cokernel is $[\Sigma^{k}\OP^{2},G]\big/(\Z/2)$.
\end{proof}

\begin{theorem}\label{thm:normal-inv-decomp}
For every $k\geq 1$ with $k\equiv 0\pmod{4}$, the group
$\Ndiff_\partial(\OP^2 \times \mathbb{D}^k)$ is finitely generated of rank $3$, and under the splitting~\eqref{eq:N-PT-splitting}, whose torsion subgroup $\operatorname{Im}\overline{\gamma_{*}}\oplus
\operatorname{Im}\eta^{\diff}_{\mathbb{S}^{k}}$ is isomorphic to $T_{k}\oplus\Theta_{k}$, where $T_{k}$ is the finite
abelian group computed in Lemma~\ref{cok}. Consequently,
\[
\Ndiff_\partial(\OP^2 \times \mathbb{D}^k)
\;\cong\;
\Z^{3} \oplus T_{k} \oplus \Theta_{k}.
\]
\end{theorem}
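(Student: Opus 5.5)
We combine the splitting \eqref{eq:N-PT-splitting} with Lemma~\ref{lem:normal-splitting}, and treat the two summands of \eqref{eq:N-PT-splitting} separately.

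\emph{The summand $[\Sigma^{k}\OP^{2},G/O]$.} Lemma~\ref{lem:normal-splitting} gives $[\Sigma^{k}\OP^{2},G/O]\cong\Z^{2}\oplus T_{k}$, and the $T_{k}$ summand is embedded by $\overline{\gamma_{*}}$ via \eqref{eq:N-SES}. We must check that $T_{k}$ is finite. Since $\Sigma^{k}\OP^{2}$ is a finite complex, $[\Sigma^{k}\OP^{2},G]$ is its stable cohomotopy in positive stem degrees ($k\geq 4$). Apply $[-,G]$ to the cofibre sequence \eqref{eq:cofibre}, suspended $k$ times. This places $[\Sigma^{k}\OP^{2},G]$ between $\pi_{16+k}(G)=\pi^s_{16+k}$ and $\pi_{8+k}(G)=\pi^s_{8+k}$. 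Both groups are finite by Serre's theorem, so $[\Sigma^{k}\OP^{2},G]$ is finite, and so is its quotient $T_{k}$ from Lemma~\ref{cok}. Since $\overline{\gamma_{*}}$ is injective and $r_{*}$ lands in the free group $\operatorname{Ker}[\Sigma^{k}\OP^{2},J]\cong\Z^{2}$, the torsion subgroup of $[\Sigma^{k}\OP^{2},G/O]$ is exactly $\operatorname{Im}\overline{\gamma_{*}}\cong T_{k}$.

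\emph{The summand $\pi_{k}(G/O)$.} Run the same fibre sequence on $\mathbb{S}^{k}$, i.e.\ on homotopy groups. Since $k\equiv0\pmod4$, the group $\pi_{k}(BO)\cong\Z$ and $\pi_{k}(BG)=\pi^s_{k-1}$ is finite, so $\operatorname{Ker}(J_{*})\cong\Z$. The cokernel of $\pi_{k}(O)\to\pi_{k}(G)$ is $\operatorname{coker}(J\colon\pi_{k}(SO)\to\pi^{s}_{k})$. We then invoke Kervaire--Milnor for $k\equiv0\pmod 4$: the surgery exact sequence for $\mathbb{S}^{k}$ is
\[
0\to\Theta_{k}\xrightarrow{\eta^{\diff}_{\mathbb{S}^{k}}}\pi_{k}(G/O)\xrightarrow{\sigma}L_{k}(\Z)=\Z.
\]
Here $\Theta_{k}=bP_{k+1}\oplus$ (a piece mapping onto $\operatorname{coker}J_{k}$); more precisely, $L_{k+1}=0$ gives injectivity. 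Since $\pi_{k}(G/O)$ has rank one and $\Theta_{k}$ is finite, $\operatorname{Im}\eta^{\diff}_{\mathbb{S}^{k}}$ is the torsion of $\pi_{k}(G/O)$, and the quotient embeds in $\Z$. Hence $\pi_{k}(G/O)\cong\Z\oplus\Theta_{k}$, with torsion $\operatorname{Im}\eta^{\diff}_{\mathbb{S}^{k}}$.

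\emph{Assembly.} Adding the two summands gives rank $2+1=3$, torsion $\operatorname{Im}\overline{\gamma_{*}}\oplus\operatorname{Im}\eta^{\diff}_{\mathbb{S}^{k}}\cong T_{k}\oplus\Theta_{k}$, and finite generation. A finitely generated abelian group splits as free part plus torsion, which yields $\Z^{3}\oplus T_{k}\oplus\Theta_{k}$.

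The main obstacle is the $\pi_{k}(G/O)$ step. One must make sure the torsion of $\pi_{k}(G/O)$ is exactly the image of $\Theta_{k}$, and not only of $\operatorname{coker}J$. The argument above relies on exactness of the surgery sequence for the sphere, namely that $\Theta_{k}\cong\mathcal{S}^{\diff}(\mathbb{S}^{k})$ injects via $L_{k+1}(\Z)=0$. It also uses that $\sigma$ has infinite image, since $\pi_{k}(G/O)\otimes\Q\cong\Q$ is detected by the signature. Both facts need to be checked carefully for $k=4$, where the smooth Poincaré conjecture input is replaced by $\Theta_{4}$ as defined via $h$-cobordism. If $k=4$ is problematic, we would treat it by hand: $\pi_{4}(G/O)\cong\Z$ and $\Theta_{4}=0$ in the Kervaire--Milnor convention.
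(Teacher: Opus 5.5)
Your proposal is correct and follows essentially the paper's proof. You split via \eqref{eq:N-PT-splitting}, get $[\Sigma^{k}\OP^{2},G/O]\cong\Z^{2}\oplus T_{k}$ with torsion $\operatorname{Im}\overline{\gamma_{*}}$ from Lemmas~\ref{lem:normal-splitting} and~\ref{cok}, and use $\pi_{k}(G/O)\cong\Z\oplus\Theta_{k}$ with torsion $\operatorname{Im}\eta^{\diff}_{\mathbb{S}^{k}}$. Where the paper cites this last fact for $k\geq 5$, you derive it from the surgery exact sequence of $\mathbb{S}^{k}$; you also add a justification that $T_{k}$ is finite.

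Two small remarks. First, your worry about $\Theta_{k}$ versus $\operatorname{coker}J_{k}$ is moot: $bP_{k+1}=0$ and $\Theta_{k}\cong\operatorname{coker}J_{k}$ for $k\equiv 0\pmod 4$. Second, the treatment of $k=4$ by hand ($\pi_{4}(G/O)\cong\Z$, $\Theta_{4}=0$) is not an optional fallback but required, since the surgery exact sequence is unavailable for $\mathbb{S}^{4}$; this is exactly what the paper does.
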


\begin{proof}
By the identification~\eqref{eq:N-PT-splitting}, it suffices to compute the two summands $\pi_{k}(G/O)$ and $[\Sigma^{k}\OP^{2},G/O]$. 

For the first summand, let $k\geq 5$. As $k\equiv 0\pmod 4$, we have
$\pi_{k}(G/O)\cong\Z\oplus\Theta_{k}$, where
$\Theta_{k}$ is embedded in $\pi_{k}(G/O)$ as its torsion subgroup by the
Kervaire--Milnor homomorphism $\eta^{\diff}_{\mathbb{S}^{k}}$; see \cite{andrew}. The same
isomorphism holds for $k=4$, since $\pi_{4}(G/O)\cong\Z$ \cite{Sullivan} and $\Theta_{4}=0$~\cite{homotopysphere}.

For the second summand, Lemmas~\ref{lem:normal-splitting} and~\ref{cok} yield
\[
[\Sigma^k\OP^2, G/O]
\;\cong\;
\Z^{2}\oplus\operatorname{Im}\overline{\gamma_{*}}
\;\cong\;
\Z^{2} \oplus T_{k}.
\]
The result now follows from the splitting~\eqref{eq:N-PT-splitting}.
\end{proof}
\subsection{\texorpdfstring{A basis of the free part of the group $\Ndiff_{\partial}(\OP^2\times\mathbb{D}^{k})$}{A basis of the free part of the normal invariants}}\label{subsec:free-basis}
Theorem~\ref{thm:normal-inv-decomp} identifies
$\Ndiff_{\partial}(\OP^{2}\times\mathbb{D}^{k})_{\mathrm{free}}$
abstractly with $\Z^{3}$, but evaluating the surgery obstruction
requires an explicit description of its generators. In this
subsection we exhibit a basis of
$\operatorname{Ker}[\Sigma^{k}\OP^{2},J]$
(Theorem~\ref{prop:explicit-gens}), which together with a generator
of $\operatorname{Ker}\pi_{k}(J)$ yields the $\Z$-basis
$\{\widetilde{\xi_{\mathbb{S}^{k}}},\widetilde{\xi_{1,k}},
\widetilde{\xi_{2,k}}\}$ used throughout
Section~\ref{sec:obstr-free} (Corollary~\ref{cor:free-basis}).

For a finite CW complex $X$, let $\sh\colon
\widetilde{KO}^{0}(X)\rightarrow
1+\prod_{s>0}\widetilde{H}^{4s}(X;\Q)$ denote the characteristic class associated with the multiplicative
sequence having characteristic power series $\frac{\sinh(\sqrt{z}/2)}{\sqrt{z}/2}$ \cite[p.~154]{Ada65a}. By \cite[Proposition~5.2]{Ada65a}, for every
$\eta\in\widetilde{KO}^{0}(X)$,
\begin{equation}\label{shcharacter}
\log\sh(\eta)
=
\sum_{s\ge1}\frac{a_{2s}}{2}\,\ph_{2s}(\eta),
\end{equation}
where $\ph_r$ denotes the component of the Pontryagin character in
$H^{2r}(X;\Q)$, $\log(1+x)$ is defined by its usual power series, and
the coefficients $a_{2s}$ (denoted $\alpha_{2s}$ in
\cite[p.~154]{Ada65a}) are determined by $\log\!\left(\tfrac{\sinh(x/2)}{x/2}\right)
=
\sum_{s\ge1}a_{2s}\tfrac{x^{2s}}{(2s)!}$.


Since \(\widetilde{KO}^{0}(\Sigma^{k}\OP^{2})\) is torsion free for
\(k\equiv 0 \pmod{4}\) by Lemma~\ref{lem:module-basis}(a), it follows
from \cite[Corollary~2.7]{feder} that
\begin{equation}\label{kernel_description}
\operatorname{Ker}[\Sigma^{k}\OP^{2},J]
=
\{\xi\in \widetilde{KO}^{0}(\Sigma^{k}\OP^{2}) \mid
\sh(\xi)=\ph(1+\zeta)\ \text{for some}\
\zeta\in \widetilde{KO}^{0}(\Sigma^{k}\OP^{2})\}.
\end{equation}
Combining~\eqref{kernel_description} with~\eqref{shcharacter}, and
noting that $\log$ is invertible on
$1+\prod\limits_{s>0}\widetilde{H}^{4s}(\Sigma^{k}\OP^{2};\Q)$, we find that
$\xi\in\operatorname{Ker}[\Sigma^{k}\OP^{2},J]$ if and only if
\begin{equation}\label{eq:sh-ph-log}
\sum_{s \geq 1} \frac{a_{2s}}{2}\,\ph_{2s}(\xi)
= \log\ph(1+\zeta)
= \sum_{i \geq 1} \frac{(-1)^{i-1}}{i}\,\ph(\zeta)^{i},
\end{equation}
for some $\zeta\in\widetilde{KO}^{0}(\Sigma^{k}\OP^{2})$. Since
$\Sigma^{k}\OP^{2}$ is a suspension, all cup products of
positive-degree classes vanish, and hence~\eqref{eq:sh-ph-log} reduces to
\begin{equation}\label{eq:sh-ph-linear}
   \frac{a_{2s}}{2}\,\ph_{2s}(\xi)
   = \ph_{2s}(\zeta),
   \quad \text{for each } s \geq 1.
\end{equation}
Thus $\xi\in\operatorname{Ker}[\Sigma^{k}\OP^{2},J]$ if and only if~\eqref{eq:sh-ph-linear} admits a solution
$\zeta\in\widetilde{KO}^{0}(\Sigma^{k}\OP^{2})$.
\begin{theorem}\label{prop:explicit-gens}
Let $k=4m$ with $m\geq1$, and for $s\geq1$ let $j_{s}$ denote the denominator
of $B_{s}/(4s)$ and $n_{s}$ its numerator, so that $\gcd(n_{s},j_{s})=1$.
Then $\operatorname{Ker}[\Sigma^{k}\OP^{2},J]$ is free abelian of rank $2$,
with a $\Z$-basis given, in the notation of Lemma~\ref{lem:KO-OP2}, by
\begin{equation}\label{kernel_generator}
\xi_{1,k}=j_{m+2}\,\alpha_{k}+c_{m}\,\alpha_{k}^{2},
\qquad
\xi_{2,k}=j_{m+4}\,\alpha_{k}^{2},
\end{equation}
where $c_{m}$ is the unique integer with $0\leq c_{m}<j_{m+4}$ satisfying
\begin{equation}\label{eq:cm-congruence}
n_{m+4}\,c_{m}\equiv
-\frac{n_{m+4}\,j_{m+2}-n_{m+2}\,j_{m+4}}{240}
\pmod{j_{m+4}}.
\end{equation}
\end{theorem}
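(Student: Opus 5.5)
The plan is to use the criterion~\eqref{eq:sh-ph-linear} directly. It turns membership in $\operatorname{Ker}[\Sigma^{k}\OP^{2},J]$ into an integrality condition on two rational numbers, and we will solve that condition explicitly.

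\emph{Step 1: Pontryagin characters of the basis.} Write a general element as $\xi=x\,\alpha_k+y\,\alpha_k^{2}$ and a candidate as $\zeta=x'\alpha_k+y'\alpha_k^{2}$, with $x,y,x',y'\in\Z$; this is possible by Lemma~\ref{lem:module-basis}(a). Use multiplicativity of $\ph$ for external products, the normalization~\eqref{eqn: ch at beta}, and the fact that $c\colon KO^{-k}(\mathrm{pt})\to KU^{-k}(\mathrm{pt})$ has degree $\kappa_m$. This gives, up to a fixed sign $\varepsilon$ coming from the choice of $\beta_k$,
\[
\ph(\alpha_k)=\varepsilon\kappa_m\bigl(\sigma^k u+\tfrac{1}{240}\sigma^k u^2\bigr),\qquad \ph(\alpha_k^2)=\varepsilon\kappa_m\,\sigma^k u^2 .
\]
Since $\sigma^ku$ has degree $4(m+2)$ and $\sigma^ku^2$ has degree $4(m+4)$, only the indices $s=m+2$ and $s=m+4$ of~\eqref{eq:sh-ph-linear} are nontrivial. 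The common factor $\varepsilon\kappa_m$ appears on both sides and cancels.

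\emph{Step 2: the integrality conditions.} Next identify the coefficients through Adams' formula (\cite[Proposition~5.2]{Ada65a} and the expansion of $\log(\sinh(x/2)/(x/2))$): $\tfrac{a_{2s}}{2}=\pm B_s/(4s)=\pm n_s/j_s$. The two equations then become
\[
x'=\pm\frac{n_{m+2}}{j_{m+2}}\,x,\qquad \frac{x'}{240}+y'=\pm\frac{n_{m+4}}{j_{m+4}}\Bigl(\frac{x}{240}+y\Bigr).
\]
Because $\gcd(n_{m+2},j_{m+2})=1$, the first equation is solvable in integers exactly when $x=a\,j_{m+2}$ for some $a\in\Z$, and then $x'=\pm a\,n_{m+2}$. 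Substituting into the second equation, $\xi$ lies in the kernel if and only if
\[
\frac{n_{m+4}\,y}{j_{m+4}}+a\,\frac{n_{m+4}\,j_{m+2}-n_{m+2}\,j_{m+4}}{240\,j_{m+4}}\in\Z ,
\]
after fixing signs consistently with~\eqref{eq:cm-congruence}.

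\emph{Step 3: reading off the basis.} Consider the projection from the kernel lattice to $a\in\Z$. For $a=0$ the condition reads $j_{m+4}\mid n_{m+4}y$, that is, $j_{m+4}\mid y$, so the kernel of this projection is $\Z\{\xi_{2,k}\}$. For $a=1$, suppose $240\mid n_{m+4}j_{m+2}-n_{m+2}j_{m+4}$. Then the condition is exactly the congruence~\eqref{eq:cm-congruence}. This congruence has a unique solution $0\le c_m<j_{m+4}$ because $n_{m+4}$ is invertible modulo $j_{m+4}$, and it gives $\xi_{1,k}$ in the kernel. Hence the projection is onto $\Z$, and $\{\xi_{1,k},\xi_{2,k}\}$ is a $\Z$-basis; rank two is consistent with Lemma~\ref{lem:normal-splitting}.

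\emph{Main obstacle.} I expect two points to need the most care. The first is the arithmetic fact $240\mid n_{m+4}j_{m+2}-n_{m+2}j_{m+4}$, which is needed for~\eqref{eq:cm-congruence} to make sense as an integer congruence. I would prove it prime by prime for $p=2,3,5$ using von Staudt--Clausen and the known $p$-adic valuations of the denominators of $B_s/4s$, taken from the Bernoulli facts in Appendix~\ref{sec:appendix1}. I would separate the cases $m$ even and $m$ odd, since $5\mid j_s$ iff $s$ is even, and $m+2$ and $m+4$ have the same parity, which should give the matching. The second point is tracking signs, namely the sign of $a_{2s}$ and the orientation of $\beta_k$, so that the sign in~\eqref{eq:cm-congruence} comes out exactly as stated.
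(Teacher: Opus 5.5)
Your argument for the theorem itself is the paper's proof, provided the divisibility $240\mid n_{m+4}j_{m+2}-n_{m+2}j_{m+4}$ is granted. You use the same Pontryagin characters of $\alpha_k,\alpha_k^2$ and the same two scalar equations from \eqref{eq:sh-ph-linear}. You reach the same conclusion $j_{m+2}\mid x$ and the same congruence for the $\alpha_k^2$-coefficient. Your projection onto $a$ is a tidier packaging of the paper's final generation step. On signs, the fact that makes \eqref{eq:cm-congruence} come out as stated is that $a_{2s}/2=(-1)^{s-1}n_s/j_s$ has the same sign for $s=m+2$ and $s=m+4$. So the two $\pm$ in your Step~2 agree, and the sign of $\beta_k$ cancels as you say.

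The step that would not go through as you describe it is that divisibility at the prime $5$ when $m$ is odd. Then $m+2$ and $m+4$ are both odd, so $5\nmid j_{m+2}j_{m+4}$, and the parity observation gives nothing. What you need is $n_{m+4}/j_{m+4}\equiv n_{m+2}/j_{m+2}\pmod 5$, which is a congruence between numerators. Von Staudt--Clausen and the denominator valuations cannot detect this; already for $m=5$ it involves $n_9=43867$. The missing ingredient is Kummer's congruence for $\widetilde{B}_{2s}/(2s)$ at $p=5$. It applies here because $2(m+2)\equiv 2(m+4)\equiv 2\pmod 4$, so these quotients are $5$-integral and congruent modulo $5$. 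This is exactly how the paper's Lemma~\ref{integrality} handles the case.

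Your valuation argument does cover the remaining cases:
\begin{itemize}
\item For $m$ even, $240$ divides both $j_{m+2}$ and $j_{m+4}$.
\item For $m$ odd, the prime $3$ is automatic.
\item For $m$ odd at $p=2$, write $j_s=8\cdot(\text{odd})$ and use that $n_s$ is odd. The two products are then odd, so their difference is even and $2^4\mid\Lambda$.
\end{itemize}
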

\begin{proof}
In view of Lemma~\ref{integrality}, the right-hand side of the congruence~\eqref{eq:cm-congruence} is an integer. Since $\gcd(n_{m+4}, j_{m+4}) = 1$, it follows that there exists a unique integer $c_m$ with $0 \le c_m < j_{m+4}$ satisfying the congruence~\eqref{eq:cm-congruence}.

Recall that by equation~\eqref{eqn: ch at beta}, the class
$\beta=c(\alpha)\in\widetilde{KU}^{0}(\OP^{2})$ satisfies
$\ch(\beta)=u+\tfrac{1}{240}u^{2}$. Since $c$ is a ring
homomorphism, $c(\alpha^{2})=\beta^{2}$, and hence
\begin{equation}\label{eq:ch-c-gamma}
\ch(c(\alpha^{2}))
=\Bigl(u+\frac{1}{240}\,u^{2}\Bigr)^{2}
=u^{2},
\end{equation}
as $u^{3}=0$ in $H^{*}(\OP^{2};\Q)$.
The complexification of the Bott class
$\beta_{k}\in KO^{-k}(\mathrm{pt})$ is
$c(\beta_{k})=\kappa_{m}\,b_{U}^{2m}\in KU^{-k}(\mathrm{pt})\cong\Z$,
where $b_{U}\in KU^{-2}(\mathrm{pt})$ is the complex Bott element \cite[p.~62]{spingeo} and
$\kappa_{m}$ is as in~\eqref{eq:kappa-def}. Recall from
Lemma~\ref{lem:module-basis} that
$\widetilde{KO}^{0}(\Sigma^{k}\OP^{2})$ is freely generated by
$\alpha_{k}=\beta_{k}\cdot\alpha$ and
$\alpha_{k}^{2}=\beta_{k}\cdot\alpha^{2}$. Since the Chern character is
multiplicative with respect to external products and
$\ch(b_{U}^{2m})=\iota_{k}$, by equations~\eqref{eqn: ch at beta} and~\eqref{eq:ch-c-gamma}, we obtain
\begin{align}
\ph(\alpha_{k})
&=\ch\bigl(c(\beta_{k})\cdot c(\alpha)\bigr)
=\kappa_{m}\,\iota_{k}\wedge\bigl(u+\tfrac{1}{240}\,u^{2}\bigr)
=\kappa_{m}\bigl(\sigma^{k}u+\tfrac{1}{240}\,\sigma^{k}u^{2}\bigr),
\label{eq:ph-alpha-k}\\[2pt]
\ph(\alpha_{k}^{2})
&=\ch\bigl(c(\beta_{k})\cdot c(\alpha^{2})\bigr)
=\kappa_{m}\,\iota_{k}\wedge u^{2}
=\kappa_{m}\,\sigma^{k}u^{2}.
\label{eq:ph-alpha2-k}
\end{align}

Now, let $\xi=x_{1}\alpha_{k}+x_{2}\alpha_{k}^{2}$ and
$\zeta=y_{1}\alpha_{k}+y_{2}\alpha_{k}^{2}$ with
$x_{1},x_{2},y_{1},y_{2}\in\Z$. By~\eqref{eq:ph-alpha-k} and
\eqref{eq:ph-alpha2-k}, we have
\begin{align*}
\ph(\xi)=\kappa_{m}\Bigl[x_{1}\,\sigma^{k}u
+\bigl(\tfrac{x_{1}}{240}+x_{2}\bigr)\sigma^{k}u^{2}\Bigr],\quad
\ph(\zeta)=\kappa_{m}\Bigl[y_{1}\,\sigma^{k}u
+\bigl(\tfrac{y_{1}}{240}+y_{2}\bigr)\sigma^{k}u^{2}\Bigr].
\end{align*}
Since $\widetilde{H}^{*}(\Sigma^{k}\OP^{2};\Q)$ is concentrated in
degrees $k+8=4(m+2)$ and $k+16=4(m+4)$, the component $\ph_{2s}$
vanishes unless $s\in\{m+2,\,m+4\}$. Hence, by equation
\eqref{eq:sh-ph-linear}, the relation $\sh(\xi)=\ph(1+\zeta)$ holds if
and only if, after cancelling the common factor $\kappa_{m}$,
\begin{equation}\label{eq:first-scalar}
\frac{a_{4+2m}}{2}\,x_{1}=y_{1}
\end{equation}
and
\begin{equation}\label{eq:second-scalar}
\frac{a_{8+2m}}{2}\Bigl(\frac{x_{1}}{240}+x_{2}\Bigr)
=\frac{y_{1}}{240}+y_{2}.
\end{equation}
Now we observe from \cite[p.~138]{Ada65a} that
$\frac{a_{2s}}{2}=(-1)^{s-1}\frac{B_{s}}{4s}
=(-1)^{s-1}\frac{n_{s}}{j_{s}}$. Setting
$\epsilon_{m}:=(-1)^{m+1}$ and noting that
$(-1)^{(m+2)-1}=(-1)^{(m+4)-1}=\epsilon_{m}$, we can rewrite equations~\eqref{eq:first-scalar} and~\eqref{eq:second-scalar} as
\begin{align}
\epsilon_{m}\,\frac{n_{m+2}}{j_{m+2}}\,x_{1}&=y_{1},
\label{eq:row1-B}\\[2pt]
\epsilon_{m}\,\frac{n_{m+4}}{j_{m+4}}
\Bigl(\frac{x_{1}}{240}+x_{2}\Bigr)&=\frac{y_{1}}{240}+y_{2}.
\label{eq:row2-B}
\end{align}
In view of~\eqref{kernel_description}, we conclude that
$\xi\in\operatorname{Ker}[\Sigma^{k}\OP^{2},J]$ if and only if there
exist $y_{1},y_{2}\in\Z$ satisfying equations~\eqref{eq:row1-B} and
\eqref{eq:row2-B}. Write 
$\Delta:=\tfrac{n_{m+4}\,j_{m+2}-n_{m+2}\,j_{m+4}}{240}$, with $\Delta \in \mathbb{Z}$ by Lemma~\ref{integrality}.

We first show that the classes $\xi_{1,k}$, $\xi_{2,k}$ of
\eqref{kernel_generator} lie in
$\operatorname{Ker}[\Sigma^{k}\OP^{2},J]$. For $\xi_{1,k}$, that is,
$(x_{1},x_{2})=(j_{m+2},c_{m})$, the integers
$y_{1}=\epsilon_{m}\,n_{m+2}$ and $y_{2}=\epsilon_{m}\,w_{m}$, where
\[
w_{m}
:=\frac{n_{m+4}}{j_{m+4}}\Bigl(\frac{j_{m+2}}{240}+c_{m}\Bigr)
-\frac{n_{m+2}}{240}
=\frac{n_{m+4}\,c_{m}+\Delta}{j_{m+4}},
\]
solve equations~\eqref{eq:row1-B} and~\eqref{eq:row2-B}; here $w_{m}\in\Z$
because $n_{m+4}\,c_{m}+\Delta\equiv 0\pmod{j_{m+4}}$ by
the congruence~\eqref{eq:cm-congruence}. For $\xi_{2,k}$, that is,
$(x_{1},x_{2})=(0,j_{m+4})$, the integers $y_{1}=0$ and
$y_{2}=\epsilon_{m}\,n_{m+4}$ solve~ equations\eqref{eq:row1-B} and
\eqref{eq:row2-B}. Moreover, $\xi_{1,k}$ and $\xi_{2,k}$ are
$\Z$-linearly independent, as their coefficient matrix with respect to
the basis $\{\alpha_{k},\alpha_{k}^{2}\}$ is
\[
\begin{pmatrix} j_{m+2} & 0 \\ c_{m} & j_{m+4}\end{pmatrix},
\]
has determinant $j_{m+2}\,j_{m+4}>0$.

It remains to show that $\xi_{1,k}$ and $\xi_{2,k}$ generate
$\operatorname{Ker}[\Sigma^{k}\OP^{2},J]$. Suppose
$\xi=x_{1}\alpha_{k}+x_{2}\alpha_{k}^{2}
\in\operatorname{Ker}[\Sigma^{k}\OP^{2},J]$, and let
$y_{1},y_{2}\in\Z$ satisfy~equations \eqref{eq:row1-B} and~\eqref{eq:row2-B}.
Since $\gcd(n_{m+2},j_{m+2})=1$ and $y_{1}\in\Z$, equation
\eqref{eq:row1-B} yields $j_{m+2}\mid x_{1}$. Write
$x_{1}=j_{m+2}\,t$ with $t\in\Z$; then
$y_{1}=\epsilon_{m}\,n_{m+2}\,t$. Then equation~\eqref{eq:row2-B} yields
\[
\epsilon_{m}\,y_{2}
=\frac{n_{m+4}}{j_{m+4}}\Bigl(\frac{j_{m+2}\,t}{240}+x_{2}\Bigr)
-\frac{n_{m+2}\,t}{240}
=\frac{n_{m+4}\,x_{2}+t\,\Delta}{j_{m+4}}.
\]
Since $\epsilon_{m}\,y_{2}\in\Z$, it follows that
$n_{m+4}\,x_{2}\equiv-t\,\Delta\pmod{j_{m+4}}$. Comparing with
the congruence~\eqref{eq:cm-congruence}, we obtain
$n_{m+4}\,x_{2}\equiv t\,n_{m+4}\,c_{m}\pmod{j_{m+4}}$, and since
$\gcd(n_{m+4},j_{m+4})=1$, we obtain
$x_{2}\equiv t\,c_{m}\pmod{j_{m+4}}$. Hence
$x_{2}=t\,c_{m}+j_{m+4}\,q$ for some $q\in\Z$, and therefore
\[
\xi=(j_{m+2}\,t)\,\alpha_{k}+(t\,c_{m}+j_{m+4}\,q)\,\alpha_{k}^{2}
=t\,\xi_{1,k}+q\,\xi_{2,k},
\]
which completes the proof.
\end{proof}

Set $\xi_{\mathbb{S}^{k}}:=j_{m}\,\beta_{k}\in\pi_{k}(BO)$; by
\cite[Theorem~3]{groth}, together with \cite[Theorem~1.1]{Qui71},
which settles the ambiguity in the case
$k\equiv 0\pmod 8$, the class $\xi_{\mathbb{S}^{k}}$ generates
$\operatorname{Ker}\pi_{k}(J)\cong\Z$.
\begin{corollary}\label{cor:free-basis}
Let $k=4m$ with $m\geq 1$, and let $\xi_{1,k}$, $\xi_{2,k}$ be as in
Equation~\eqref{kernel_generator}, so that $\{\xi_{1,k},\xi_{2,k}\}$ is
a basis of $\operatorname{Ker}[\Sigma^{k}\OP^{2},J]$ by
Theorem~\ref{prop:explicit-gens}. Then there exist classes
$\widetilde{\xi_{1,k}},\widetilde{\xi_{2,k}}\in[\Sigma^{k}\OP^{2},G/O]$
and $\widetilde{\xi_{\mathbb{S}^{k}}}\in\pi_{k}(G/O)$, both groups
regarded as subgroups of $[X_{k},G/O]$
via the splitting~\eqref{eq:N-PT-splitting}, with
\[
r_{*}(\widetilde{\xi_{1,k}})=\xi_{1,k},\qquad
r_{*}(\widetilde{\xi_{2,k}})=\xi_{2,k},\qquad
r_{*}(\widetilde{\xi_{\mathbb{S}^{k}}})=\xi_{\mathbb{S}^{k}},
\]
where $r\colon G/O\to BO$ denotes the canonical map. For any such choice the set
$\{\widetilde{\xi_{\mathbb{S}^{k}}},\widetilde{\xi_{1,k}},
\widetilde{\xi_{2,k}}\}$ forms a basis
of $\Ndiff_{\partial}(\OP^{2}\times\mathbb{D}^{k})_{\rm free}\cong\Z^{3}$. 
\end{corollary}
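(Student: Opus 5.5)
Existence of the lifts comes from the short exact sequence~\eqref{eq:N-SES} and its analogue for spheres. Since $\xi_{1,k},\xi_{2,k}\in\operatorname{Ker}[\Sigma^{k}\OP^{2},J]$ by Theorem~\ref{prop:explicit-gens}, surjectivity of $r_{*}\colon[\Sigma^{k}\OP^{2},G/O]\to\operatorname{Ker}[\Sigma^{k}\OP^{2},J]$ in~\eqref{eq:N-SES} gives preimages $\widetilde{\xi_{1,k}},\widetilde{\xi_{2,k}}$. Likewise, the fibre sequence $G/O\xrightarrow{r}BO\xrightarrow{J}BG$ on homotopy groups gives exactness of
\[
\pi_{k}(G)\to\pi_{k}(G/O)\xrightarrow{r_{*}}\pi_{k}(BO)\xrightarrow{J_{*}}\pi_{k}(BG).
\]
Since $\xi_{\mathbb{S}^{k}}$ generates $\operatorname{Ker}\pi_{k}(J)$, it has a preimage $\widetilde{\xi_{\mathbb{S}^{k}}}\in\pi_{k}(G/O)$. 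We then transport these classes into $[X_{k},G/O]$ via the splitting~\eqref{eq:N-PT-splitting}. Because $X_{k}\simeq\Sigma^{k}\OP^{2}\vee\mathbb{S}^{k}$, this splitting is compatible with $r_{*}$: we have $[X_{k},BO]=[\Sigma^{k}\OP^{2},BO]\oplus\pi_{k}(BO)$, and $r_{*}$ acts summandwise.

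To prove the basis statement, we argue summand by summand, again using that $r_{*}$ respects the wedge decomposition. First consider $[\Sigma^{k}\OP^{2},G/O]$. By~\eqref{eq:N-SES}, $r_{*}$ has finite kernel $\operatorname{Im}\overline{\gamma_{*}}\cong T_{k}$, because $[\Sigma^{k}\OP^{2},G]$ is finite (stable homotopy of a finite complex in positive degrees). Its image is free of rank two with basis $\{\xi_{1,k},\xi_{2,k}\}$. Hence the torsion subgroup of $[\Sigma^{k}\OP^{2},G/O]$ is exactly $\ker r_{*}$, and $r_{*}$ induces an isomorphism
\[
[\Sigma^{k}\OP^{2},G/O]_{\mathrm{free}}=[\Sigma^{k}\OP^{2},G/O]/\mathrm{tors}\;\xrightarrow{\cong}\;\operatorname{Ker}[\Sigma^{k}\OP^{2},J].
\]
Any lifts of a basis therefore project to a basis of the free quotient, so $\{\widetilde{\xi_{1,k}},\widetilde{\xi_{2,k}}\}$ is a basis of the free part.

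Next consider $\pi_{k}(G/O)$. The kernel of $r_{*}$ is the image of the finite group $\pi_{k}(G)=\pi_{k}^{s}$, so it is torsion. The image of $r_{*}$ is $\operatorname{Ker}\pi_{k}(J)\cong\Z\{\xi_{\mathbb{S}^{k}}\}$. Since $\pi_{k}(G/O)\cong\Z\oplus\Theta_{k}$ with $\Theta_{k}$ its torsion subgroup (Theorem~\ref{thm:normal-inv-decomp}), the same argument shows that $r_{*}$ is an isomorphism from the free quotient onto $\Z\{\xi_{\mathbb{S}^{k}}\}$. Consequently $\widetilde{\xi_{\mathbb{S}^{k}}}$ generates $\pi_{k}(G/O)_{\mathrm{free}}$.

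Combining the two summands gives a basis of $\Ndiff_{\partial}(\OP^{2}\times\mathbb{D}^{k})_{\mathrm{free}}\cong\Z^{3}$, which is independent of the choice of lifts since any two lifts differ by torsion. The main point needing care is identifying the torsion of each summand with $\ker r_{*}$. This requires finiteness of $[\Sigma^{k}\OP^{2},G]$ and of $\pi_{k}(G)$, together with exactness at $[\,\cdot\,,G/O]$. Here all terms are groups because the sources are suspensions, and $G/O$ is an infinite loop space, so the induced maps are homomorphisms. The case $k=4$ should be checked separately via $\pi_{4}(G/O)\cong\Z$, but the same argument applies verbatim.
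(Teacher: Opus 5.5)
Your proposal is correct and follows essentially the same route as the paper. Both arguments rest on the finiteness of $[X,G]$ for a finite complex: this makes $\ker r_{*}$ exactly the torsion subgroup, so $r_{*}$ maps the free part isomorphically onto $\operatorname{Ker}\pi_{k}(J)\oplus\operatorname{Ker}[\Sigma^{k}\OP^{2},J]$, and hence any lifts of $\{\xi_{\mathbb{S}^{k}},\xi_{1,k},\xi_{2,k}\}$ form a basis. You additionally spell out two points the paper leaves implicit: the existence of the lifts, via exactness of~\eqref{eq:N-SES} and of the homotopy sequence of $G/O\to BO\to BG$, and the compatibility of $r_{*}$ with the wedge splitting.
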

\begin{proof}
Following splitting~\eqref{eq:N-PT-splitting}, the free part of the relative normal invariants decomposes as
\[
\Ndiff_{\partial}(\OP^{2}\times\mathbb{D}^{k})_{\mathrm{free}}
\cong \operatorname{Ker}\pi_{k}(J) \oplus \operatorname{Ker}[\Sigma^{k}\OP^{2},J].
\]
Note that the right-hand side of this isomorphism admits a $\Z$-basis $\{\xi_{\mathbb{S}^{k}}, \xi_{1,k}, \xi_{2,k}\}$. Since $[X, G]$ is a finite group for any finite CW complex $X$, the canonical map $r_{*}$ induces an isomorphism on the torsion-free parts. Therefore, the chosen lifts $\{\widetilde{\xi_{\mathbb{S}^{k}}}, \widetilde{\xi_{1,k}}, \widetilde{\xi_{2,k}}\}$ necessarily form a $\Z$-basis for $\Ndiff_{\partial}(\OP^{2}\times\mathbb{D}^{k})_{\mathrm{free}}$.
\end{proof}

Let $\{\widetilde{\xi_{\mathbb{S}^{k}}},\widetilde{\xi_{1,k}},
\widetilde{\xi_{2,k}}\}$ denote lifts of the chosen generators of
$\Ndiff_{\partial}(\OP^{2}\times\mathbb{D}^{k})_{\mathrm{free}}$. The subsequent computations are independent of this choice, since any
two lifts differ by a torsion element, which is annihilated by
$\sigma^{\diff}_{16+k}$.
 \section{The Surgery Obstruction} \label{sec:obstr-free}
 
In this section we determine the surgery obstruction map
$\sigma^{\diff}_{16+k}$ for $\OP^{2}\times\mathbb{D}^{k}$, where
$k\equiv 0\pmod 4$ and $k\geq 1$. We compute
its values on the $\Z$-basis of
$\Ndiff_{\partial}(\OP^{2}\times\mathbb{D}^{k})_{\mathrm{free}}$
constructed in Corollary~\ref{cor:free-basis}
(Theorem~\ref{thm:sigma-on-Sigma-kOP2}), thereby establishing the
formulas of Theorem~\ref{thm:image-obstruction}, and then exhibit a
free basis of the kernel of $\sigma^{\diff}_{16+k}$ on the free part
(Proposition~\ref{prop:kernel-sigma}).

Recall that an element of
$\Ndiff_{\partial}(\OP^{2}\times\mathbb{D}^{k})$ is represented by
a normal bordism class $[(M,\partial M),(f,\partial f)]$, where
$(f,\partial f)\colon(M,\partial M)\to
(\OP^{2}\times\mathbb{D}^{k},\,\OP^{2}\times\mathbb{S}^{k-1})$ is a
degree-one normal map restricting to a diffeomorphism on the boundary
\cite[Chapter~7]{LM24}. The surgery obstruction map $\sigma^{\diff}_{16+k}\colon
\Ndiff_{\partial}(\OP^{2}\times\mathbb{D}^{k})
\rightarrow L_{16+k}(\Z)\cong\Z$ is given by (see \cite[p.~10]{georg})
\begin{equation}\label{eq:browder-additivity}
\sigma^{\diff}_{16+k}(f,\partial f)
=\tfrac{1}{8}\bigl(\operatorname{sign}(\widehat{M})
-\operatorname{sign}(\OP^{2}\times\mathbb{S}^{k})\bigr)
=\tfrac{1}{8}\operatorname{sign}(\widehat{M}),
\end{equation}
where $\widehat{M}=M\cup_{\partial M}(\OP^{2}\times\mathbb{D}^{k})$,
and the second equality holds since
$\operatorname{sign}(\OP^{2}\times\mathbb{S}^{k})
=\operatorname{sign}(\OP^{2})\cdot\operatorname{sign}(\mathbb{S}^{k})
=0$.

In view of the isomorphism
$\Ndiff_{\partial}(\OP^{2}\times\mathbb{D}^{k})\cong[X_{k},G/O]$, an element of
$\Ndiff_{\partial}(\OP^{2}\times\mathbb{D}^{k})$ may be
represented by a pair $[\xi,t]$, where $\xi$ is a rank-$N$ vector
bundle over $X_{k}$, for some large $N$, and
$t\colon S(\xi)\to X_{k}\times\mathbb{S}^{N-1}$ is a fibre homotopy
trivialization of the associated sphere bundle. Under this
isomorphism, $[\xi,t]$ corresponds to the normal bordism class of a
degree-one normal map
\begin{equation}\label{normalsquare}
\begin{tikzcd}
\nu_{M} \arrow[r, "\bar{f}"] \arrow[d]
& \nu_{\OP^{2}\times\mathbb{D}^{k}}\oplus \xi \arrow[d] \\
(M,\partial M) \arrow[r, "{(f,\,\partial f)}"']
& (\OP^{2}\times\mathbb{D}^{k},\,\OP^{2}\times\mathbb{S}^{k-1})
\end{tikzcd}
\end{equation}
such that $\partial f$ is a diffeomorphism, covered by an isomorphism
of bundles. Here $\nu_{(-)}$ denotes the stable normal bundle. 
Gluing the normal map
$(\mathrm{id}_{\OP^{2}\times\mathbb{D}^{k}},
\mathrm{id}_{\OP^{2}\times\mathbb{S}^{k-1}})$ along the boundary, we
obtain a degree-one normal map
$\widehat{f}\colon\widehat{M}\to\OP^{2}\times\mathbb{S}^{k}$ covered by a
bundle map
$\nu_{\widehat{M}}\to\nu_{\OP^{2}\times\mathbb{S}^{k}}\oplus\pi^{*}\xi$,
where $\pi\colon\OP^{2}\times\mathbb{S}^{k}\to X_{k}$ collapses $\OP^2\times\{pt\}$ to a point. Since
$\mathcal{L}(T\widehat{M})\cdot\mathcal{L}(\nu_{\widehat{M}})=1$, the
multiplicativity and naturality of the total Hirzebruch $L$-class,
together with the fact that $\nu_{\OP^{2}\times\mathbb{S}^{k}}$ is a
stable inverse of $T(\OP^{2}\times\mathbb{S}^{k})$, yield \[\mathcal{L}(\widehat{M})
=\widehat{f}^{*}\Bigl(\mathcal{L}(\OP^{2}\times\mathbb{S}^{k})
\cdot\pi^{*}\bigl(\mathcal{L}(\xi)^{-1}\bigr)\Bigr).\]
Hence, by the Hirzebruch signature theorem and the fact that $\widehat{f}$
has degree one,
\begin{equation}\label{eq:sign-hatM}
\operatorname{sign}(\widehat{M})
=\bigl\langle\mathcal{L}(\widehat{M}),[\widehat{M}]\bigr\rangle
=\Bigl\langle\mathcal{L}(\OP^{2}\times\mathbb{S}^{k})
\cdot\pi^{*}\bigl(\mathcal{L}(\xi)^{-1}\bigr),
[\OP^{2}\times\mathbb{S}^{k}]\Bigr\rangle.
\end{equation}
Since $T(\OP^{2}\times\mathbb{S}^{k})\cong
\mathrm{proj}_{1}^{*}\,T\OP^{2}\oplus\mathrm{proj}_{2}^{*}\,T\mathbb{S}^{k}$,
where $\mathrm{proj}_{i}$ denotes the projection onto the $i$-th
factor, and $\mathcal{L}(T\mathbb{S}^{k})=1$ as $T\mathbb{S}^{k}$ is
stably trivial,~\eqref{eq:sign-hatM} reduces to
\begin{equation}\label{eq:sign-reduced}
\operatorname{sign}(\widehat{M})
=\Bigl\langle\mathrm{proj}_{1}^{*}\,\mathcal{L}(\OP^{2})
\cdot\pi^{*}\bigl(\mathcal{L}(\xi)^{-1}\bigr),
[\OP^{2}\times\mathbb{S}^{k}]\Bigr\rangle.
\end{equation}
Since $X_{k}\simeq\Sigma^{k}\OP^{2}\vee\mathbb{S}^{k}$, the
cup product of any two classes of positive degree in
$\widetilde{H}^{*}(X_{k};\Q)$ vanishes. Writing
$\mathcal{L}(\xi)=1+x$ with $x$ of positive degree, the identity
$\mathcal{L}(\xi)\,\mathcal{L}(\xi)^{-1}=1$ forces
$\mathcal{L}(\xi)^{-1}=1-x=2-\mathcal{L}(\xi)$. Combining~\eqref{eq:sign-reduced} with~\eqref{eq:browder-additivity}, we obtain the following lemma.
\begin{lemma}\label{lem:sigma-formula}
Let $k\equiv 0 \pmod 4$ with $k\ge 1$. 
Then the relative surgery obstruction map
$\sigma^{\diff}_{16+k}\colon [X_{k},\,G/O] \rightarrow L_{16+k}(\mathbb Z)\cong \mathbb Z$ is given by
\begin{equation*} 
\sigma^{\diff}_{16+k}([\xi,t])
=
-\frac18
\Bigl\langle
\pi^*\!\bigl(\mathcal L(\xi)-2\bigr)\,
\mathrm{proj}_1^*\mathcal L(\OP^2),
[\OP^2\times S^k]
\Bigr\rangle .
\end{equation*}
Consequently, $\sigma^{\diff}_{16+k}$ depends only on the image of
$[X_{k},G/O]
\xrightarrow{\,r_*\,}
[X_{k},\,BO]
\xrightarrow{\,\pi^*\,}
[\OP^2\times \mathbb{S}^k,BO]$,
where $r\colon G/O\to BO$ denotes the canonical map.
\end{lemma}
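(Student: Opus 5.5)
The lemma is essentially a repackaging of the computation carried out just before its statement, so the proof will assemble the three ingredients already in place. First, by \eqref{eq:browder-additivity}, $\sigma^{\diff}_{16+k}([\xi,t])=\tfrac18\operatorname{sign}(\widehat{M})$, where $\widehat{M}$ is obtained from a representing normal map \eqref{normalsquare}. Second, \eqref{eq:sign-reduced} expresses $\operatorname{sign}(\widehat{M})$ as
\[
\bigl\langle \mathrm{proj}_1^{*}\mathcal{L}(\OP^2)\cdot\pi^{*}(\mathcal{L}(\xi)^{-1}),[\OP^2\times\mathbb{S}^k]\bigr\rangle .
\]
Third, substitute $\mathcal{L}(\xi)^{-1}=2-\mathcal{L}(\xi)$. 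This identity holds because all cup products of positive-degree classes in $\widetilde H^{*}(X_k;\Q)$ vanish, as $X_k\simeq\Sigma^k\OP^2\vee\mathbb{S}^k$ is a wedge of suspensions. Pulling back along the ring homomorphism $\pi^{*}$ preserves the identity, and pulling the sign out of the pairing gives the displayed formula with the factor $-\tfrac18$.

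The one step to justify with care is the derivation of \eqref{eq:sign-hatM} itself. There I would make sure the degree-one normal map $\widehat f\colon\widehat M\to\OP^2\times\mathbb{S}^k$ is really covered by a bundle map to $\nu_{\OP^2\times\mathbb{S}^k}\oplus\pi^{*}\xi$. This requires that the glued bundle on $\OP^2\times\mathbb{S}^k$ is $\pi^{*}\xi$, which holds since $\xi$ is pulled back from $X_k$ and is trivialized over the collapsed part. Given that, $\mathcal{L}(\nu_{\widehat M})=\widehat f^{*}\bigl(\mathcal{L}(\nu_{\OP^2\times\mathbb{S}^k})\,\pi^{*}\mathcal{L}(\xi)\bigr)$. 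Inverting this and using $\widehat f_{*}[\widehat M]=[\OP^2\times\mathbb{S}^k]$ gives \eqref{eq:sign-hatM}. The reduction to \eqref{eq:sign-reduced} then only uses $\mathcal{L}(T\mathbb{S}^k)=1$.

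For the final assertion, the Hirzebruch class $\mathcal{L}(\xi)$ depends only on the stable isomorphism class of $\xi$, that is, on $r_{*}[\xi,t]\in[X_k,BO]$. Moreover $\pi^{*}\mathcal{L}(\xi)=\mathcal{L}(\pi^{*}\xi)$ by naturality, so the formula depends only on $\pi^{*}r_{*}[\xi,t]\in[\OP^2\times\mathbb{S}^k,BO]$, and the fibre-homotopy trivialization $t$ does not enter. I do not expect a genuine obstacle here. The only delicate point is orientation bookkeeping in the gluing, which is fixed by requiring $f\cup\mathrm{id}$ to have degree one, as in \eqref{eq:browder-additivity}.
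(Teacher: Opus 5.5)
Your proposal is correct and follows the paper's own argument: combine \eqref{eq:browder-additivity} with \eqref{eq:sign-reduced}, use $\mathcal{L}(\xi)^{-1}=2-\mathcal{L}(\xi)$ on $X_k$ (where cup products vanish), and pull back along $\pi^{*}$. Your justification that the glued bundle is $\pi^{*}\xi$, and your remark that the formula depends only on $\pi^{*}r_{*}[\xi,t]$ via naturality of $\mathcal{L}$, spell out steps the paper leaves implicit.
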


\subsection{Evaluation of Surgery Obstruction on \texorpdfstring{$\Ndiff_{\partial}(\OP^{2}\times \mathbb{D}^{k})_{\mathrm{free}}$}{}}
In this subsection we evaluate the surgery obstruction on the
$\Z$-basis of
$\Ndiff_{\partial}(\OP^{2}\times\mathbb{D}^{k})_{\mathrm{free}}$
constructed in Corollary~\ref{cor:free-basis}, obtaining the integers
$S$, $A$ and $B$ of Theorem~\ref{thm:image-obstruction}
(Theorem~\ref{thm:sigma-on-Sigma-kOP2}). We do so by first expressing
the surgery obstruction of an arbitrary normal invariant in
$[\Sigma^{k}\OP^{2},G/O]$ in terms of the Pontryagin character
coordinates $a_{\xi}$, $b_{\xi}$ of the underlying stable bundle
$\xi$ (Theorem~\ref{thm:sigma-general-xi}); this general formula is
needed again in Section~\ref{sec:applications}.
\begin{lemma}\label{lem:L-eq-D-ph}
Let $X$ be a topological space such that
$a \smile b = 0$ for all $a,b \in H^{>0}(X;\Q)$,
and let $\eta$ be a stable real vector bundle over $X$. For each $s \ge 1$, set
\[
D_s := \frac{(-1)^{s+1}2^{2s}(2^{2s-1}-1)B_s}{2s},
\]
where $B_s$ denotes the Bernoulli numbers. Then $L_s(\eta)=D_s\,\mathrm{ph}_{2s}(\eta)$
in $H^{4s}(X;\Q)$, where $L_s(\eta)$ and $\mathrm{ph}_{2s}(\eta)$ denote
the degree-$4s$ components of the Hirzebruch $L$-class and the Pontryagin
character $\mathrm{ph}(\eta):=\mathrm{ch}(\eta\otimes_{\R}\mathbb{C})$, respectively.
\end{lemma}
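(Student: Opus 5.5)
Because all cup products of positive-degree classes in $H^{*}(X;\Q)$ vanish, both $\mathcal{L}(\eta)$ and $\ph(\eta)$ reduce to their \emph{linear} parts in the Pontryagin classes (equivalently, in the power sums of the formal roots). The plan is to write both in terms of the power sums $\sum_i x_i^{2s}$ and compare the coefficients.

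First use the splitting principle. Formally write $p(\eta)=\prod_i(1+x_i^{2})$, as in the remark following Lemma~\ref{lem:KO0-ring}. Then
\[
\ph(\eta)=\sum_i 2\cosh x_i,
\qquad\text{so}\qquad
\ph_{2s}(\eta)=\frac{2}{(2s)!}\,\mathfrak{s}_s,
\]
where $\mathfrak{s}_s=\sum_i x_i^{2s}$ is the $s$-th Newton power sum in the $x_i^2$. This is a universal polynomial in $p_1,\dots,p_s$. By Newton's identities, $\mathfrak{s}_s=(-1)^{s+1}s\,p_s+(\text{decomposables})$. Under the hypothesis on $X$ every decomposable monomial vanishes, so $\ph_{2s}(\eta)=\frac{2(-1)^{s+1}s}{(2s)!}\,p_s(\eta)$.

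Next treat the $L$-class. Take logarithms:
\[
\log\mathcal{L}(\eta)=\sum_i\log\frac{x_i}{\tanh x_i}.
\]
The right-hand side is a linear combination of the power sums $\mathfrak{s}_s$. On $X$ we have $\mathcal{L}(\eta)=1+y$ with $y^2=0$, so $\log\mathcal{L}(\eta)=\mathcal{L}(\eta)-1$. Hence
\[
L_s(\eta)=\ell_s\,\mathfrak{s}_s,
\]
where $\ell_s$ is the coefficient of $z^{2s}$ in $\log(z/\tanh z)$. This parallels the treatment of $\sh$ in~\eqref{shcharacter}. Combining the two steps gives $L_s(\eta)=\ell_s\frac{(2s)!}{2}\ph_{2s}(\eta)$, so it remains to check that $D_s=\ell_s(2s)!/2$.

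The main computation, and the only real obstacle, is to identify $\ell_s$. Differentiate:
\[
\frac{d}{dz}\log\frac{z}{\tanh z}=\frac1z-\frac{1}{\sinh z\cosh z}=\frac1z-\frac{2}{\sinh 2z}.
\]
Then use the classical expansion
\[
\frac{1}{\sinh w}=\frac1w+\sum_{s\ge1}(-1)^{s}\frac{2(2^{2s-1}-1)B_s}{(2s)!}\,w^{2s-1},
\]
with $B_s>0$ in the paper's convention.
Substituting $w=2z$ and integrating gives
\[
\ell_s=(-1)^{s+1}\frac{2^{2s}(2^{2s-1}-1)B_s}{s\,(2s)!}.
\]
Multiplying by $(2s)!/2$ yields exactly $D_s=\frac{(-1)^{s+1}2^{2s}(2^{2s-1}-1)B_s}{2s}$.

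I would sanity-check the signs and the factor $1/s$ against $s=1$: there $L_1=p_1/3$, and $\ph_2=p_1$ (the power sum is $\sum_i x_i^2$, giving $2\cdot\frac{1}{2}p_1$), so $D_1$ should equal $\tfrac13$. This agrees with the formula: $\frac{4\cdot1\cdot\frac16}{2}=\frac13$. As a further check, $s=2$ should give $D_2=-\tfrac{7}{15}\cdot\ldots$, which one verifies against $L_2=(7p_2-p_1^2)/45$ together with $\ph_4=-p_2/6$ modulo decomposables.
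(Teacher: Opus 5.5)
Your proof is correct, and it reaches the formula by a somewhat different route from the paper. The paper uses Newton's identities to get $\ph_{2s}(\eta)=\frac{(-1)^{s+1}}{(2s-1)!}\,p_s(\eta)$ once decomposables are killed. It then quotes the coefficient $\frac{2^{2s}(2^{2s-1}-1)B_s}{(2s)!}$ of $p_s$ in $L_s$ from Milnor--Stasheff (Problem 19-C) and divides.

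You instead compare the two classes directly through the power sums $\mathfrak{s}_s=\sum_i x_i^{2s}$. Both $\ph_{2s}=\frac{2}{(2s)!}\mathfrak{s}_s$ and $\log\mathcal{L}=\sum_s\ell_s\mathfrak{s}_s$ hold universally. So you actually obtain the universal identity $\log\mathcal{L}(\eta)=\sum_s D_s\,\ph_{2s}(\eta)$, the exact analogue of Adams' formula \eqref{shcharacter} for $\sh$. The cup-product hypothesis then enters only once, through $\log(1+y)=y$. In particular, your Newton's-identity step expressing $\ph_{2s}$ via $p_s$ is not needed for the argument; it only feeds your sanity checks.

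Your derivation of $\ell_s$ from the expansion of $1/\sinh w$ is correct. It essentially reproves the cited Milnor--Stasheff coefficient, whose standard solution is the same logarithmic-derivative computation. So your argument is self-contained at the cost of a few lines, while the paper's is shorter by citation.

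One cosmetic point: in your $s=2$ check the value is $D_2=-\tfrac{14}{15}$, which indeed equals $\tfrac{7}{45}\big/\bigl(-\tfrac16\bigr)$.
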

\begin{proof}
By Newton’s identities \cite[Page~92]{Hir66} and the assumption on cup-product vanishing, all mixed products in the Chern character expansion vanish. Hence the degree-$4s$ component satisfies
$\mathrm{ch}_{2s}(\eta\otimes\mathbb{C})
=
-\tfrac{1}{(2s-1)!}\,c_{2s}(\eta\otimes \mathbb{C})$.
It follows from $p_s(\eta)=(-1)^s c_{2s}(\eta\otimes \mathbb{C})$, that
\begin{equation}\label{phchar}
\mathrm{ph}_{2s}(\eta)
=
\mathrm{ch}_{2s}(\eta\otimes \mathbb{C})
=
\frac{(-1)^{s+1}}{(2s-1)!}\,p_s(\eta).
\end{equation}
Under the assumption that all cup products of positive-degree classes vanish, all decomposable terms in the Hirzebruch $L$-polynomial vanish, so only the top Pontryagin class $p_s(\eta)$ contributes to $L_s(\eta)$. The stated formula then follows from the standard expression for the $L$-polynomial (see \cite[Problem 19-C]{milnor}).
\end{proof}
\begin{theorem}\label{thm:sigma-general-xi}
Let $k=4m$ with $m\geq 1$, and let $\xi \in [\Sigma^k\OP^2,BO] \subset [X_{k},BO]$.
Let $a_\xi,b_\xi\in \Q$ be determined by
\begin{equation}\label{eq:ph-coords}
\widetilde{\ph}(\xi)
=
a_\xi\,\sigma^k u
+
b_\xi\,\sigma^k u^2
\in
\widetilde H^*(\Sigma^k\OP^2;\Q),
\end{equation}
where
$\widetilde{\ph}$ 
 denotes the reduced
Pontryagin character.
Then, for any lift $[\widetilde{\xi}]$ of $\xi$ to
$[\Sigma^k\OP^2,G/O]$, the relative surgery obstruction is
\begin{equation*}
\sigma^{\diff}_{16+k}([\widetilde{\xi}])
=
- \frac{1}{8}
\left(
\frac{14D_{m+2}}{15}\,a_\xi
+
D_{m+4}\,b_\xi
\right).
\end{equation*}
\end{theorem}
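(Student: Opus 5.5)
Our starting point is Lemma~\ref{lem:sigma-formula}. It gives $\sigma^{\diff}_{16+k}([\widetilde\xi])=-\frac18\langle \pi^*(\mathcal L(\xi)-2)\,\mathrm{proj}_1^*\mathcal L(\OP^2),[\OP^2\times\mathbb S^k]\rangle$ and depends only on $r_*[\widetilde\xi]=\xi$; hence any lift gives the same value. We write $\mathcal L(\xi)-2=-1+\widetilde{\mathcal L}(\xi)$, where $\widetilde{\mathcal L}(\xi)$ is the positive-degree part.

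\textbf{The constant term.} The $-1$ contributes $-\langle \mathrm{proj}_1^*\mathcal L(\OP^2),[\OP^2\times\mathbb S^k]\rangle$. This vanishes because $\mathrm{proj}_1^*$ lands in classes with no $\mathbb S^k$-component, so there is no degree-$(16+k)$ part.

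\textbf{The positive-degree part.} Since $\widetilde H^*(\Sigma^k\OP^2;\Q)$ has trivial cup products, Lemma~\ref{lem:L-eq-D-ph} applies. Only the degrees $k+8=4(m+2)$ and $k+16=4(m+4)$ occur, so
\[
\widetilde{\mathcal L}(\xi)=D_{m+2}\,a_\xi\,\sigma^k u+D_{m+4}\,b_\xi\,\sigma^k u^2 .
\]
We then pull back along $\pi\colon \OP^2\times\mathbb S^k\to X_k$. Under the identification $\OP^2\times\mathbb S^k/\OP^2\times\{pt\}\to\Sigma^k\OP^2$, the class $\pi^*\sigma^k x$ corresponds to $x\times\iota_k$; the sign/orientation is fixed by the convention $\sigma^k x=x\wedge\iota_k$. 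This gives $\pi^*\sigma^k u=u\times\iota_k$ and $\pi^*\sigma^k u^2=u^2\times\iota_k$.

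Multiplying by $\mathrm{proj}_1^*\mathcal L(\OP^2)=1+L_2(\OP^2)\,+L_4(\OP^2)$ (written as $1+\ell_1u+\ell_2u^2$) and extracting the top-degree term gives
\[
\bigl(D_{m+2}a_\xi\,\ell_1+D_{m+4}b_\xi\bigr)\,u^2\times\iota_k .
\]
Evaluating on $[\OP^2\times\mathbb S^k]$, with $\langle u^2,[\OP^2]\rangle=1$, yields $D_{m+2}a_\xi\ell_1+D_{m+4}b_\xi$.

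\textbf{Computing $\ell_1$.} It remains to show $\ell_1=14/15$, where $L_2(\OP^2)=\ell_1 u$. With $p_1=0$ we have $L_2=\frac{1}{45}(7p_2-p_1^2)=\frac{7}{45}p_2$. Using $p_2(\OP^2)=6u$ from \cite[Theorem~19.4]{borel}, as in the Remark after Lemma~\ref{lem:KO0-ring}, this gives $\frac{42}{45}u=\frac{14}{15}u$.

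Substituting into Lemma~\ref{lem:sigma-formula} gives the claimed formula. As a consistency check, $\operatorname{sign}(\OP^2)=1$ forces $\ell_2=1$ under this normalization; $\ell_2$ does not enter the final answer, since $u^2\cdot u=0$.

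\textbf{Main obstacle.} The main risk is the sign and orientation bookkeeping. Three conventions must be made compatible:
\begin{itemize}
\item the collapse map $\pi$ versus the suspension isomorphism $\sigma^k$;
\item the orientation $[\OP^2\times\mathbb S^k]=[\OP^2]\times[\mathbb S^k]$, with $\langle\iota_k,[\mathbb S^k]\rangle=1$;
\item the degree-one convention for $\widehat M$.
\end{itemize}
To settle these I would first check that the cross product $u^2\times\iota_k$ evaluates to $+1$. Since $u^2$ has even degree, no graded-commutativity sign arises.
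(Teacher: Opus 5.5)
Your proposal is correct and follows the paper's proof. Like the paper, you apply Lemma~\ref{lem:L-eq-D-ph} to write $\pi^{*}(\mathcal{L}(\xi)-2)=-1+D_{m+2}a_\xi(u\times\iota_k)+D_{m+4}b_\xi(u^2\times\iota_k)$, multiply by $\mathcal{L}(\OP^2)=1+\tfrac{14}{15}u+u^2$ (with $\tfrac{14}{15}$ coming from $L_2=\tfrac{7}{45}p_2$ and $p_2=6u$), and read off the coefficient of $u^2\times\iota_k$ in Lemma~\ref{lem:sigma-formula}.
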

\begin{proof}
By Lemma~\ref{lem:L-eq-D-ph} together with~\eqref{eq:ph-coords}, and
using $\pi^{*}(\sigma^{k}u)=u\times\iota_{k}$ and
$\pi^{*}(\sigma^{k}u^{2})=u^{2}\times\iota_{k}$, we have
$\pi^{*}\bigl(\mathcal{L}(\xi)-2\bigr)
=-1+D_{m+2}\,a_{\xi}\,(u\times\iota_{k})
+D_{m+4}\,b_{\xi}\,(u^{2}\times\iota_{k})$.

Moreover, $p_{2}(\OP^{2})=6u$ and $p_{4}(\OP^{2})=39u^{2}$ by
\cite[Theorem~19.4]{borel}, while $p_{1}(\OP^{2})$ and $p_{3}(\OP^{2})$
vanish for degree reasons; hence \cite[p.~225]{milnor} gives
$\mathcal{L}(\OP^{2})=1+\tfrac{14}{15}\,u+u^{2}$. Since $u^{3}=0$ in
$H^{*}(\OP^{2};\Q)$, we obtain
\begin{align*}
\pi^{*}\bigl(\mathcal{L}(\xi)-2\bigr)\cdot
\mathrm{proj}_{1}^{*}\,\mathcal{L}(\OP^{2})
={}&-\bigl(1+\tfrac{14}{15}(u\times1)+(u^{2}\times1)\bigr)
+D_{m+2}\,a_{\xi}\,(u\times\iota_{k})\\
&+\bigl(\tfrac{14}{15}\,D_{m+2}\,a_{\xi}
+D_{m+4}\,b_{\xi}\bigr)(u^{2}\times\iota_{k}).
\end{align*}
Only the component of degree $16+k$ pairs non-trivially with
$[\OP^{2}\times\mathbb{S}^{k}]$, and
$\bigl\langle u^{2}\times\iota_{k},
[\OP^{2}\times\mathbb{S}^{k}]\bigr\rangle=1$. Now, the result follows from Lemma~\ref{lem:sigma-formula}.
\end{proof}
\begin{theorem}\label{thm:sigma-on-Sigma-kOP2}
Let $k=4m$ with $m\geq 1$.
\begin{itemize}
\item[(a)] Let $\widetilde{\xi_{1,k}}$ and $\widetilde{\xi_{2,k}}$ be be as in Corollary~\ref{cor:free-basis}.
Then the restriction of relative surgery obstruction $\sigma^{\rm DIFF}_{16+k}$ to $[\Sigma^{k}\OP^{2},\,G/O]\subseteq [X_{k}, G/O]$ is given by 
\begin{align}
\sigma^{\diff}_{16+k}(\widetilde{\xi_{1,k}})
   &= - \,\frac{\kappa_m}{8}\!\left[
      \left(\frac{14\,D_{m+2}}{15}
      + \frac{D_{m+4}}{240}\right) j_{m+2}
      + D_{m+4}\,c_m
   \right], \label{eq:sigma-xi1}\\[4pt]
\sigma^{\diff}_{16+k}(\widetilde{\xi_{2,k}})
   &= - \,\frac{\kappa_m}{8}\,D_{m+4}\,j_{m+4}. \label{eq:sigma-xi2}
\end{align}
\item[(b)]  Let $\widetilde{\xi_{\mathbb{S}^{k}}}$ be as in Corollary~\ref{cor:free-basis}. Then the restriction of relative surgery obstruction $\sigma^{\rm DIFF}_{16+k}$ to $\pi_{k}(G/O)\subseteq [X_{k}, G/O]$ is given by 
\begin{equation}\label{eq:sigma-xi-S}
   \sigma^{\diff}_{16+k}(\widetilde\xi_{\mathbb{S}^{k}})
   \;=-\frac{D_{m}j_{m}\kappa_{m}}{8}\;=\;
   (-1)^m\,2^{2m-2}\,(2^{2m-1} - 1)\,n_m\,\kappa_m ,
\end{equation}
\end{itemize}
where $D_{s}$, $n_{m}$ and $\kappa_{m}$ are as in Lemma~\ref{lem:L-eq-D-ph}, Theorem~\ref{prop:explicit-gens}
and~\eqref{eq:kappa-def}, respectively.
\end{theorem}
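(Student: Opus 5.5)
Part (a) follows from Theorem~\ref{thm:sigma-general-xi}, which already expresses $\sigma^{\diff}_{16+k}$ of any lift of $\xi\in[\Sigma^k\OP^2,BO]$ through the Pontryagin character coordinates $a_\xi,b_\xi$ of~\eqref{eq:ph-coords}. It therefore suffices to compute these coordinates for $\xi_{1,k}$ and $\xi_{2,k}$. By~\eqref{eq:ph-alpha-k} and~\eqref{eq:ph-alpha2-k}, linearity of $\ph$ gives, for $\xi=x_1\alpha_k+x_2\alpha_k^2$,
\[
a_\xi=\kappa_m x_1,\qquad b_\xi=\kappa_m\Bigl(\tfrac{x_1}{240}+x_2\Bigr).
\]
For $\xi_{1,k}$, with $(x_1,x_2)=(j_{m+2},c_m)$ from~\eqref{kernel_generator}, substitution into Theorem~\ref{thm:sigma-general-xi} gives
\[
-\tfrac{\kappa_m}{8}\Bigl[\tfrac{14}{15}D_{m+2}j_{m+2}+D_{m+4}\bigl(\tfrac{j_{m+2}}{240}+c_m\bigr)\Bigr],
\]
which regroups to~\eqref{eq:sigma-xi1}. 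For $\xi_{2,k}$, with $(x_1,x_2)=(0,j_{m+4})$, we get $a=0$ and $b=\kappa_m j_{m+4}$, which yields~\eqref{eq:sigma-xi2}. The only care needed is that $\widetilde{\xi_{i,k}}$ lies in the summand $[\Sigma^k\OP^2,G/O]$, so that its image in $[X_k,BO]$ is $\xi_{i,k}$ pulled back along the collapse $X_k\to\Sigma^k\OP^2$. This is exactly the situation of Theorem~\ref{thm:sigma-general-xi}.

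For (b), the sphere summand is not covered by Theorem~\ref{thm:sigma-general-xi}, so I would redo the computation of Lemma~\ref{lem:sigma-formula} directly. Here $\xi=\xi_{\mathbb S^k}=j_m\beta_k$ lives on the $\mathbb S^k$ wedge summand, and $\pi^*$ sends $\iota_k\in H^k(\mathbb S^k)$ to $1\times\iota_k$. Since $c(\beta_k)=\kappa_m b_U^{2m}$ and $\ch(b_U^{2m})=\iota_k$, we have $\ph(\xi)=\kappa_m j_m\iota_k$. Lemma~\ref{lem:L-eq-D-ph} applies because the sphere has trivial cup products, and it gives $\mathcal L(\xi)=1+D_m\kappa_m j_m\iota_k$.

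Then
\[
\pi^*(\mathcal L(\xi)-2)\cdot\mathrm{proj}_1^*\mathcal L(\OP^2)=\bigl(-1+D_m\kappa_mj_m(1\times\iota_k)\bigr)\Bigl(1+\tfrac{14}{15}u+u^2\Bigr),
\]
whose top-degree part is $D_m\kappa_mj_m\,(u^2\times\iota_k)$. This uses $\mathcal L(\OP^2)=1+\tfrac{14}{15}u+u^2$ from the proof of Theorem~\ref{thm:sigma-general-xi}, and is where the signature $1$ of $\OP^2$ enters. Lemma~\ref{lem:sigma-formula} then gives $-\tfrac{1}{8}D_mj_m\kappa_m$.

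Finally I would simplify arithmetically. Write $B_m/(4m)=n_m/j_m$, so that $B_m j_m=4m\,n_m$. Then
\[
D_mj_m=\frac{(-1)^{m+1}2^{2m}(2^{2m-1}-1)\,4m\,n_m}{2m}=(-1)^{m+1}2^{2m+1}(2^{2m-1}-1)\,n_m,
\]
and dividing by $-8$ gives $(-1)^m2^{2m-2}(2^{2m-1}-1)n_m\kappa_m$, as in~\eqref{eq:sigma-xi-S}.

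I expect the main obstacle to be bookkeeping of conventions rather than anything conceptual. One point is matching the Bernoulli sign convention in $D_s$ and in $a_{2s}/2=(-1)^{s-1}n_s/j_s$. The other is checking that the orientation conventions for $\pi^*$ and $\langle u^2\times\iota_k,[\OP^2\times\mathbb S^k]\rangle=1$ carry over verbatim to the sphere summand.
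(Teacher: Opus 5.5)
Your proposal is correct and follows essentially the same route as the paper. For (a) you read off $(a_\xi,b_\xi)$ from \eqref{eq:ph-alpha-k} and \eqref{eq:ph-alpha2-k} and apply Theorem~\ref{thm:sigma-general-xi}; for (b) you use $\widetilde{\ph}(\xi_{\mathbb{S}^{k}})=\kappa_m j_m\iota_k$ with Lemma~\ref{lem:sigma-formula}. Your explicit check of the closed form in \eqref{eq:sigma-xi-S} via $B_m j_m=4m\,n_m$ fills in a simplification the paper leaves implicit.
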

\begin{proof}
By~\eqref{eq:ph-alpha-k} and~\eqref{eq:ph-alpha2-k}, the
reduced Pontryagin characters of the generators $\xi_{1,k}$, $\xi_{2,k}$
of Theorem~\ref{prop:explicit-gens} are
\[
\widetilde{\ph}(\xi_{1,k})
=\kappa_{m}\Bigl(j_{m+2}\,\sigma^{k}u
+\bigl(\tfrac{j_{m+2}}{240}+c_{m}\bigr)\sigma^{k}u^{2}\Bigr),
\qquad
\widetilde{\ph}(\xi_{2,k})
=\kappa_{m}\,j_{m+4}\,\sigma^{k}u^{2}.
\]
Hence, with respect to the coordinates in~\eqref{eq:ph-coords},
$(a_{\xi_{1,k}},b_{\xi_{1,k}})
=\bigl(\kappa_{m}j_{m+2},\,
\kappa_{m}(\tfrac{j_{m+2}}{240}+c_{m})\bigr)$ and
$(a_{\xi_{2,k}},b_{\xi_{2,k}})=(0,\,\kappa_{m}j_{m+4})$. Part~(a) now
follows from Theorem~\ref{thm:sigma-general-xi}.

For part~(b), since $c(\beta_{k})=\kappa_{m}\,b_{U}^{2m}$ and
$\ch(b_{U}^{2m})=\iota_{k}$, we obtain
$\widetilde{\ph}(\beta_{k})=\kappa_{m}\,\iota_{k}$; hence
the bundle ${\xi}_{\mathbb{S}^{k}}=j_{m}\beta_{k}$ satisfies
\[
\widetilde{\ph}(\xi_{\mathbb{S}^{k}})=j_{m}\,\kappa_{m}\,\iota_{k}.
\]
and
Lemma~\ref{lem:L-eq-D-ph} yields 
\[
\pi^{*}\bigl(\mathcal{L}(\xi_{\mathbb{S}^{k}})-2\bigr)
=-1+D_{m}\,j_{m}\,\kappa_{m}\,(1\times\iota_{k}).\]
Since the coefficient of $u^{2}$ in $\mathcal{L}(\OP^{2})$ equals
$1$, part~(b) now follows from Lemma~\ref{lem:sigma-formula}.
\end{proof}

\subsection{The kernel of the relative surgery obstruction map}
By exactness of~\eqref{eq:surgery-even}, the structure set
$\Sdiff_{\partial}(\OP^{2}\times\mathbb{D}^{k})$ is the kernel of
$\sigma^{\diff}_{16+k}$. We therefore conclude the section by
exhibiting a free basis
$\{\widetilde{\eta_{1,k}},\widetilde{\eta_{2,k}}\}$ of this kernel on the
free part (Proposition~\ref{prop:kernel-sigma}); these classes appear
in Theorem~\ref{thm:main-structure} and in the proof of
Corollary~\ref{cor:infinitely-many}.
\begin{proposition}\label{prop:kernel-sigma}
Let $k=4m$ with $m\geq 1$, and let
$S_{m}=\sigma^{\diff}_{16+k}(\widetilde{\xi_{\mathbb{S}^{k}}})$,
$A_{m}=\sigma^{\diff}_{16+k}(\widetilde{\xi_{1,k}})$ and
$U_{m}=\sigma^{\diff}_{16+k}(\widetilde{\xi_{2,k}})$ be as in
Theorem~\ref{thm:image-obstruction}. Then:
\begin{enumerate}
\item[\textup{(a)}] $\ker\bigl(\sigma^{\diff}_{16+k}\big|_{\mathrm{free}}\bigr)$ is a free abelian group of rank $2$.

\item[\textup{(b)}] Set $d_{m}:=\gcd(A_{m},U_{m})$, so that
$g_{m}=\gcd(S_{m},A_{m},U_{m})=\gcd(S_{m},d_{m})$, 
and fix $\lambda_{m},\mu_{m}\in\Z$ with
$\lambda_{m} A_{m}+\mu_{m} U_{m}=d_{m}$. Then the elements
\[
  \widetilde{\eta_{1,k}}:=\frac{U_{m}}{d_{m}}\,\widetilde{\xi_{1,k}}
        -\frac{A_{m}}{d_{m}}\,\widetilde{\xi_{2,k}},
  \qquad
  \widetilde{\eta_{2,k}}:=\frac{d_{m}}{g_{m}}\,\widetilde{\xi_{\mathbb{S}^{k}}}
         -\frac{S_{m}\lambda_{m}}{g_{m}}\,\widetilde{\xi_{1,k}}
         -\frac{S_{m}\mu_{m}}{g_{m}}\,\widetilde{\xi_{2,k}}
\]
form a $\Z$-basis of $\ker\bigl(\sigma^{\diff}_{16+k}\big|_{\mathrm{free}}\bigr)$.
\end{enumerate}
Hence, under the splitting~\eqref{eq:N-PT-splitting} and in view of
Theorem~\ref{thm:normal-inv-decomp},
\[
\operatorname{Ker}\sigma^{\diff}_{16+k}=
\Z\{\widetilde{\eta_{1,k}}\}\oplus\Z\{\widetilde{\eta_{2,k}}\}
\oplus\operatorname{Im}\overline{\gamma_{*}}
\oplus\operatorname{Im}\eta^{\diff}_{\mathbb{S}^{k}}\;\cong\;\Z^{2}\oplus T_{k}\oplus\Theta_{k},
\]
where the last two summands form the torsion subgroup and the monomorphisms
$\overline{\gamma_{*}}$ and $\eta^{\diff}_{\mathbb{S}^{k}}$ identifying them
with $T_{k}$ and $\Theta_{k}$ respectively.
\end{proposition}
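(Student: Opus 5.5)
The proof reduces to linear algebra over $\Z$, carried out in the coordinates given by the basis $\{\widetilde{\xi_{\mathbb{S}^{k}}},\widetilde{\xi_{1,k}},\widetilde{\xi_{2,k}}\}$ of Corollary~\ref{cor:free-basis}. In these coordinates, $\sigma^{\diff}_{16+k}|_{\mathrm{free}}$ becomes the homomorphism $\Z^{3}\to\Z$, $(x,y,z)\mapsto S_{m}x+A_{m}y+U_{m}z$, with the values of Theorem~\ref{thm:sigma-on-Sigma-kOP2}. The first task is to check that this map is nonzero. By Theorem~\ref{thm:sigma-on-Sigma-kOP2}(b), $S_{m}=(-1)^{m}2^{2m-2}(2^{2m-1}-1)n_{m}\kappa_{m}$. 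Here $n_{m}\neq0$, since $B_{m}>0$, and $2^{2m-1}-1\neq0$ because $m\ge1$. So the map is nonzero, and its kernel is a subgroup of $\Z^{3}$ of rank $2$. It is free, being a subgroup of a free group. This proves (a). We also record that $U_{m}\neq0$, since $D_{m+4}\neq0$. Hence $d_{m}\neq0$ and $g_{m}\neq0$, so all the quotients appearing in (b) make sense. They are integers because $d_{m}\mid A_{m},U_{m}$ and $g_{m}\mid S_{m},d_{m}$.

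For (b), membership in the kernel is a direct substitution. For $\widetilde{\eta_{1,k}}$ we get $(U_{m}A_{m}-A_{m}U_{m})/d_{m}=0$. For $\widetilde{\eta_{2,k}}$ we get $\bigl(d_{m}S_{m}-S_{m}(\lambda_{m}A_{m}+\mu_{m}U_{m})\bigr)/g_{m}=0$, using $\lambda_{m}A_{m}+\mu_{m}U_{m}=d_{m}$.

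Generation is the main point. Let $(x,y,z)$ lie in the kernel, so $S_{m}x+A_{m}y+U_{m}z=0$. Then $d_{m}$ divides $S_{m}x$, so $(d_{m}/g_{m})\mid (S_{m}/g_{m})x$. Since $\gcd(d_{m}/g_{m},S_{m}/g_{m})=1$, this gives $x=(d_{m}/g_{m})t$ for some $t\in\Z$. Now subtract $t\,\widetilde{\eta_{2,k}}$. The result has first coordinate $0$ and still lies in the kernel, so it has the form $(0,y',z')$ with $A_{m}y'+U_{m}z'=0$. Put $a=A_{m}/d_{m}$ and $u=U_{m}/d_{m}$. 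These are coprime, and $ay'=-uz'$, so $u\mid y'$. Writing $y'=us$ forces $z'=-as$, and therefore $(0,y',z')=s\,\widetilde{\eta_{1,k}}$. Linear independence of the two classes follows from the rank count together with generation. It can also be seen directly: the first coordinate of $\widetilde{\eta_{2,k}}$ is $d_{m}/g_{m}\neq0$, while that of $\widetilde{\eta_{1,k}}$ is $0$, and $\widetilde{\eta_{1,k}}\neq0$ because $u\neq0$.

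For the final decomposition, Theorem~\ref{thm:image-obstruction} states that $\sigma^{\diff}_{16+k}$ kills torsion. This also follows from $L_{16+k}(\Z)\cong\Z$ being torsion-free, since any homomorphism to a torsion-free group kills torsion. Hence $\operatorname{Ker}\sigma^{\diff}_{16+k}$ contains the whole torsion subgroup $\operatorname{Im}\overline{\gamma_{*}}\oplus\operatorname{Im}\eta^{\diff}_{\mathbb{S}^{k}}$ from Theorem~\ref{thm:normal-inv-decomp}. The free part of $\Ndiff_{\partial}$ is spanned by the chosen lifts, so the kernel is the direct sum of the kernel on that span with the torsion subgroup. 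The identifications $\overline{\gamma_{*}}\colon T_{k}\cong\operatorname{Im}\overline{\gamma_{*}}$ (injective by~\eqref{eq:N-SES}) and $\eta^{\diff}_{\mathbb{S}^{k}}\colon\Theta_{k}\cong\operatorname{Im}\eta^{\diff}_{\mathbb{S}^{k}}$ then give $\Z^{2}\oplus T_{k}\oplus\Theta_{k}$.

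The only delicate step I expect is the divisibility argument that produces $x=(d_{m}/g_{m})t$, since it is what ensures that $\widetilde{\eta_{2,k}}$ is primitive rather than merely lying in the kernel.
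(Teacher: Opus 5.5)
Your proof is correct. It departs from the paper only in how generation of the kernel is proved.

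The paper takes the following route:
\begin{enumerate}
\item It writes down the $3\times 2$ coefficient matrix of $(\widetilde{\eta_{1,k}},\widetilde{\eta_{2,k}})$.
\item It notes that the $2\times 2$ minors are $-U_m/g_m$, $A_m/g_m$, $-S_m/g_m$, whose gcd is $1$.
\item From this it infers, via invariant factors, that $H_k=\langle\widetilde{\eta_{1,k}},\widetilde{\eta_{2,k}}\rangle$ is a direct summand of $\Z^3$.
\item Finally it uses the rank-$2$ statement of (a): the finite group $\ker/H_k$ embeds in the torsion-free group $\Z^3/H_k$, so it vanishes.
\end{enumerate}

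You instead eliminate coordinates directly:
\begin{enumerate}
\item Since $\gcd(d_m/g_m,S_m/g_m)=1$, the factor $d_m/g_m$ must divide the first coordinate of any kernel element.
\item After subtracting the corresponding multiple of $\widetilde{\eta_{2,k}}$, the coprimality of $A_m/d_m$ and $U_m/d_m$ finishes the argument.
\end{enumerate}

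What each route buys:
\begin{itemize}
\item Your argument is fully elementary and writes any kernel element explicitly as $t\,\widetilde{\eta_{2,k}}+s\,\widetilde{\eta_{1,k}}$.
\item Combined with your direct first-coordinate check of independence, it does not need the rank count from (a).
\item The paper's argument is shorter to state. It packages primitivity of $H_k$ into the standard minors/Smith normal form criterion, at the cost of relying on (a).
\end{itemize}

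A minor difference: you derive nonvanishing of the map from $S_m\neq 0$, whereas the paper uses $U_m\neq 0$. You do record $U_m\neq0$ as well, and that is what is actually needed for $d_m,g_m\neq 0$ and for cancelling $u$ in the last step.
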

\begin{proof}
\noindent\textit{Proof of \textup{(a)}.} We note from equation~\eqref{eq:sigma-xi2}
that $U_{m}\neq 0$. This gives that the homomorphism
$\sigma^{\diff}_{16+k}\big|_{\mathrm{free}}\colon\Z^3\to\Z$
is nonzero. Hence the image of $\sigma^{\diff}_{16+k}\big|_{\mathrm{free}}$ is
$\gcd(S_{m},A_{m},U_{m})\,\Z$ which is of rank $1$. Consequently,
$\ker\bigl(\sigma^{\diff}_{16+k}\big|_{\mathrm{free}}\bigr)$ is a free abelian
group of rank $2$.

\medskip\noindent
\textit{Proof of \textup{(b)}.} We now verify that the elements
$\widetilde{\eta_{1,k}}$ and $\widetilde{\eta_{2,k}}$ lie in
$\ker\bigl(\sigma^{\diff}_{16+k}\big|_{\mathrm{free}}\bigr)$. Using
$\lambda_{m} A_{m}+\mu_{m} U_{m}=d_{m}$ we obtain
\[
  \sigma^{\diff}_{16+k}\big|_{\mathrm{free}}(\widetilde{\eta_{1,k}})
  = \tfrac{U_{m}}{d_{m}}A_{m} - \tfrac{A_{m}}{d_{m}}U_{m} = 0,
\]
\[
  \sigma^{\diff}_{16+k}\big|_{\mathrm{free}}(\widetilde{\eta_{2,k}})
  = \tfrac{d_{m}}{g_{m}}S_{m}
    - \tfrac{S_{m}}{g_{m}}\bigl(\lambda_{m}A_{m} + \mu_{m}U_{m}\bigr)
  = \tfrac{d_{m}}{g_{m}}S_{m} - \tfrac{S_{m}}{g_{m}}d_{m} = 0,
\]
and hence $\widetilde{\eta_{1,k}},\widetilde{\eta_{2,k}} \in
\ker\bigl(\sigma^{\diff}_{16+k}\big|_{\mathrm{free}}\bigr)$.

It remains to show that $\widetilde{\eta_{1,k}}$ and $\widetilde{\eta_{2,k}}$
generate the kernel. In the basis
$\{\widetilde{\xi_{\mathbb{S}^{k}}},\widetilde{\xi_{1,k}},\widetilde{\xi_{2,k}}\}$ of
$\Ndiff_{\partial}(\OP^2\times\mathbb D^{k})_{\mathrm{free}}\cong\Z^3$
(see Corollary~\ref{cor:free-basis}), the coefficient matrix of
$(\widetilde{\eta_{1,k}},\widetilde{\eta_{2,k}})$ is
\[
  M_{m}=
  \begin{pmatrix}
    0            & d_{m}/g_{m} \\
    U_{m}/d_{m}  & -S_{m}\lambda_{m}/g_{m} \\
    -A_{m}/d_{m} & -S_{m}\mu_{m}/g_{m}
  \end{pmatrix}.
\]
The $2 \times 2$ minors of $M_{m}$ are $-\frac{U_{m}}{g_{m}}$,
$\frac{A_{m}}{g_{m}}$, and $-\frac{S_{m}}{g_{m}}$. Since
$g_{m} = \gcd(S_{m},A_{m},U_{m})$, the greatest common divisor of these minors
is $1$. As the product of the invariant factors of $M_{m}$ equals this greatest
common divisor, and the matrix $M_{m}$ has integer entries, it follows that both
invariant factors are $1$. Consequently, the subgroup
$H_{k} := \langle\widetilde{\eta_{1,k}}, \widetilde{\eta_{2,k}}\rangle$ is a direct
summand of $\Z^3$ of rank $2$, implying that the quotient $\Z^3/H_{k}$ is
torsion-free. By part (a),
$\ker\bigl(\sigma^{\diff}_{16+k}\big|_{\mathrm{free}}\bigr)$ is a free abelian
group of rank $2$. Since
$H_{k} \subseteq \ker\bigl(\sigma^{\diff}_{16+k}\big|_{\mathrm{free}}\bigr)$ and both
groups have the same rank, the quotient
$\ker\bigl(\sigma^{\diff}_{16+k}\big|_{\mathrm{free}}\bigr)/H_{k}$ is a finite group.
However, because this finite quotient naturally embeds as a subgroup of the
torsion-free group $\Z^3/H_{k}$, it must be trivial. We conclude that
$H_{k} = \ker\bigl(\sigma^{\diff}_{16+k}\big|_{\mathrm{free}}\bigr)$.

Finally, since $\sigma^{\diff}_{16+k}$ is a homomorphism into $\mathbb{Z}$, it vanishes on the torsion subgroup of $\Ndiff_{\partial}(\OP^{2}\times\mathbb{D}^{k})$. Therefore,
$\operatorname{Ker}\sigma^{\diff}_{16+k}
=
\operatorname{Ker}\bigl(\sigma^{\diff}_{16+k}\!\mid_{\mathrm{free}}\bigr)
\oplus
\operatorname{Im}\overline{\gamma_{*}}
\oplus
\operatorname{Im}\eta^{\diff}_{\mathbb{S}^{k}}$.
The first summand is determined in part~(b), while the torsion summands are identified in Theorem~\ref{thm:normal-inv-decomp}. This completes the proof.
\end{proof}

The choice of $\lambda_{m},\mu_{m}$ in Proposition~\ref{prop:kernel-sigma}\,(b)
is fixed for the remainder of the article; in particular, the class
$\widetilde{\eta_{2,k}}$, the constants $T_{m}$ and $R_{2}$
of the equation~\eqref{eq:st-constants}, and the coefficients $s$ and $t$ of 
Theorem~\ref{thm:st-pontryagin} are all understood with respect to it.
\section{Proofs of the main results}\label{sec:proofs}
In this section, we combine the computations of
Sections~\ref{sec:normal} and~\ref{sec:obstr-free} to prove
Theorems~\ref{thm:main-structure}, \ref{thm:st-pontryagin}
and~\ref{thm:image-obstruction}, and Corollary~\ref{cor:infinitely-many}.
\begin{proof}[\textbf{Proof of Theorem~\ref{thm:main-structure}}]
By the surgery exact sequence~\eqref{eq:surgery-even}, the relative smooth
normal invariant map $\eta_{\partial}^{\diff}$ identifies
$\Sdiff_\partial(\OP^2\times\mathbb{D}^k)$ with
$\operatorname{Ker}\sigma^{\diff}_{16+k}$, which is isomorphic to
$\Z^{2}\oplus T_{k}\oplus\Theta_{k}$ by
Proposition~\ref{prop:kernel-sigma}. Under these identifications, each
quadruple $(s,t,t_{1},t_{2})\in\Z^{2}\oplus T_{k}\oplus\Theta_{k}$
corresponds to the unique element of
$\Sdiff_{\partial}(\OP^{2}\times\mathbb{D}^{k})$ whose relative smooth normal
invariant is
$s\,\widetilde{\eta_{1,k}}+t\,\widetilde{\eta_{2,k}}
+\overline{\gamma_{*}}(t_{1})+\eta^{\diff}_{\mathbb{S}^{k}}(t_{2})$.
This completes the proof.
\end{proof}
Before proving Theorem~\ref{thm:st-pontryagin}, we introduce the quantities
$P_{m}$, $Q_{m}$, $T_{m}$, $R_{1}$ and $R_{2}$ occurring in its
statement. Let $m\geq1$ and $k=4m$. Recall from
Theorem~\ref{thm:image-obstruction} the integers $S_{m}$, $A_{m}$, $U_{m}$ and
$g_{m}=\gcd(S_{m},A_{m},U_{m})$, and from
Proposition~\ref{prop:kernel-sigma} the integer $d_{m}=\gcd(A_{m},U_{m})$ together with the integers $\lambda_{m},\mu_{m}$ fixed there, which satisfy
$\lambda_{m}A_{m}+\mu_{m}U_{m}=d_{m}$. Recall also, for $s\geq1$, the integer
$\kappa_{s}$ of~\eqref{eq:kappa-def} and the integer $j_{s}$ of
Theorem~\ref{prop:explicit-gens}. Set
\begin{equation}\label{eq:st-constants}
\begin{aligned}
P_{m}&:=(-1)^{m}\,39\,(2m-1)!\;\kappa_{m}\,j_{m}\,\tfrac{d_{m}}{g_{m}},\\[4pt]
Q_{m}&:=(-1)^{m}\,6\,(2m+3)!\;\kappa_{m}\,j_{m+2}\,\tfrac{U_{m}}{d_{m}},\\[4pt]
T_{m}&:=(-1)^{m}\Bigl(36\,(2m-1)!\;\kappa_{m}\,j_{m}\,\tfrac{d_{m}}{g_{m}}
   -6\,(2m+3)!\;\kappa_{m}\,j_{m+2}\,\tfrac{S_{m}\lambda_{m}}{g_{m}}\Bigr),\\[4pt]
R_{1}&:=648\,j_{2}\,\tfrac{d_{2}}{g_{2}},\\[4pt]
R_{2}&:=450\,j_{2}\,\tfrac{d_{2}}{g_{2}}
   -30240\,j_{4}\,\tfrac{S_{2}\lambda_{2}}{g_{2}}.
\end{aligned}
\end{equation}
All five are integers, since $g_{m}\mid S_{m}$, $g_{m}\mid d_{m}$ and
$d_{m}\mid U_{m}$. As $\kappa_{m}$, $j_{m}$, $d_{m}$ and $g_{m}$ are positive
and $U_{m}=-\tfrac{\kappa_{m}}{8}D_{m+4}\,j_{m+4}\neq0$, the integers $P_{m}$, $Q_{m}$ and
$R_{1}$ are nonzero.
\begin{proof}[\textbf{Proof of Theorem~\ref{thm:st-pontryagin}}]
Set $\eta:=r_{*}\bigl(s\,\widetilde{\eta_{1,k}}+t\,\widetilde{\eta_{2,k}}+x\bigr)
\in[X_{k},BO]$, where $r\colon G/O\to BO$ is the canonical map. 
As $x$ is a torsion element of
$\Ndiff_{\partial}(\OP^{2}\times\mathbb{D}^{k})$ and
$[X_{k},BO]\cong[\Sigma^{k}\OP^{2},BO]\oplus\pi_{k}(BO)\cong\Z^{3}$ is
torsion free by Lemma~\ref{lem:KO-OP2}\,(a)
and~\eqref{eq:KO-pt-groups}, the map $r_{*}$ annihilates $x$. Hence
$\eta=r_{*}\bigl(s\,\widetilde{\eta_{1,k}}+t\,\widetilde{\eta_{2,k}}\bigr)$.
As in Section~\ref{sec:obstr-free}, gluing $\OP^{2}\times\mathbb{D}^{k}$ to $M$ along the boundary diffeomorphism $\partial f$ yields the closed manifold $\widehat{M}=M\cup_{\partial f}(\OP^{2}\times\mathbb{D}^{k})$ together with the degree-one normal map $\widehat{f}=f\cup\operatorname{id}\colon
\widehat{M}\longrightarrow\OP^{2}\times\mathbb{S}^{k}$, covered by a stable bundle map
$\nu_{\widehat{M}}\to\nu_{\OP^{2}\times\mathbb{S}^{k}}\oplus\pi^{*}\eta$,
where $\pi\colon\OP^{2}\times\mathbb{S}^{k}\to X_{k}$ is the collapse map of Section~\ref{sec:obstr-free}. Hence, as stable bundles over
$\OP^{2}\times\mathbb{S}^{k}$,
\begin{equation}\label{eq:st-tangent}
(\widehat{f}^{*})^{-1}T\widehat{M}
=T(\OP^{2}\times\mathbb{S}^{k})-\pi^{*}\eta .
\end{equation}

By Proposition~\ref{prop:kernel-sigma} and the computation of
$\widetilde{\ph}$ in the proof of Theorem~\ref{thm:sigma-on-Sigma-kOP2}, 
\[
\widetilde{\ph}(\eta)=e\,\iota_{k}+a\,\sigma^{k}u+b\,\sigma^{k}u^{2},
\]
where
\begin{align*}
e&=\kappa_{m}\,j_{m}\,\tfrac{d_{m}}{g_{m}}\,t,\\
 a&=\kappa_{m}\,j_{m+2}
   \Bigl(\tfrac{U_{m}}{d_{m}}\,s-\tfrac{S_{m}\lambda_{m}}{g_{m}}\,t\Bigr),\\
 b&=\kappa_{m}\Bigl(\tfrac{j_{m+2}}{240}+c_{m}\Bigr)
   \Bigl(\tfrac{U_{m}}{d_{m}}\,s-\tfrac{S_{m}\lambda_{m}}{g_{m}}\,t\Bigr)
   -\kappa_{m}\,j_{m+4}
   \Bigl(\tfrac{A_{m}}{d_{m}}\,s+\tfrac{S_{m}\mu_{m}}{g_{m}}\,t\Bigr).
\end{align*}
 Since $X_{k}$ is a suspension, the reduced diagonal being null-homotopic, all
cup products of positive-degree classes in $\widetilde{H}^{*}(X_{k};\Q)$
vanish, and~\eqref{phchar} yields
\begin{align}
p_{m}(\eta)&=(-1)^{m+1}(2m-1)!\,e\,\iota_{k},\label{eq:st-pm}\\
p_{m+2}(\eta)&=(-1)^{m+1}(2m+3)!\,a\,\sigma^{k}u,\label{eq:st-pm2}\\
p_{m+4}(\eta)&=(-1)^{m+1}(2m+7)!\,b\,\sigma^{k}u^{2}.\label{eq:st-pm4}
\end{align}

Applying the total Pontryagin class to~\eqref{eq:st-tangent}, and using its
naturality and its multiplicativity, the latter holding since
$H^{*}(\OP^{2}\times\mathbb{S}^{k};\Z)$ is torsion free, yields
\begin{align}
(\widehat{f}^{*})^{-1}p(T\widehat{M})
&=p\bigl(T(\OP^{2}\times\mathbb{S}^{k})\bigr)\cdot\pi^{*}p(\eta)^{-1}
  \notag\\
&=p\bigl(T(\OP^{2}\times\mathbb{S}^{k})\bigr)
  \bigl(1-\pi^{*}p_{m}(\eta)-\pi^{*}p_{m+2}(\eta)-\pi^{*}p_{m+4}(\eta)\bigr)
  \notag\\
&=\bigl(1+6(u\times1)+39(u^{2}\times1)\bigr)
  \bigl(1-\pi^{*}p_{m}(\eta)-\pi^{*}p_{m+2}(\eta)
  -\pi^{*}p_{m+4}(\eta)\bigr).
  \label{eq:st-total-p}
\end{align}
Here $p(\eta)=1+p_{m}(\eta)+p_{m+2}(\eta)+p_{m+4}(\eta)$, since $X_{k}$ has
cells only in the dimensions $k$, $k+8$ and $k+16$;
the second equality follows from the identity $p(\eta)^{-1}=2-p(\eta)$, which
holds since $X_{k}$ is a suspension, so that cup products of positive-degree
classes vanish; and the third uses $p_{2}(\OP^{2})=6u$ and
$p_{4}(\OP^{2})=39u^{2}$ \cite[Theorem~19.4]{borel} together with the stable
triviality of $T\mathbb{S}^{k}$.

Suppose first $k\notin\{8,16\}$, that is $m\notin\{2,4\}$. The classes
$u\times1$, $u^{2}\times1$, $1\times\iota_{k}$ and $u\times\iota_{k}$ then
lie in the distinct degrees $8$, $16$, $k$ and $k+8$, so~\eqref{eq:st-total-p}, \eqref{eq:st-pm} and~\eqref{eq:st-pm2},
together with $\pi^{*}\iota_{k}=1\times\iota_{k}$,
$\pi^{*}(\sigma^{k}u)=u\times\iota_{k}$ 
yield
\begin{align*}
(\widehat{f}^{*})^{-1}p_{m}(T\widehat{M})
&=(-1)^{m}(2m-1)!\,e\,(1\times\iota_{k}),\\
(\widehat{f}^{*})^{-1}p_{m+2}(T\widehat{M})
&=(-1)^{m}\bigl((2m+3)!\,a+6\,(2m-1)!\,e\bigr)(u\times\iota_{k}),\\
(\widehat{f}^{*})^{-1}p_{2}(T\widehat{M})
&=6(u\times1),\\
(\widehat{f}^{*})^{-1}p_{4}(T\widehat{M})
&=39(u^{2}\times1).
\end{align*}
As
$\widehat{f}$ has degree one; therefore, by using
$\langle u^{2}\times\iota_{k},[\OP^{2}\times\mathbb{S}^{k}]\rangle=1$ we obtain
\begin{align*}
(\mathfrak{p}_{4}\,\mathfrak{p}_{m})(\widehat{M})
&=(-1)^{m}\,39\,(2m-1)!\,e=P_{m}\,t,
\end{align*}
\begin{align}\label{jhf}
(\mathfrak{p}_{2}\,\mathfrak{p}_{m+2})(\widehat{M})
&=(-1)^{m}\bigl(6\,(2m+3)!\,a+36\,(2m-1)!\,e\bigr)=Q_{m}\,s+T_{m}\,t .
\end{align}

For $k=16$, that is $m=4$, the class $1\times\iota_{k}$ has degree $16$, so
the degree-$16$ component of~\eqref{eq:st-total-p} is 
\[
(\widehat{f}^{*})^{-1}p_{4}(T\widehat{M})
=39(u^{2}\times1)+7!\,e\,(1\times\iota_{k}).
\]
Squaring, the terms $(u^{2}\times1)^{2}$ and $(1\times\iota_{k})^{2}$ vanish,
since $u^{3}=0$ and $\iota_{k}^{2}=0$, while the two cross terms are equal; hence
\[
\mathfrak{p}_{4}^{2}(\widehat{M})=2\cdot39\cdot7!\,e=2\,P_{4}\,t,\]
while equation~\eqref{jhf} continues to hold.

For $k=8$, that is $m=2$, the classes $u\times1$ and $1\times\iota_{k}$ share
degree $8$, and $u^{2}\times1$ and $u\times\iota_{k}$ share degree $16$, so
the corresponding components of~\eqref{eq:st-total-p} are
\begin{align*}
(\widehat{f}^{*})^{-1}p_{2}(T\widehat{M})
&=6(u\times1)+6\,e\,(1\times\iota_{k}),\\
(\widehat{f}^{*})^{-1}p_{4}(T\widehat{M})
&=39(u^{2}\times1)+\bigl(7!\,a+36\,e\bigr)(u\times\iota_{k});
\end{align*}
using $\kappa_{2}=1$, we obtain
\begin{align*}
\mathfrak{p}_{2}^{3}(\widehat{M})
&=648\,e=R_{1}\,t,\\
(\mathfrak{p}_{2}\,\mathfrak{p}_{4})(\widehat{M})
&=6\bigl(7!\,a+36\,e\bigr)+234\,e=30240\,a+450\,e=Q_{2}\,s+R_{2}\,t .
\end{align*}

As $P_{m}$, $Q_{m}$ and $R_{1}$ are non-zero, in each case the first of the
two relations determines $t$, and the second then determines $s$. This completes the proof.
\end{proof}
\begin{proof}[\textbf{Proof of Corollary~\ref{cor:infinitely-many}}]
Fix $k\equiv 0\pmod 4$ and let $n\in\Z$. By Theorem~\ref{thm:main-structure}
the quadruple $(n,0,0,0)\in\Z^{2}\oplus T_{k}\oplus\Theta_{k}$ corresponds to
the element $[(M_{n},\partial M_{n}),(f_{n},\partial f_{n})]
\in\Sdiff_{\partial}(\OP^{2}\times\mathbb{D}^{k})$
with the relative smooth normal invariant $n\,\widetilde{\eta_{1,k}}$. Set
$W_{n,k}:=M_{n}\cup_{\partial f_{n}}(\OP^{2}\times\mathbb{D}^{k})$. Then
$W_{n,k}$ is a closed smooth manifold of dimension $16+k$, and
$\widehat{f_{n}}=f_{n}\cup\operatorname{id}\colon W_{n,k}\to\OP^{2}\times\mathbb{S}^{k}$ is a
homotopy equivalence.

By Theorem~\ref{thm:st-pontryagin} applied with $(s,t)=(n,0)$, we obtain $(\mathfrak{p}_{2}\,\mathfrak{p}_{m+2})(W_{n,k})=Q_{m}\,n$
in each of its three cases. As $Q_{m}\neq0$, this gives the stated ratio. By the topological invariance of the rational
Pontryagin classes \cite[Theorem~1]{Novikov}, the Pontryagin number
$(\mathfrak{p}_{2}\,\mathfrak{p}_{m+2})(W_{n,k})$ is preserved by
orientation-preserving homeomorphisms and changes sign under
orientation-reversing ones. Consequently, a homeomorphism between $W_{n,k}$
and $W_{n',k}$ forces $Q_{m}n=\pm\,Q_{m}n'$, and $Q_{m}\neq0$ yields
$|n|=|n'|$. Hence the collection $\{W_{n,k}\}_{n\geq1}$ consists of pairwise
non-homeomorphic manifolds.
\end{proof}
\begin{proof}[\textbf{Proof of Theorem~\ref{thm:image-obstruction}}]
As $\sigma^{\diff}_{16+k}$ vanishes on the torsion subgroup $\Ndiff_{\partial}(\OP^{2}\times\mathbb{D}^{k})$,
its image is generated by the values of $\sigma^{\diff}_{16+k}$ on the basis
$\{\widetilde{\xi_{\mathbb{S}^{k}}},\widetilde{\xi_{1,k}},
\widetilde{\xi_{2,k}}\}$ of the free summand. 
By Theorem~\ref{thm:sigma-on-Sigma-kOP2}
$\operatorname{Im}(\sigma^{\diff}_{16+k})=g_{m}\Z$, where
$g_{m}=\gcd(S_{m},A_{m},U_{m})$. Lemma~\ref{lem:2-valuation} determines the $2$-adic valuation $v_{2}(g_{m})$ of $g_m$. Since
$v_2(g_m)\ge 1$, we have $g_m\neq \pm1$, and hence
$\operatorname{Im}(\sigma^{\diff}_{16+k})$ is a proper subgroup of
$\mathbb{Z}$. Therefore $\sigma^{\diff}_{16+k}$ is not surjective.
\end{proof}

\section{Diffeomorphism groups and
\texorpdfstring{$\OP^{2}$}{OP2}-bundles with non-vanishing
\texorpdfstring{$\widehat{\mathfrak{A}}$}{A-hat}-genus}\label{sec:applications}

In this section we determine the rational homotopy
groups of the block diffeomorphism group of~$\OP^2$ in every degree
congruent to $3$ modulo $4$ (Theorem~\ref{pikbdiff}), and of the
diffeomorphism group in degrees $3$, $7$, and $11$
(Corollary~\ref{hgk}).
We then combine this with the surgery obstruction formula of
Theorem~\ref{thm:sigma-general-xi} and an
$\widehat{\mathfrak{A}}$-genus calculation to construct smooth
$\OP^2$-bundles over $\s^4$, $\s^8$, and $\s^{12}$ with
non-vanishing $\widehat{\mathfrak{A}}$-genus
(Theorem~\ref{thm:Ahat-OP2-bundle}).
As a consequence, we deduce that the spaces of metrics of positive
sectional, Ricci, and scalar curvature on~$\OP^2$ have nontrivial
rational homotopy groups in degrees $3$, $7$, and $11$
(Corollary~\ref{cor:positive-curvature}).

Let $M$ be a closed, oriented, smooth manifold. Let
$\operatorname{hAut}(M)$, $\operatorname{Diff}(M)$, and $\widetilde{\operatorname{Diff}}(M)$ denote,
respectively, the topological monoid of orientation-preserving
homotopy equivalences of $M$, the group of orientation-preserving
diffeomorphisms of $M$, and the block diffeomorphism group of $M$
(cf.~\cite{WW01}). Their classifying spaces are denoted by
$\operatorname{BhAut}(M)$, $\operatorname{BDiff}(M)$, and $\operatorname{B\widetilde{Diff}}(M)$,
respectively. 
The classifying spaces $B\widetilde{\operatorname{Diff}}(M)$ and
$\operatorname{BhAut}(M)$ fit into a fibration
\begin{equation}\label{fibration1}
   {\operatorname{hAut}(M)}\big/{\widetilde{\operatorname{Diff}}(M)}
   \;\rightarrow\;
   \operatorname{B\widetilde{Diff}}(M)
   \;\rightarrow\;
   \operatorname{BhAut}(M),
\end{equation}
whose fibre classifies $M$-block bundles that are blockwise homotopy
trivial. 

We now recall a homomorphism defined by evaluating the $\widehat{\mathfrak{A}}$-genus
on the total space of a fibre bundle over a sphere. A homotopy class
$[f] \in \pi_{k}(\operatorname{BDiff}(M))$ corresponds, under the isomorphism
$\pi_{k}(\operatorname{BDiff}(M)) \cong \pi_{k-1}(\operatorname{Diff}(M))$, to a
clutching function
$\phi_{f}\colon (\mathbb{S}^{k-1}, *) \to (\operatorname{Diff}(M), \operatorname{id}_{M})$,
and the associated smooth fibre bundle $M \to E_{f} \to \mathbb{S}^{k}$ has total
space
\[
   E_{f}
   = \bigl(\mathbb{D}^{k}_{+} \times M\bigr)
     \cup_{\Phi_{f}}
     \bigl(\mathbb{D}^{k}_{-} \times M\bigr),
   \qquad
   \Phi_{f}(x, m) = \bigl(x,\, \phi_{f}(x)(m)\bigr),
\]
where $\Phi_{f}\colon \mathbb{S}^{k-1} \times M \to \mathbb{S}^{k-1} \times M$
identifies the boundaries of the two halves; $E_{f}$ is a closed, oriented
$(\dim M + k)$-manifold. As the $\widehat{\mathfrak{A}}$-genus is a bordism invariant,
its evaluation on $E_{f}$ depends only on the homotopy class of  $f$ and yields a homomorphism
\[
   \widehat{\mathfrak{A}}_{\mathrm{Diff}}\colon
   \pi_{k}(\operatorname{BDiff}(M)) \rightarrow \Q,
   \qquad
   [f] \longmapsto \widehat{\mathfrak{A}}(E_{f}).
\]

Since a smooth fibre bundle and its underlying block bundle have the same total
space, $\widehat{\mathfrak{A}}_{\mathrm{Diff}}$ factors as
\begin{equation}\label{eq:Ahat-factor}
   \pi_{k}(\operatorname{BDiff}(M)) \rightarrow
   \pi_{k}(\operatorname{B\widetilde{Diff}}(M))
   \xrightarrow{\ \widehat{\mathfrak{A}}_{\widetilde{\mathrm{Diff}}}\ } \Q ,
\end{equation}
where the first map is induced by the canonical comparison map
$\operatorname{Diff}(M) \to \widetilde{\operatorname{Diff}}(M)$ and
$\widehat{\mathfrak{A}}_{\widetilde{\mathrm{Diff}}}$ is the well-defined homomorphism
given by evaluating the $\widehat{\mathfrak{A}}$-genus on the total space of a block
bundle (see \cite{KKR21}).
\subsection{\texorpdfstring{Rational homotopy of the block diffeomorphism group of $\OP^{2}$}{Rational homotopy of the block diffeomorphism group of OP2}}
In this subsection we compute the rational homotopy groups of the
block diffeomorphism group
$\widetilde{\operatorname{Diff}}(\OP^{2})$ in every degree congruent
to $3$ modulo $4$ (Theorem~\ref{pikbdiff}), and those of the
diffeomorphism group $\operatorname{Diff}(\OP^{2})$ in degrees $3$,
$7$, and $11$ (Corollary~\ref{hgk}); the comparison between block and
smooth diffeomorphism groups used there is needed again in the proof
of Theorem~\ref{thm:Ahat-OP2-bundle}.

\begin{theorem}\label{pikbdiff}
For $k \equiv 0 \pmod{4}$ and $k \geq 0$,
\[
   \pi_{k-1}\!\bigl(\widetilde{\operatorname{Diff}}(\OP^{2})\bigr)
      \otimes \mathbb{Q}
   \;\cong\;
   \pi_{k}\!\bigl(\operatorname{B}\widetilde{\operatorname{Diff}}(\OP^{2})\bigr)
      \otimes \mathbb{Q}
   \;\cong\;
   \begin{cases}
      \mathbb{Q}^{2}, & \text{if } k \notin \{16,\, 24\}, \\[4pt]
      \mathbb{Q}^{3}, & \text{if } k \in \{16,\, 24\}.
   \end{cases}
\]
\end{theorem}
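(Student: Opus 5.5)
The plan is to use the fibration~\eqref{fibration1} with $M=\OP^{2}$ and compute rationally. Its long exact sequence of homotopy groups reads
\[
\pi_{k}\bigl(\operatorname{hAut}(\OP^{2})/\widetilde{\operatorname{Diff}}(\OP^{2})\bigr)\to\pi_{k}\bigl(\operatorname{B\widetilde{Diff}}(\OP^{2})\bigr)\to\pi_{k}\bigl(\operatorname{BhAut}(\OP^{2})\bigr)\to\pi_{k-1}\bigl(\operatorname{hAut}(\OP^{2})/\widetilde{\operatorname{Diff}}(\OP^{2})\bigr).
\]
For $k\ge1$ we identify the fibre terms with relative structure sets via $\Sdiff_{\partial}(\OP^{2}\times\mathbb{D}^{k})\cong\pi_{k}(\operatorname{hAut}/\widetilde{\operatorname{Diff}})$. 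By Theorem~\ref{thm:main-structure}, for $k\equiv0\pmod 4$, $k\ge4$, this group has rank exactly $2$. For the neighbouring degree $k-1\equiv3\pmod4$, we argue that $\Sdiff_{\partial}(\OP^{2}\times\mathbb{D}^{k-1})$ is finite. Indeed, $\Ndiff_\partial(\OP^2\times\mathbb{D}^{k-1})\otimes\Q\cong\widetilde{KO}^0(X_{k-1})\otimes\Q=0$, since $\widetilde H^{4*}(\Sigma^{k-1}\OP^2\vee\mathbb{S}^{k-1};\Q)=0$ because all cells lie in odd degrees. Equivalently, one can note that the image of the realization map $\Delta$ is finite by the Remark following Theorem~\ref{thm:image-obstruction}. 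Hence rationally the fibre contributes exactly $\Q^{2}$ in degree $k$ and $0$ in degree $k-1$.

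Next we need $\pi_{*}(\operatorname{BhAut}(\OP^{2}))\otimes\Q$, i.e.\ $\pi_{k-1}(\operatorname{hAut}(\OP^{2}))\otimes\Q$. We compute it by Sullivan models: the minimal model of $\OP^{2}$ is $(\Lambda(x_{8},y_{23}),\,dy=x^{3})$, and rational homotopy of the identity component of $\operatorname{hAut}$ is the positive-degree homology of the derivation complex $\operatorname{Der}(\Lambda V)$. The derivations are $\theta_{y}$ (degree $23$), $x^{j}\theta_{y}$ (degree $23-8j$, $j=1,2$, killed appropriately), and $\theta_{x}$ (degree $8$). Computing $D=[d,-]$ shows that the surviving classes in positive degree are represented by $\theta_{y}$ and $x\theta_{y}$, of degrees $23$ and $15$; the class $x^{2}\theta_y$ in degree $7$ is a boundary of $\theta_x$ up to a scalar. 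Thus $\pi_{k}(\operatorname{BhAut}(\OP^{2}))\otimes\Q$ is $\Q$ for $k\in\{16,24\}$ and $0$ otherwise, and $\pi_{0}$ of $\operatorname{hAut}$ is finite by \cite{Oka}. This derivation computation is the main step to double-check: the key degree bookkeeping is that $\theta_{x}$ maps to $3x^{2}\theta_{y}$.

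Finally we assemble. For $k\notin\{16,24\}$, the exact sequence tensored with $\Q$ gives $\Q^{2}\to\pi_{k}(\operatorname{B\widetilde{Diff}})\otimes\Q\to0$, with the left map injective because $\pi_{k+1}(\operatorname{BhAut})\otimes\Q=0$, since $k+1$ is odd and is neither $16$ nor $24$. For $k\in\{16,24\}$ we get $0\to\Q^{2}\to\pi_{k}(\operatorname{B\widetilde{Diff}})\otimes\Q\to\Q\to0$, where surjectivity follows from the vanishing of $\pi_{k-1}$ of the fibre rationally, giving $\Q^{3}$. The case $k=0$ requires separate treatment: $\pi_{-1}$ is meaningless, and the statement there is degenerate, to be handled via $\pi_0$ of the groups and read as a convention. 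The identification $\pi_{k-1}(\widetilde{\operatorname{Diff}})\cong\pi_{k}(\operatorname{B\widetilde{Diff}})$ is standard.
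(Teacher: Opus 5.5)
Your proposal follows the paper's proof essentially step for step. It uses the rational long exact sequence of $\operatorname{hAut}(\OP^{2})/\widetilde{\operatorname{Diff}}(\OP^{2})\to \operatorname{B\widetilde{Diff}}(\OP^{2})\to\operatorname{BhAut}(\OP^{2})$, rank $2$ of $\Sdiff_{\partial}(\OP^{2}\times\mathbb{D}^{k})$ from Theorem~\ref{thm:main-structure}, the derivation complex of $(\Lambda(x_8,y_{23}),\,dy=x^3)$ with homology $\Q$ exactly in degrees $15$ and $23$, and the rational vanishing of $\Sdiff_{\partial}(\OP^{2}\times\mathbb{D}^{k-1})$ to get surjectivity onto $\Q$ when $k\in\{16,24\}$.

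One correction: the two facts you give for that vanishing are not equivalent alternatives; they must be combined. The fact $\Ndiff_{\partial}(\OP^{2}\times\mathbb{D}^{k-1})\otimes\Q=0$ alone leaves the term $L_{16+k}(\Z)\otimes\Q\cong\Q$, whose image dies only because $\sigma^{\diff}_{16+k}\neq 0$ (Theorem~\ref{thm:image-obstruction}). The paper uses exactly this combination. Also, as you note, neither argument covers $k=0$, where $\pi_{0}(\operatorname{B\widetilde{Diff}}(\OP^{2}))$ is trivial, so what is actually proved is the case $k\geq 4$.
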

Before we dive into the proof, let us recall that for a manifold $M$ of dimension $\geq 5$, it follows from the $h$-cobordism theorem that
$\pi_{k}\!\bigl(\operatorname{hAut}(M)/\widetilde{\operatorname{Diff}}(M)\bigr)
\cong \Sdiff_{\partial}(M \times \mathbb{D}^{k})$
\cite[Page~33]{stability}.
Moreover, when $M$ is simply connected, It follows from \cite[Proposition~11]{sulliavsresult} that the graded Lie algebra
$\pi_{*}(\operatorname{hAut}(M),\mathrm{id}) \otimes \mathbb{Q}$ is
isomorphic to $H_{*}(\operatorname{Der}_{+}(\Lambda V), \partial)$,
where $\Lambda V$ denotes the Sullivan minimal model~\cite{rational}
of~$M$ and $(\operatorname{Der}_{+}(\Lambda V), \partial)$ is the 
differential graded Lie algebra of positive-degree derivations 
of~$\Lambda V$.
\begin{proof}[Proof of Theorem~\ref{pikbdiff}]
The long exact sequence of homotopy groups associated with the fibration
${\operatorname{hAut}(\OP^{2})}\big/{\widetilde{\operatorname{Diff}}(\OP^{2})} \to \operatorname{B\widetilde{Diff}}(\OP^{2}) \to \operatorname{BhAut}(\OP^{2})$ is given by
\begin{multline}\label{haut}
   \cdots
   \rightarrow
   \pi_{k+1}\!\bigl(\operatorname{hAut}(\OP^{2})\bigr) \otimes \mathbb{Q}
   \rightarrow
   \pi_{k}\!\!\left(
      \frac{\operatorname{hAut}(\OP^{2})}
           {\widetilde{\operatorname{Diff}}(\OP^{2})}
   \right) \otimes \mathbb{Q}
   \rightarrow
   \pi_{k-1}\!\bigl(\widetilde{\operatorname{Diff}}(\OP^{2})\bigr)
      \otimes \mathbb{Q} \\
   \rightarrow
   \pi_{k-1}\!\bigl(\operatorname{hAut}(\OP^{2})\bigr) \otimes \mathbb{Q}
   \rightarrow
   \pi_{k-1}\!\!\left(
      \frac{\operatorname{hAut}(\OP^{2})}
           {\widetilde{\operatorname{Diff}}(\OP^{2})}
   \right) \otimes \mathbb{Q}
   \rightarrow \cdots.
\end{multline}
We note from Theorem~\ref{thm:main-structure} and \cite{stability} that
\begin{equation}\label{haut2}
   \pi_{k}\!\!\left(
      \frac{\operatorname{hAut}(\OP^{2})}
           {\widetilde{\operatorname{Diff}}(\OP^{2})}
   \right) \otimes \mathbb{Q}
   \;\cong\;
   \Sdiff_{\partial}(\OP^{2} \times \mathbb{D}^{k})
      \otimes \mathbb{Q}
   \;=\; \mathbb{Q}^{2},
   \quad \text{for } k \equiv 0 \pmod{4}.
\end{equation}

It remains to compute the rational homotopy groups of
$\operatorname{hAut}(\OP^{2})$. 
The Sullivan minimal model of \(\OP^{2}\) is
\[
(\Lambda(x,y),d),
\qquad |x|=8,\quad |y|=23,\quad dx=0,\quad dy=x^{3}.
\]
Since \(\Lambda(x,y)=\mathbb{Q}[x]\otimes\Lambda(y)\) as a graded vector space, it has basis \(\{x^{j},x^{j}y:j\ge 0\}\). Consequently, $(\Lambda(x,y))^{d}\neq 0$ for $d\in \{8j:j\ge 0\}\cup\{8j+23:j\ge 0\}$.


A derivation \(\theta_n\) of degree \(-n\) is determined by the pair
\((\theta_n(x),\theta_n(y))\), where
\(\theta_n(x)\in(\Lambda V)^{8-n}\) and
\(\theta_n(y)\in(\Lambda V)^{23-n}\).
A degree argument shows that
\(\operatorname{Der}_n(\Lambda V)\neq 0\) for \(n>0\) only when
\(n\in\{7,8,15,23\}\). Moreover, each nonzero derivation space is
one-dimensional, with basis elements given by the following table:
\[
\renewcommand{\arraystretch}{1.3}
\begin{array}{|c|c|c|c|c|}
\hline
   n & 7 & 8 & 15 & 23 \\
   \hline
   \theta_n(x) & 0 & 1 & 0 & 0 \\
   \hline
   \theta_n(y) & x^2 & 0 & x & 1\\
   \hline
\end{array}
\]
The differential on \(\operatorname{Der}(\Lambda V)\) is given by
\[
\partial\theta_n=[d,\theta_n]
=d\circ\theta_n-(-1)^{|\theta_n|}\theta_n\circ d.
\]
Since \(dx=0\) and \(\theta_n(x^3)=3x^2\theta_n(x)\), it follows that
$(\partial\theta_n)(x)=d(\theta_n(x))$,
$(\partial\theta_n)(y)
=d(\theta_n(y))-(-1)^n\,3x^2\theta_n(x)$.
From the description of the derivation spaces, the only possible nonzero differential is $\partial\colon \operatorname{Der}_8(\Lambda V)\rightarrow \operatorname{Der}_7(\Lambda V)$.
For the basis element \(\theta_8\), we obtain $(\partial\theta_8)(x)=0$ and $(\partial\theta_8)(y)
=-3x^2$.
Hence $\partial(\theta_8)=-3\theta_7$.
Therefore
\begin{equation}\label{isoaut}
   \pi_{n}\!\bigl(\operatorname{hAut}(\OP^{2})\bigr) \otimes \mathbb{Q}
   \;\cong\;
   H_n\!\bigl(\operatorname{Der}_+(\Lambda V),\, \partial\bigr)
   \;\cong\;
   \begin{cases}
      \mathbb{Q}, & n = 15, \\[3pt]
      \mathbb{Q}, & n = 23, \\[3pt]
      0,          & \text{otherwise}.
   \end{cases}
\end{equation}

We now deduce the rational homotopy groups $\pi_{k-1}(\widetilde{\operatorname{Diff}}(\OP^{2}))\otimes\mathbb{Q}$
from the long exact sequence~\eqref{haut}, distinguishing two cases according to whether $k-1\in\{15,23\}$ for which $\pi_{k-1}\bigl(\operatorname{hAut}(\OP^{2})\bigl)\otimes \mathbb{Q}$ is non-trivial. If $k\notin\{16,24\}$, then $\pi_{k-1}(\operatorname{hAut}(\OP^2))\otimes\mathbb{Q}=0$ by~\eqref{isoaut}. Hence using ~\eqref{haut2}, the long exact sequence~\eqref{haut} yields the isomorphism  
$\pi_{k-1}(\widetilde{\operatorname{Diff}}(\OP^{2}))\otimes\mathbb{Q}
   \;\cong\;
   \pi_{k}\!\bigl(\operatorname{hAut}(\OP^{2})/\widetilde{\operatorname{Diff}}(\OP^{2})\bigr)\otimes\mathbb{Q}
   \;\cong\;\mathbb{Q}^{2}$.

Now we consider the cases $k=16,24$. Then by the isomorphism~\eqref{isoaut}, we have 
$\pi_{k-1}\bigl(\operatorname{hAut}(\OP^{2})\bigr)\otimes\mathbb{Q}\cong\mathbb{Q}$ and $\pi_k\bigl(\operatorname{hAut}(\OP^{2})\bigr)\otimes\mathbb{Q}=0$. Hence, for $k=16, 24$, the long exact sequence~\eqref{haut} yields  
\begin{equation}\label{segment}
   0
   \rightarrow
\underbrace{\pi_{k}\!\Bigl(
      \tfrac{\operatorname{hAut}(\OP^{2})}{\widetilde{\operatorname{Diff}}(\OP^{2})}
   \Bigr)\otimes\mathbb{Q}}_{\cong\,\mathbb{Q}^{2}}
   \rightarrow
   \pi_{k-1}\bigl(\widetilde{\operatorname{Diff}}(\OP^{2})\bigr)\otimes\mathbb{Q}
   \rightarrow
   \mathbb{Q}
   \rightarrow
   \pi_{k-1}\!\Bigl(
      \tfrac{\operatorname{hAut}(\OP^{2})}{\widetilde{\operatorname{Diff}}(\OP^{2})}
   \Bigr)\otimes\mathbb{Q}.
\end{equation}
Since $\Sdiff_{\partial}(\OP^{2}\times\mathbb{D}^{k-1})
\cong
\pi_{k-1}\!\left(
\tfrac{\operatorname{hAut}(\OP^{2})}
{\widetilde{\operatorname{Diff}}(\OP^{2})}
\right)$,
the desired conclusion follows from the exact sequence~\eqref{segment} once $\Sdiff_{\partial}(\OP^{2}\times\mathbb{D}^{k-1})=0$.

Indeed, we establish stronger result that
\[
\Sdiff_{\partial}(\OP^{2}\times\mathbb{D}^{k-1})\otimes\mathbb{Q}=0 \quad \text{ for every } k\equiv0\pmod4, k\geq 4.
\]
 Since $\OP^{2}\times\mathbb{D}^{k-1}$ is simply
connected of odd dimension $15+k$, $L_{15+k}(\mathbb{Z})=0$, so relative surgery exact sequence for $\OP^{2}\times\mathbb{D}^{k-1}$ becomes
\begin{equation}\label{segment1}
   \Ndiff_{\partial}(\OP^{2}\times\mathbb{D}^{k})
   \xrightarrow{\ \sigma^{\diff}_{16+k}\ }
   L_{16+k}(\mathbb{Z})
   \rightarrow
   \Sdiff_{\partial}(\OP^{2}\times\mathbb{D}^{k-1})
   \rightarrow
   \Ndiff_{\partial}(\OP^{2}\times\mathbb{D}^{k-1})
   \rightarrow 0.
\end{equation}

Since $\mathbb{Q}$ is a flat $\mathbb{Z}$-module, tensoring the exact sequence~\eqref{segment1} with $\mathbb{Q}$ preserves exactness. 
Since
$\Ndiff_{\partial}(\OP^{2}\times\mathbb{D}^{j})
\cong[\Sigma^{j}\OP^{2}\vee\mathbb{S}^{j},\,G/O],
~ j\ge1$,
and
$G/O_{\mathbb{Q}}
\simeq\mathrm{BSO}_{\mathbb{Q}}
\simeq\prod_{i\ge1}K(\mathbb{Q},4i)$
\cite[p.~433, 807]{LM24}, we obtain $[X,G/O]\otimes\mathbb{Q}
\cong
\bigoplus_{i\ge1}\widetilde H^{4i}(X;\mathbb{Q})$
for every finite complex $X$. Therefore,
\[
\Ndiff_{\partial}(\OP^{2}\times\mathbb{D}^{k-1})\otimes\mathbb{Q}=0,
\qquad
\Ndiff_{\partial}(\OP^{2}\times\mathbb{D}^{k})\otimes\mathbb{Q}
\cong\mathbb{Q}^{3},
\]
since $\widetilde{H}^{*}(\Sigma^{j}\OP^{2}\vee\mathbb{S}^{j};\Q)$ is nonzero
only in degrees $j$, $j+8$, and $j+16$.
Since $\sigma^{\diff}_{16+k}$ has nonzero image by Theorem~\ref{thm:image-obstruction}, the map $\sigma^{\diff}_{16+k}\otimes\mathbb{Q}\colon \mathbb{Q}^3\to\mathbb{Q}$ is surjective. Thus after rationalization the exact sequence~\eqref{segment1} yields $\Sdiff_{\partial}(\OP^{2}\times\mathbb{D}^{k-1})\otimes\mathbb{Q}=0$.
This completes the proof.
\end{proof}



\begin{corollary}\label{hgk}
 $ \pi_{k-1}\!\bigl(\operatorname{Diff}(\OP^{2})\bigr) \otimes \mathbb{Q}
   \;\cong\;
   \pi_{k}\!\bigl(\operatorname{BDiff}(\OP^{2})\bigr) \otimes \mathbb{Q}
   \;\cong\;
   \mathbb{Q}^{2}$, for $k = 4$, $8$, and $12$.
\end{corollary}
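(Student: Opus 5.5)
The plan is to compare $\operatorname{Diff}(\OP^{2})$ with $\widetilde{\operatorname{Diff}}(\OP^{2})$ rationally in the relevant low degrees, and then invoke Theorem~\ref{pikbdiff}. The natural tool is Morlet's lemma of disjunction / the Weiss--Williams description of the homotopy fibre $\widetilde{\operatorname{Diff}}(M)/\operatorname{Diff}(M)$. In a range of degrees this fibre is governed by the concordance (pseudo-isotopy) space of $M$. By Igusa's stability theorem, for $M$ of dimension $n$ the map $\pi_{i}(\operatorname{BDiff}(M))\to\pi_{i}(\operatorname{B\widetilde{Diff}}(M))$ is an isomorphism for $i$ below roughly $\min\{(n-7)/2,(n-4)/3\}$. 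For $n=16$ this gives the range $i\le 3$ or $4$, which covers only $k=4$. So stability alone will not reach $k=8,12$, and we instead compute the rational homotopy of the fibre.

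Concretely, I would use the Weiss--Williams map, which in the concordance stable range identifies the fibre rationally. Its rational homotopy is given by $\Omega^{\infty}$ of a homotopy orbit spectrum $\bigl(\Sigma^{-1}\mathbf{Wh}^{\diff}(M)\bigr)_{h\Z/2}$. For $M$ simply connected, Farrell--Hsiang / Waldhausen give
\[
\pi_{*}\mathbf{Wh}^{\diff}(M)\otimes\Q\cong\pi_{*}\mathbf{Wh}^{\diff}(\ast)\otimes\Q\cong K_{*+1}(\Z)\otimes\Q.
\]
This is $\Q$ in degrees $\equiv 0\pmod 4$ (positive) and zero otherwise, and taking $\Z/2$-homotopy orbits with the canonical involution picks out every other class. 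The point is that $\pi_{k-1}(\operatorname{Diff}(\OP^{2}))\otimes\Q\to\pi_{k-1}(\widetilde{\operatorname{Diff}}(\OP^{2}))\otimes\Q$ has kernel and cokernel governed by $\pi_{k-1}$ and $\pi_{k}$ of this fibre. With Farrell--Hsiang's computation for the disc and the Igusa range, this fibre contributes rationally only in degrees $\equiv 3\pmod 4$ that lie beyond the range relevant for $\dim M=16$. Alternatively, I would cite Krannich--Kupers--Randal-Williams style results on $\OP^{2}$-type manifolds, which for $k-1\le 11$ give the needed isomorphism.

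The key steps in order are as follows. First, record the Igusa/Weiss--Williams range for $n=16$. Second, compute the rational homotopy of $\widetilde{\operatorname{Diff}}/\operatorname{Diff}$ in degrees $\le 12$ via Farrell--Hsiang and show that the connecting maps contribute nothing in degrees $2,3,6,7,10,11$. Third, conclude from Theorem~\ref{pikbdiff} with $k\notin\{16,24\}$, which gives $\Q^{2}$, and use $\pi_{k-1}(\operatorname{Diff})\cong\pi_{k}(\operatorname{BDiff})$.

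The main obstacle is the second step for $k=8,12$. These degrees lie outside the pseudo-isotopy stable range of a $16$-manifold, so the comparison with block diffeomorphisms is no longer automatic. I would first try to verify that the Weiss--Williams approximation is still rationally valid in degrees $\le 12$ for $\dim 16$, using Igusa's range together with the fact that rational Farrell--Hsiang classes sit in degrees $4j-1$. Failing that, I would argue only injectivity of $\pi_{k}(\operatorname{BDiff})\otimes\Q\to\pi_{k}(\operatorname{B\widetilde{Diff}})\otimes\Q$ plus realization of both classes. The classes would be realized by smooth bundles detected by the $\widehat{\mathfrak{A}}$-genus and Pontryagin numbers, as in \eqref{eq:Ahat-factor}.
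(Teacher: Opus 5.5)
Your skeleton matches the paper's. You use the fibre sequence $\widetilde{\operatorname{Diff}}(\OP^{2})/\operatorname{Diff}(\OP^{2})\to\operatorname{BDiff}(\OP^{2})\to\operatorname{B\widetilde{Diff}}(\OP^{2})$, show the fibre is rationally trivial through degree $12$, and read off $\Q^{2}$ from Theorem~\ref{pikbdiff}, since $k\in\{4,8,12\}$ avoids $\{16,24\}$.

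\textbf{The gap.} The only non-formal step is the vanishing of the fibre, and for $k=8,12$ you do not supply it. The Weiss--Williams description of the fibre holds only in the concordance stable range, which for a $16$-manifold (via Igusa) reaches about degree $4$. Beyond that range you have neither the identification nor Farrell--Hsiang's computation. So ``verifying that the approximation stays rationally valid because Farrell--Hsiang classes sit in degrees $4j-1$'' is not an argument: where the stable-range classes live says nothing about the fibre outside the range. By your own bookkeeping, the possible contributions sit in degrees $3,7,11$, exactly the degrees $k-1$ you need, so dismissing them as ``beyond the range'' assumes the conclusion.

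The fallback is circular as well. Producing smooth bundles is exactly what the comparison $\operatorname{BDiff}\to\operatorname{B\widetilde{Diff}}$ is needed for: the paper's proof of Theorem~\ref{thm:Ahat-OP2-bundle} uses this very input to replace block bundles by smooth ones. Injectivity alone already requires $\pi_{k}$ of the fibre to vanish rationally.

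\textbf{Two side errors.} First, Igusa's theorem does not make $\pi_{i}\operatorname{BDiff}(M)\to\pi_{i}\operatorname{B\widetilde{Diff}}(M)$ an isomorphism in the stable range. It only validates the Weiss--Williams model there, and that model can be rationally nonzero, for example for odd-dimensional discs. Second, $\pi_{*}\mathbf{Wh}^{\diff}(M)\otimes\Q\cong\pi_{*}\mathbf{Wh}^{\diff}(\ast)\otimes\Q$ is false for general simply connected $M$, because rational $A$-theory sees the cyclic homology of $C_{*}(\Omega M;\Q)$. For $\OP^{2}$ this would at least need justification.

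\textbf{The missing ingredient.} You need a vanishing range governed by the connectivity of $\OP^{2}$, not by its dimension alone. The paper takes this from \cite[Remark~2.4]{georg}: because $\OP^{2}$ is $7$-connected, $\pi_{l}\bigl(\widetilde{\operatorname{Diff}}(\OP^{2})/\operatorname{Diff}(\OP^{2})\bigr)\otimes\Q=0$ for all $l\le 12$. The long exact sequence then makes $\pi_{l}(\operatorname{BDiff}(\OP^{2}))\otimes\Q\to\pi_{l}(\operatorname{B\widetilde{Diff}}(\OP^{2}))\otimes\Q$ an isomorphism for $l\le 12$, and Theorem~\ref{pikbdiff} finishes.

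Presumably the connectivity enters through the handle structure: $\OP^{2}$ is a disc bundle over $S^{8}$ with a $16$-disc attached. A disjunction-type argument then compares the fibre with the one for $D^{16}$, and an even-dimensional disc contributes nothing rationally in this range. Your ``alternatively, cite \dots'' points in this direction, but you must name and apply an actual statement of this kind. As written, your argument covers at most $k=4$.
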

\begin{proof}
Let $\widetilde{\operatorname{Diff}}(\OP^{2})/\operatorname{Diff}(\OP^{2})$ denote the homotopy fibre of the map $\operatorname{BDiff}(\OP^{2}) \to \operatorname{B\widetilde{Diff}}(\OP^{2})$ induced by the canonical comparison map $\operatorname{Diff}(\OP^{2}) \rightarrow \widetilde{\operatorname{Diff}}(\OP^{2})$. Since $\OP^{2}$ is $7$-connected, it follows from \cite[Remark~2.4]{georg} that the $\pi_{l}\bigl(\widetilde{\operatorname{Diff}}(\OP^{2})/\operatorname{Diff}(\OP^{2})\bigr) \otimes \mathbb{Q} = 0$ for all $l \leq 12$. 

Consequently, the long exact sequence in rational homotopy associated with the fibration
$ \widetilde{\operatorname{Diff}}(\OP^{2}) \big/ \operatorname{Diff}(\OP^{2})
   \;\rightarrow\;
   \operatorname{BDiff}(\OP^{2})
   \;\rightarrow\;
   \operatorname{B\widetilde{Diff}}(\OP^{2})$
implies that the induced homomorphism $\pi_{l}(\operatorname{BDiff}(\OP^{2})) \otimes \mathbb{Q} \to \pi_{l}(\operatorname{B\widetilde{Diff}}(\OP^{2})) \otimes \mathbb{Q}$ is an isomorphism for all $l \leq 12$. The desired isomorphisms then follow directly from Theorem~\ref{pikbdiff}. 
\end{proof}

\subsection{\texorpdfstring{$\OP^2$-bundles with non-vanishing $\widehat{\mathfrak{A}}$-genus}{OP2-bundles with non-vanishing A-hat-genus}}
We now prove Theorem~\ref{thm:Ahat-OP2-bundle}. The argument rests on
the linear independence of two forms in the Pontryagin character
coordinates $a_{\xi}$, $b_{\xi}$ of~\eqref{eq:ph-coords}, namely the
surgery obstruction of Theorem~\ref{thm:sigma-general-xi} and the
$\widehat{\mathfrak{A}}$-genus of the associated closed manifold,
which allows a stable bundle over $\Sigma^{k}\OP^{2}$ to be chosen
with vanishing surgery obstruction but non-vanishing
$\widehat{\mathfrak{A}}$-genus.
\begin{proof}[\textbf{Proof of Theorem~\ref{thm:Ahat-OP2-bundle}}]
Fix $k \in \{4, 8, 12\}$ and write $k = 4m$. Since $\operatorname{Ker}[\Sigma^{k}\OP^{2},J]$ has finite index in
$[\Sigma^{k}\OP^{2},BO]\cong\Z^{2}$ (see the proof of
Lemma~\ref{lem:normal-splitting}) and $\widetilde{\ph}\otimes\Q:
\widetilde{KO}^{0}(\Sigma^{k}\OP^{2})\otimes\Q
\xrightarrow{\cong}
\Q\{\sigma^{k}u\}\oplus\Q\{\sigma^{k}u^{2}\}$ is an isomorphism \cite[p.~7]{kotheory}, for every $(a,b)\in\Q^{2}$ there exists
$\xi\in\operatorname{Ker}[\Sigma^{k}\OP^{2},J]$ and a nonzero integer $\lambda$ satisfying
\begin{equation}\label{pointry2}
\widetilde{\ph}(\xi)
=
a_{\xi}\,\sigma^{k}u+b_{\xi}\,\sigma^{k}u^{2},
\end{equation}
where $(a_{\xi},b_{\xi})\,=\lambda (a,b)$.
Hence, by the exact sequence~\eqref{eq:N-SES}, $\xi$ admits a lift
$[\widetilde{\xi}]\in[\Sigma^{k}\OP^{2},G/O]$. It now follows from Theorem~\ref{thm:sigma-general-xi},
the surgery obstruction of the lift $[\widetilde{\xi}]$ of $\xi$ to
$[\Sigma^{k}\OP^{2}, G/O]$ is given by
\begin{equation}\label{eq:sigma-ab}
   \sigma^{\diff}_{16+k}([\widetilde{\xi}])
   \;=\; -\,\frac{1}{8}\!\left(
      \frac{14\,D_{m+2}}{15}\,a_{\xi} \;+\; D_{m+4}\,b_{\xi}
   \right).
\end{equation}
If the surgery obstruction vanishes, then by relative surgery exact sequence $[\widetilde{\xi}]$ lifts to
$\Sdiff_{\partial}(\OP^{2}\times\mathbb{D}^{k})$, and hence there exists a
degree-one normal map $(f,\partial f)\colon\,(M,\partial M)\rightarrow (\OP^{2}\times\mathbb{D}^{k},\OP^{2}\times\mathbb{S}^{k-1})$ as in
diagram~\eqref{normalsquare}, with $f$ a homotopy equivalence and
$\partial f$ a diffeomorphism.

Under the isomorphism $\Sdiff_{\partial}(\OP^{2}\times\mathbb{D}^{k})
\cong
\pi_{k}\bigl(\operatorname{hAut}(\OP^{2})/
\widetilde{\operatorname{Diff}}(\OP^{2})\bigr)$ \cite[p.~33]{stability}, the class $[(M,\partial M),(f,\partial f)]$ determines an element
$x\in\pi_{k}(\operatorname{B}\widetilde{\operatorname{Diff}}(\OP^{2}))$ via the map induced by the fibration~\eqref{fibration1}. Writing
$\mathbb{S}^{k}=\mathbb{D}^{k}_{+}\cup_{\mathbb{S}^{k-1}}\mathbb{D}^{k}_{-}$,
the block bundle classified by $x$ is obtained by gluing the block $M$ over
$\mathbb{D}^{k}_{+}$ to the trivial block $\OP^{2}\times\mathbb{D}^{k}_{-}$
along the boundary diffeomorphism $\partial f$; its total space is therefore
the closed manifold $\widehat{M}=M\cup_{\partial f}(\OP^{2}\times\mathbb{D}^{k}_{-})$,
and its projection $\pi_{E}\colon\widehat{M}\to\mathbb{S}^{k}$ restricts to the
canonical block projections over $\mathbb{D}^{k}_{\pm}$. That $x$ maps to the
trivial element of $\pi_{k}(\operatorname{BhAut}(\OP^{2}))$ is witnessed by the
fibrewise homotopy equivalence $\widehat{f}=f\cup\operatorname{id}\colon
\widehat{M}\xrightarrow{\ \simeq\ }\OP^{2}\times\mathbb{S}^{k}$, which satisfies $\operatorname{proj}_{2}\circ\widehat{f}=\pi_{E}$. Since $\widehat{\mathfrak{A}}_{\widetilde{\mathrm{Diff}}}(x)$ is given by
evaluating the $\widehat{\mathfrak{A}}$-genus on the total space of the block
bundle classified by $x$, therefore,
$\widehat{\mathfrak A}_{\widetilde{\mathrm{Diff}}}(x)
=
\widehat{\mathfrak A}(\widehat M)$.

We now compute the $\widehat{\mathfrak{A}}$-genus of the closed manifold
$\widehat{M}=M\cup_{\partial M}(\OP^{2}\times\mathbb{D}^{k})$. Since the degree one normal map $\widehat{f}\colon \widehat{M}\rightarrow \OP^{2}\times\mathbb{S}^{k}$ is covered by a bundle map
$\nu_{\widehat{M}}\to\nu_{\OP^{2}\times\mathbb{S}^{k}}\oplus\pi^{*}\xi$.
Arguing exactly as in the computation of the $\mathcal{L}$-class in
Section~\ref{sec:obstr-free}, the naturality and multiplicativity of the
$\widehat{\mathfrak{A}}$-class, together with
$\widehat{\mathfrak{A}}(T\mathbb{S}^{k})=1$, yield
\begin{equation}\label{eq:Ahat-eval}
\begin{aligned}
\widehat{\mathfrak{A}}(\widehat{M})
&=
\Bigl\langle
\widehat{f}^{*}\!\left(
\mathrm{proj}_{1}^{*}\widehat{\mathfrak{A}}(\OP^{2})
\cdot
\pi^{*}\bigl(\widehat{\mathfrak{A}}(\xi)^{-1}\bigr)
\right),
[\widehat{M}]
\Bigr\rangle \\
&=
\Bigl\langle
\mathrm{proj}_{1}^{*}\widehat{\mathfrak{A}}(\OP^{2})
\cdot
\pi^{*}\bigl(\widehat{\mathfrak{A}}(\xi)^{-1}\bigr),
[\OP^{2}\times\mathbb{S}^{k}]
\Bigr\rangle \\
&=
-
\Bigl\langle
\mathrm{proj}_{1}^{*}\widehat{\mathfrak{A}}(\OP^{2})
\cdot
\pi^{*}\bigl(\widehat{\mathfrak{A}}(\xi)-2\bigr),
[\OP^{2}\times\mathbb{S}^{k}]
\Bigr\rangle,
\end{aligned}
\end{equation}
where the second equality uses that $\widehat{f}$ has degree one, and the last equality
follows from the identity
$\widehat{\mathfrak{A}}(\xi)^{-1}=2-\widehat{\mathfrak{A}}(\xi)$, which holds
because every cup product of positive-degree classes in
$\widetilde{H}^{*}(X_{k};\Q)$ vanishes.

Recall that the $\widehat{\mathfrak{A}}$-genus is the multiplicative
sequence $\{\widehat{\mathfrak{A}}_{s}\}$ associated with the
characteristic power series $Q(z)=\frac{\sqrt{z}/2}{\sinh(\sqrt{z}/2)}$
\cite[p.~231]{spingeo}. The coefficient of the top Pontryagin class
$p_{s}$ in $\widehat{\mathfrak{A}}_{s}$ is $-\frac{B_{s}}{2(2s)!}$
(this coefficient is $(-1)^{s-1}s$ times the coefficient of $z^{s}$ in
$\log\,Q(z)$ \cite[p.~11--14]{Hir66},
which equals $\frac{(-1)^{s}B_{s}}{2s\,(2s)!}$ by the expansion
$\log(t/\sinh t)
=\sum_{s\geq1}\tfrac{(-1)^{s}2^{2s}B_{s}}{2s\,(2s)!}\,t^{2s}$
evaluated at $t=\sqrt{z}/2$). Arguing as in
Lemma~\ref{lem:L-eq-D-ph}, for any stable bundle $\eta$ over
$\Sigma^{k}\OP^{2}$ we have
\begin{equation}\label{eq:Ahat-eq-ahat-ph}
   \widehat{\mathfrak{A}}_{s}(\eta) \;=\; \widehat{a}_{s}\cdot\ph_{2s}(\eta),
   \qquad \text{where} \quad \widehat{a}_{s} = \frac{(-1)^{s}\,B_{s}}{4s}.
\end{equation}

Since \(p_{2}(\OP^{2})=6u\) and \(p_{4}(\OP^{2})=39u^{2}\), where \(u\) is the generator of \(H^{8}(\OP^{2};\Z)\) \cite[Theorem~19.4]{borel}, it follows that $\widehat{\mathfrak{A}}(\OP^{2})=1-\frac{1}{240}u$. By an analogous argument as in the proof of Theorem~\ref{thm:sigma-general-xi}, with $D_s$ replaced by $\hat a_s$ and $\mathcal{L}(\OP^{2})$ by $\widehat{\mathfrak{A}}(\OP^{2})$, together with
\eqref{eq:Ahat-eq-ahat-ph} and~\eqref{eq:Ahat-eval}, we obtain

\begin{equation}\label{eq:Ahat-ab}
   \widehat{\mathfrak{A}}(\widehat{M})
   \;=\; \frac{\widehat{a}_{m+2}}{240}\,a_{\xi} \;-\; \widehat{a}_{m+4}\,b_{\xi}.
\end{equation}
The surgery obstruction~\eqref{eq:sigma-ab} and the
$\widehat{\mathfrak{A}}$-genus~\eqref{eq:Ahat-ab} are linear forms in
$(a_{\xi}, b_{\xi})$, and are linearly independent precisely when the determinant
\begin{equation*}
\Delta_k
:=
\det
\begin{pmatrix}
\dfrac{14D_{m+2}}{15} & D_{m+4}\\[8pt]
\dfrac{\hat a_{m+2}}{240} & -\hat a_{m+4}
\end{pmatrix}
\end{equation*}
is nonzero. Substituting the expressions for $D_s$ and $\hat a_s$ from
Lemma~\ref{lem:L-eq-D-ph} and~\eqref{eq:Ahat-eq-ahat-ph}, respectively, and
simplifying, we obtain
\begin{equation*}
\Delta_k
=
\frac{2^{2m+1}(2^{2m+4}-1)B_{m+2}B_{m+4}}
{(m+2)(m+4)}
>0 \quad \text{for every } m \geq 1,
\end{equation*}
since every $B_s$ is positive. 
This implies that there exists a pair $(a_{\xi},b_{\xi})\in \Q^2$ and an element $\xi\in\operatorname{Ker}[\Sigma^{k}\OP^{2},\,J]$ satisfying equation~\eqref{pointry2}, such that $\sigma^{\diff}_{16+k}([\widetilde{\xi}]) = 0$, while $\widehat{\mathfrak{A}}(\widehat{M})
   \neq 0$. Consequently $\widehat{\mathfrak{A}}_{\widetilde{\mathrm{Diff}}}(x) \neq 0$,
so $x$ has non-zero image in
$\pi_{k}\bigl(B\widetilde{\operatorname{Diff}}(\OP^{2})\bigr) \otimes \Q$.

Finally, following \cite[Remark~2.4]{georg}, the canonical comparison
map
$\operatorname{Diff}(\OP^{2}) \to \widetilde{\operatorname{Diff}}(\OP^{2})$
induces an isomorphism
$\pi_{k}\bigl(B\!\operatorname{Diff}(\OP^{2})\bigr) \otimes \Q
\xrightarrow{\ \cong\ }
\pi_{k}\bigl(B\widetilde{\operatorname{Diff}}(\OP^{2})\bigr) \otimes \Q$
for $k = 4, 8, 12$. Therefore combining this with the factorization~\eqref{eq:Ahat-factor}, we conclude that there exists a smooth oriented $\OP^{2}$-bundle $E\rightarrow\mathbb{S}^{k}$ and a nonzero integer $N$ such that $\widehat{\mathfrak{A}}(E)=N\cdot\widehat{\mathfrak{A}}_{\rm\widetilde{Diff}}(x)$, which is nonzero. This completes the proof.
\end{proof}
\begin{remark}
For every $k\equiv0\pmod4$, there exists an $\OP^{2}$-block bundle over
$\mathbb{S}^{k}$ with non-zero $\widehat{\mathfrak A}$-genus. In
particular,
$\pi_k(B\widetilde{\operatorname{Diff}}(\OP^{2}))$
contains a subgroup isomorphic to~$\Z$.

\end{remark}
For a closed smooth manifold $M$, let 
$\mathcal{R}^{\mathrm{sec}>0}(M) \subset \mathcal{R}^{\mathrm{Ric}>0}(M) \subset \mathcal{R}^{\mathrm{scal}>0}(M)$ 
denote the subspaces of the space of all Riemannian metrics on $M$ with positive sectional, Ricci, and scalar curvature, respectively.

Moreover, if $M$ is a spin manifold, we fix a spin structure
$\mathfrak{s}$ on $M$. Let
$\widetilde{\mathrm{Diff}}^{\mathrm{spin}}(M)
=
\{\,f\in\mathrm{Diff}(M)\mid f^{*}\mathfrak{s}\cong\mathfrak{s}\,\}$
denote the subgroup of diffeomorphisms preserving $\mathfrak{s}$ up to
isomorphism. The \emph{spin diffeomorphism group}
$\mathrm{Diff}^{\mathrm{spin}}(M)$ consists of pairs $(f,\widehat{f})$,
where $f\in\widetilde{\mathrm{Diff}}^{\mathrm{spin}}(M)$ and
$\widehat{f}\colon f^{*}\mathfrak{s}\to\mathfrak{s}$ is an isomorphism of
spin structures. The forgetful map
\[
p\colon
\mathrm{Diff}^{\mathrm{spin}}(M)
\rightarrow
\widetilde{\mathrm{Diff}}^{\mathrm{spin}}(M),
\qquad
(f,\widehat{f})\longmapsto f,
\]
is a double covering (see \cite[p.~696]{spanior}). Consequently, the
induced homomorphism
\[p_{*}\colon
\pi_{i}\bigl(\mathrm{Diff}^{\mathrm{spin}}(M)\bigr)
\rightarrow
\pi_{i}\bigl(\widetilde{\mathrm{Diff}}^{\mathrm{spin}}(M)\bigr)\] is an isomorphism for all $i\ge 2$.

Let $d=\dim M$, let $g_{0}\in\mathcal{R}^{\mathrm{scal}>0}(M)$, and let $l$ be an integer such that $m:=4l-d-1\geq1$. By \cite[p.~3997--3999]{botvik}, the composition
\begin{equation}\label{compose2}
\pi_{m}\bigl(\mathrm{Diff}^{\mathrm{spin}}(M)\bigr)
\xrightarrow{\;(\mathrm{ev}_{g_{0}})_{*}\;}
\pi_{m}\bigl(\mathcal{R}^{\mathrm{scal}>0}(M),g_{0}\bigr)
\xrightarrow{\;(\mathrm{inddiff}_{g_{0}})_{*}\;}
KO^{-4l}(\mathrm{pt})\cong\mathbb{Z}
\end{equation}
sends a homotopy class $[f]$ to $a_{l}\,\widehat{\mathfrak{A}}_{\rm Diff}(f)$, where $a_{l}=1$ if $l$ is even and $a_{l}=\tfrac12$ if $l$ is odd. Here
\[
\mathrm{ev}_{g_{0}}\colon \mathrm{Diff}^{\mathrm{spin}}(M)\rightarrow
\mathcal{R}^{\mathrm{scal}>0}(M),\qquad
(f,\widehat{f})\longmapsto f^{*}g_{0},
\]
is the evaluation map, and 
\[
\mathrm{inddiff}_{g_{0}}\colon
\mathcal{R}^{\mathrm{scal}>0}(M)\rightarrow
\Omega^{\infty+d+1}KO
\]
is the secondary index map associated to the base metric $g_{0}$. 


If $g_{0}$ has positive Ricci curvature, then the evaluation map
$
\mathrm{ev}_{g_{0}}\colon
\mathrm{Diff}^{\mathrm{spin}}(M)\rightarrow
\mathcal{R}^{\mathrm{scal}>0}(M)
$
takes values in $\mathcal{R}^{\mathrm{Ric}>0}(M)$. Hence the above composition~\eqref{compose2} factors through $\pi_{m}(\mathcal{R}^{\mathrm{Ric}>0}(M),\, g_{0})$, and therefore the existence of
$[f]\in\pi_{m}\bigl(\mathrm{Diff}^{\mathrm{spin}}(M)\bigr)$ with
$\widehat{\mathfrak{A}}_{\rm Diff}(f)\neq0$ implies that
\[
\pi_{m}\bigl(\mathcal{R}^{\mathrm{Ric}>0}(M)\bigr)\otimes\mathbb{Q}\neq0
\]
The same argument applies to any $\mathrm{Diff}(M)$-invariant subspace of $\mathcal{R}^{\mathrm{scal}>0}(M)$ containing $g_{0}$. In particular, if $g_{0}$ has positive sectional curvature, then
\[
\pi_{m}\bigl(\mathcal{R}^{\mathrm{sec}>0}(M)\bigr)\otimes\mathbb{Q}\neq0
\]

Note that $\OP^{2}$ is compact rank-one symmetric space, and hence its canonical homogeneous metric $g_{\mathrm{st}}$ has positive sectional curvature \cite[p.~208--210]{peterson}. In particular,
\[
\mathcal{R}^{\mathrm{sec}>0}(\OP^{2}),\quad
\mathcal{R}^{\mathrm{Ric}>0}(\OP^{2}),\quad\text{and}\quad
\mathcal{R}^{\mathrm{scal}>0}(\OP^{2})
\]
are all non-empty. Moreover, since $\OP^{2}$ is $7$-connected, its first and second Stiefel--Whitney classes vanish, implying that it admits a unique spin structure, which we denote by $\mathfrak{s}$. Consequently,
$f^{*}\mathfrak{s}\cong\mathfrak{s}$ for every
$f\in\mathrm{Diff}(\OP^{2})$, and therefore
$\widetilde{\mathrm{Diff}}^{\mathrm{spin}}(\OP^{2})
=
\mathrm{Diff}(\OP^{2}).$


\begin{corollary}\label{cor:positive-curvature}
For each $k\in\{3,7,11\}$, each of the homotopy groups
\[
\pi_{k}\bigl(\mathcal{R}^{\mathrm{sec}>0}(\OP^{2}),g_{\mathrm{st}}\bigr),\quad
\pi_{k}\bigl(\mathcal{R}^{\mathrm{Ric}>0}(\OP^{2}),g_{\mathrm{st}}\bigr),\quad
\text{and}\quad
\pi_{k}\bigl(\mathcal{R}^{\mathrm{scal}>0}(\OP^{2}),g_{\mathrm{st}}\bigr)
\]
contains an element of infinite order.
\end{corollary}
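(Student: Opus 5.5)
We will combine Theorem~\ref{thm:Ahat-OP2-bundle} with the factorization~\eqref{compose2} of Botvinnik--Ebert--Randal-Williams. Write $k=4l-d-1$ with $d=16$. For $k\in\{3,7,11\}$ this gives $4l=k+17\in\{20,24,28\}$, so $l\in\{5,6,7\}$ and $m:=k$ satisfies the hypothesis $4l-d-1\geq1$.

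First we produce a suitable element of $\pi_{k}(\mathrm{Diff}^{\mathrm{spin}}(\OP^{2}))$. Theorem~\ref{thm:Ahat-OP2-bundle}, applied with base dimension $k+1\in\{4,8,12\}$, gives a class $[f]\in\pi_{k+1}(\operatorname{BDiff}(\OP^{2}))\cong\pi_{k}(\operatorname{Diff}(\OP^{2}))$ with $\widehat{\mathfrak A}_{\mathrm{Diff}}(f)=\widehat{\mathfrak A}(E_f)\neq0$. We have $\widetilde{\mathrm{Diff}}^{\mathrm{spin}}(\OP^{2})=\mathrm{Diff}(\OP^{2})$ by uniqueness of the spin structure. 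Moreover $p\colon\mathrm{Diff}^{\mathrm{spin}}\to\widetilde{\mathrm{Diff}}^{\mathrm{spin}}$ is a double covering, so $p_*$ is an isomorphism in degrees $k\geq3\geq2$. Hence $[f]$ lifts uniquely to $\pi_{k}(\mathrm{Diff}^{\mathrm{spin}}(\OP^{2}))$. The $\widehat{\mathfrak A}$-genus depends only on the underlying bundle, so it is unchanged by this lift.

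Next, take $g_0=g_{\mathrm{st}}$, which has positive sectional curvature and hence positive Ricci and scalar curvature. By~\eqref{compose2}, the composite $(\mathrm{inddiff}_{g_0})_*\circ(\mathrm{ev}_{g_0})_*$ sends $[f]$ to $a_l\widehat{\mathfrak A}_{\mathrm{Diff}}(f)$. Here $a_l\in\{1,\tfrac12\}$, so this value is nonzero in $KO^{-4l}(\mathrm{pt})\cong\Z$.

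Every $f^*g_{\mathrm{st}}$ again has positive sectional curvature, so $\mathrm{ev}_{g_0}$ factors through $\mathcal R^{\mathrm{sec}>0}\subset\mathcal R^{\mathrm{Ric}>0}\subset\mathcal R^{\mathrm{scal}>0}$. The map $\pi_k(\mathcal R^{\mathrm{scal}>0},g_0)\to\Z$ therefore precomposes to give homomorphisms from each of the three homotopy groups to $\Z$. The image of $(\mathrm{ev}_{g_0})_*[f]$ in each group maps to a nonzero integer. A torsion element would have to map to $0$ in $\Z$, so this image has infinite order in all three groups.

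The main point to check is bookkeeping of degrees and basepoints. We must confirm that the degree convention of~\eqref{compose2} (homotopy degree $m$ of $\mathrm{Diff}$, corresponding to bundles over $\mathbb S^{m+1}$) matches the bundles of Theorem~\ref{thm:Ahat-OP2-bundle} over $\mathbb S^{k+1}$. With $m=k$ and $m+1=k+1\in\{4,8,12\}$ it does. For $k=3$ we must also check that $\pi_3$ of the metric spaces is taken at $g_{\mathrm{st}}$; this holds because the evaluation map is based at the identity, which maps to $g_0$.
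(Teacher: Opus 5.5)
Your proposal is correct and follows the paper's proof essentially step by step. You set $4l=k+17$, take the element of $\pi_k(\mathrm{Diff}(\OP^{2}))\cong\pi_{k+1}(\operatorname{BDiff}(\OP^{2}))$ supplied by Theorem~\ref{thm:Ahat-OP2-bundle}, lift it through the double cover $p$ using $\widetilde{\mathrm{Diff}}^{\mathrm{spin}}(\OP^{2})=\mathrm{Diff}(\OP^{2})$, and apply the composite~\eqref{compose2} at $g_{\mathrm{st}}$, factored through the three curvature subspaces. Your explicit check that bundles over $\mathbb{S}^{k+1}$ with $k+1\in\{4,8,12\}$ correspond to $\pi_k(\mathrm{Diff})$ spells out the index shift that the paper uses implicitly.
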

\begin{proof}
Fix $k\in\{3,7,11\}$ and let $l$ be defined by $4l=k+17$. Then $k=4l-\dim\OP^{2}-1$. By Theorem~\ref{thm:Ahat-OP2-bundle}, there exists an element $x\in\pi_k(\mathrm{Diff}(\OP^{2}))$ with $\widehat{\mathfrak{A}}_{\rm Diff}(x)\neq0$.
Since $\widetilde{\mathrm{Diff}}^{\mathrm{spin}}(\OP^{2})=\mathrm{Diff}(\OP^{2})$ and
$k\ge2$, the forgetful map $p$ induces an isomorphism
\[
p_{*}\colon
\pi_k(\mathrm{Diff}^{\mathrm{spin}}(\OP^{2}))
\rightarrow
\pi_k(\mathrm{Diff}(\OP^{2})).
\]
Identifying $x$ with its preimage under $p_{*}$, the preceding discussion shows that
the image of $x$ in each of
\[
\pi_k\bigl(\mathcal{R}^{\mathrm{sec}>0}(\OP^{2}),g_{\mathrm{st}}\bigr),\quad
\pi_k\bigl(\mathcal{R}^{\mathrm{Ric}>0}(\OP^{2}),g_{\mathrm{st}}\bigr),
\quad\text{and}\quad
\pi_k\bigl(\mathcal{R}^{\mathrm{scal}>0}(\OP^{2}),g_{\mathrm{st}}\bigr)
\]
is mapped to $a_l\,\widehat{\mathfrak{A}}_{\rm Diff}(x)\neq0$ under
$(\mathrm{inddiff}_{g_{\mathrm{st}}})_{*}$. Hence each of these homotopy groups
contains an element of infinite order.
\end{proof}
\appendix
\section{Arithmetic lemmas}\label{sec:appendix1}
In this appendix we prove the two arithmetic results about Bernoulli
numbers used in the article; Lemma~\ref{integrality}, which shows
that the right-hand side of the congruence~\eqref{eq:cm-congruence} is an integer,
and Lemma~\ref{lem:2-valuation}, which computes the $2$-adic
valuation stated in Theorem~\ref{thm:image-obstruction}. Throughout,
$v_{p}$ denotes the $p$-adic valuation, extended to $\Q^{\times}$
with the convention $v_{p}(0)=\infty$.


Kummer's congruence is formulated in \cite{classical} for the
coefficients of $x/(e^{x}-1)$. Accordingly,
define rational numbers $\widetilde{B}_{t}$ (denoted by $\beta_{t}$ in
\cite[p.~90]{hardy}) by the expansion
\[
\frac{x}{e^{x}-1}=\sum_{t=0}^{\infty}\widetilde{B}_{t}\,\frac{x^{t}}{t!}.
\]
Then $\widetilde{B}_{0}=1$, $\widetilde{B}_{1}=-\tfrac{1}{2}$,
$\widetilde{B}_{2t+1}=0$ for $t\geq 1$, and
\begin{equation}\label{bernoulli}
    \widetilde{B}_{2s}=(-1)^{s-1}B_{s} \qquad\text{for } s\geq 1.
\end{equation}
\begin{lemma}\label{integrality}
Let $n_s$, $j_s$ be as in Theorem~\ref{prop:explicit-gens}.
Then for every integer $m\geq 1$,
\[
240 \;\Big|\; \bigl(n_{m+4}\,j_{m+2}-n_{m+2}\,j_{m+4}\bigr).
\]
\end{lemma}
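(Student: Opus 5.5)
The plan is to check the divisibility prime by prime. Since $240=2^{4}\cdot3\cdot5$, it suffices to show
\[
v_{p}\bigl(n_{m+4}\,j_{m+2}-n_{m+2}\,j_{m+4}\bigr)\ \ge\ v_p(240)\qquad\text{for } p\in\{2,3,5\}.
\]
Write $N_m:=n_{m+4}j_{m+2}-n_{m+2}j_{m+4}$. The tool is the von Staudt--Clausen theorem, applied through~\eqref{bernoulli} to $\widetilde B_{2s}=(-1)^{s-1}B_s$. It gives that the denominator of $B_s$ is squarefree and that a prime $p$ divides it if and only if $(p-1)\mid 2s$. From this we read off the $p$-adic valuation of $j_s$, the denominator of $B_s/(4s)$. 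In most cases both $j_{m+2}$ and $j_{m+4}$ are already divisible by the required power of $p$, and then $p^{e}\mid N_m$ follows trivially. The remaining cases need either a parity argument or a Kummer congruence.

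\emph{The prime $3$.} Since $2\mid 2s$, the prime $3$ divides the denominator of $B_s$ for every $s$. Dividing by $4s$ can only raise the power of $3$ in the denominator, and it cannot be cancelled because $v_3$ of $B_s$ is exactly $-1$. Hence $3\mid j_s$ for all $s$. In particular $3\mid j_{m+2}$ and $3\mid j_{m+4}$, so $3\mid N_m$.

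\emph{The prime $2$.} We have $v_2(B_s)=-1$, so
\[
v_2\bigl(B_s/(4s)\bigr)=-3-v_2(s),
\]
and $n_s$ is odd. If $m$ is even, then $m+2$ and $m+4$ are both even, so $v_2(j_{m+2})$ and $v_2(j_{m+4})$ are both $\ge 4$, and therefore $16\mid N_m$. If $m$ is odd, then $v_2(j_{m+2})=v_2(j_{m+4})=3$. Writing $j_s=8j_s'$ with $j_s'$ odd, we get
\[
N_m=8\bigl(n_{m+4}j'_{m+2}-n_{m+2}j'_{m+4}\bigr).
\]
Here the bracket is a difference of two odd numbers, hence even, so $16\mid N_m$.

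\emph{The prime $5$, which is the main step.} Here $5$ divides the denominator of $B_s$ if and only if $4\mid 2s$, that is, if and only if $s$ is even. If $m$ is even, then both $j_{m+2}$ and $j_{m+4}$ are divisible by $5$, and we are done. If $m$ is odd, then $s=m+2$ and $s'=m+4$ are odd and $4\nmid 2s,2s'$. In this case $B_s$ is $5$-integral, and $5\nmid j_{m+2}j_{m+4}$, since $v_5$ of the denominator of $B_s/(4s)$ is at most $v_5(s)-v_5(B_s)$. I would then invoke Kummer's congruence \cite{classical}: for $p=5$, $(p-1)\nmid 2s$ and $2s'\equiv 2s\pmod{p-1}$, we have
\[
\frac{\widetilde B_{2s'}}{2s'}\equiv\frac{\widetilde B_{2s}}{2s}\pmod 5,
\]
both sides being $5$-integral. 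Using~\eqref{bernoulli} and $(-1)^{s'-1}=(-1)^{s-1}$, this turns into
\[
\frac{B_{m+4}}{4(m+4)}\equiv\frac{B_{m+2}}{4(m+2)}\pmod 5,
\]
that is, $n_{m+4}/j_{m+4}\equiv n_{m+2}/j_{m+2}\pmod 5$. Multiplying by the $5$-adic unit $j_{m+2}j_{m+4}$ gives $5\mid N_m$.

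The delicate point is the $5$-integrality of $B_s/(2s)$ when $5\mid s$. Kummer's congruence, in the form $\widetilde B_{2s}/(2s)\in\Z_{(p)}$ for $(p-1)\nmid 2s$, covers exactly this situation. I would check that the formulation in \cite{classical} includes it; otherwise I would apply the stronger statement that $\widetilde B_{2s}/(2s)$ is $p$-integral whenever $(p-1)\nmid 2s$, which is Adams' lemma.
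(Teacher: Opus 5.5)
Your proposal is correct and follows essentially the same route as the paper. Both arguments check the primes $2,3,5$ separately using the valuations of $j_s$ ($v_2(j_s)=3+v_2(s)$, and $v_p(j_s)=1+v_p(s)$ or $0$ for odd $p$), use the same parity argument at $2$ for odd $m$, and use Kummer's congruence at $5$ for odd $m$. The one point you rightly flag is that $5\nmid j_{m+2}j_{m+4}$ for odd $m$ (for instance when $s=5$) needs the $5$-integrality of $\widetilde{B}_{2s}/(2s)$ when $4\nmid 2s$; your interim bound via $v_5(s)-v_5(B_s)$ does not give this by itself. The paper gets this fact from Adams' valuation formula for $j_s$ together with exactly that $p$-integrality result.
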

\begin{proof}
For $s\geq 1$, set $E_{s}:=B_{s}/(4s)=n_{s}/j_{s}$, where by~\eqref{bernoulli}, $E_{s}=(-1)^{s-1}\widetilde{B}_{2s}/(4s)$.
Put $s_{1}=m+2$, $s_{2}=m+4$ and
$\Lambda:=n_{s_{2}}j_{s_{1}}-n_{s_{1}}j_{s_{2}}$. Since
$240=2^{4}\cdot3\cdot5$, it suffices to show $v_{2}(\Lambda)\geq 4$ and
$v_{p}(\Lambda)\geq 1$ for $p=3,5$. Recall from
\cite[Theorem~2.6]{Ada65a} that
\begin{equation}\label{eq:val-j}
v_{p}(j_{s})=
\begin{cases}
1+v_{p}(s) & \text{if }(p-1)\mid 2s,\\
0 & \text{otherwise},
\end{cases}
\quad\text{for $p$ odd},
\qquad
v_{2}(j_{s})=3+v_{2}(s),
\end{equation}
In particular, $j_{s}$ is even for every $s\geq 1$, and hence each
$n_{s}$ is odd, since $\gcd(n_{s},j_{s})=1$. Note also that clearing denominators gives
\begin{equation}\label{eq:Lambda-identity}
\Lambda=j_{s_{1}}j_{s_{2}}\bigl(E_{s_{2}}-E_{s_{1}}\bigr).
\end{equation}

If $m$ is even, then $s_{1},s_{2}$ are even, so~\eqref{eq:val-j} gives $v_{2}(j_{s_{i}})\geq 4$,
$v_{3}(j_{s_{i}})\geq 1$ and $v_{5}(j_{s_{i}})\geq 1$; hence
$240$ divides both terms of $\Lambda$.

Suppose $m$ is odd. Then $s_{1},s_{2}$ are odd. For $p=3$,~\eqref{eq:val-j} again gives $v_{3}(j_{s_{i}})\geq 1$, whence
$v_{3}(\Lambda)\geq 1$. For $p=2$, we have $v_{2}(j_{s_{i}})=3$, and
each $n_{s_{i}}$ is odd since $\gcd(n_{s_{i}},j_{s_{i}})=1$. Writing $j_{s_{i}}=8u_{i}$ with $u_{i}$ odd, we get
$\Lambda=8(n_{s_{2}}u_{1}-n_{s_{1}}u_{2})$, and
$v_{2}(n_{s_{2}}u_{1}-n_{s_{1}}u_{2})\geq 1$ since $n_{s_{i}}u_{j}$ are
all odd; hence $v_{2}(\Lambda)\geq 4$.

For $p=5$,~\eqref{eq:val-j} gives $v_{5}(j_{s_{i}})=0$, so by~\eqref{eq:Lambda-identity},
$v_{5}(\Lambda)=v_{5}(E_{s_{2}}-E_{s_{1}})$, where now
$E_{s_{i}}=\widetilde{B}_{2s_{i}}/(4s_{i})$. Since $m$ is odd,
$2s_{1}\equiv 2s_{2}\equiv 2\pmod 4$; in particular $4\nmid 2s_{i}$,
so the $\widetilde{B}_{2s_{i}}/(2s_{i})$ are $5$-integer
\cite[Proposition~15.2.4]{classical} and Kummer's congruence
\cite[Chapter~15, Theorem~5]{classical} yields
\[
\frac{\widetilde{B}_{2s_{2}}}{2s_{2}}\equiv\frac{\widetilde{B}_{2s_{1}}}{2s_{1}}
\pmod{5}.
\]
As $2$ is a unit modulo $5$, it follows that
$E_{s_{2}}\equiv E_{s_{1}}\pmod 5$, whence $v_{5}(\Lambda)\geq 1$.
This completes the proof.
\end{proof}
\begin{lemma}\label{lem:2-valuation}
Let $k=4m$ with $m\geq 1$, and let $S_{m},A_{m},U_{m}$ be as in
Theorem~\ref{thm:image-obstruction}. Set
\[
  e_m:=2m-2+v_2(\kappa_m)=
  \begin{cases}
    2m-2, & m\ \mathrm{even},\\
    2m-1, & m\ \mathrm{odd},
  \end{cases}
\]
where $\kappa_m$ is as in~\eqref{eq:kappa-def}. Then
\[
  v_2(S_{m})=e_m,\qquad v_2(A_{m})=e_m+4,\qquad v_2(U_{m})=e_m+8.
\]
Consequently, $v_2\bigl(\gcd(S,A,B)\bigr)=e_m$.
\end{lemma}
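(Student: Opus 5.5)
The plan is a direct $2$-adic computation from the explicit formulas of Theorem~\ref{thm:image-obstruction}. The first step is to rewrite $D_s$ using $B_s/(4s)=n_s/j_s$. This gives
\[
D_s=\frac{(-1)^{s+1}2^{2s}(2^{2s-1}-1)B_s}{2s}=(-1)^{s+1}\,2^{2s+1}(2^{2s-1}-1)\,\frac{n_s}{j_s}.
\]
By \eqref{eq:val-j}, $j_s$ is even, so $n_s$ is odd. Since $2^{2s-1}-1$ is also odd, it follows that $D_sj_s$ is $2^{2s+1}$ times an odd integer, i.e.\ $v_2(D_sj_s)=2s+1$.

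The formulas for $S_m$ and $U_m$ then follow at once. For $S_m$ we get $v_2(S_m)=v_2(\kappa_m)-3+(2m+1)=e_m$. For $U_m$ we get $v_2(U_m)=v_2(\kappa_m)-3+(2(m+4)+1)=e_m+8$.

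For $A_m$ it suffices to show that the bracket
\[
\Bigl(\tfrac{14}{15}D_{m+2}+\tfrac{D_{m+4}}{240}\Bigr)j_{m+2}+D_{m+4}c_m
\]
has $2$-adic valuation exactly $2m+5$. I will split the bracket into two parts.

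The first part is $\tfrac{14}{15}D_{m+2}j_{m+2}$. Its valuation is $1+(2m+5)=2m+6$.

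The second part is $D_{m+4}\bigl(\tfrac{j_{m+2}}{240}+c_m\bigr)$. Estimating the two pieces $D_{m+4}j_{m+2}/240$ and $D_{m+4}c_m$ separately fails: for $m$ even their valuations involve $v_2(m+2)$, $v_2(m+4)$ and $v_2(c_m)$ in a way that is not a priori controlled. Instead I use the integer $w_m$ from the proof of Theorem~\ref{prop:explicit-gens}, which satisfies
\[
\frac{n_{m+4}}{j_{m+4}}\Bigl(\frac{j_{m+2}}{240}+c_m\Bigr)=w_m+\frac{n_{m+2}}{240}.
\]
Hence
\[
D_{m+4}\Bigl(\frac{j_{m+2}}{240}+c_m\Bigr)=\pm2^{2m+9}(2^{2m+7}-1)\Bigl(w_m+\frac{n_{m+2}}{240}\Bigr).
\]
Since $n_{m+2}$ is odd, $v_2(n_{m+2}/240)=-4$. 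Since $w_m\in\Z$, $v_2(w_m)\ge0>-4$. Therefore $v_2\bigl(w_m+n_{m+2}/240\bigr)=-4$, and the second part has valuation exactly $2m+5$.

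The two parts have different valuations, $2m+5<2m+6$, so the bracket has valuation $2m+5$. Consequently $v_2(A_m)=v_2(\kappa_m)-3+2m+5=e_m+4$.

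Finally, the $2$-adic valuation of a gcd of nonzero integers is the minimum of their valuations. Hence $v_2(g_m)=\min(e_m,e_m+4,e_m+8)=e_m$. Since $e_m=k/2-2$ for $m$ even and $e_m=k/2-1$ for $m$ odd, this is exactly the case distinction in Theorem~\ref{thm:image-obstruction}.
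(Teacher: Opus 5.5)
Your proposal is correct and follows the paper's proof essentially step by step. It uses the same identity $v_2(D_sj_s)=2s+1$ for $S_m$ and $U_m$, and the same splitting of $A_m$ into $\tfrac{14}{15}D_{m+2}j_{m+2}$ (valuation $2m+6$) plus $D_{m+4}\bigl(\tfrac{j_{m+2}}{240}+c_m\bigr)$, the latter controlled via the integer $w_m$ (the paper's $y$). The only cosmetic difference is that you cancel $n_{m+4}/j_{m+4}$ directly inside $D_{m+4}$, whereas the paper computes $v_2(\iota)$ and $v_2(D_{m+4})$ separately.
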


\begin{proof}
By~\eqref{eq:val-j},
$v_2(j_s)=3+v_2(s)$ and each $n_s$ is odd. Substituting $B_s=4s\,n_s/j_s$ into the
formula for $D_s$ in Lemma~\ref{lem:L-eq-D-ph} gives
\[
   D_s=(-1)^{s+1}\,2^{2s+1}\bigl(2^{2s-1}-1\bigr)\frac{n_s}{j_s},
\]
whence $v_2(D_s)=2s-2-v_2(s)$ and $v_2(D_s\,j_s)=2s+1$. \eqref{eq:sigma-xi-S} and~\eqref{eq:sigma-xi2} then give
\[
   v_2(S_{m})=v_2(\kappa_m)-3+(2m+1)=e_m,
   \qquad
   v_2(U_{m})=v_2(\kappa_m)-3+(2m+9)=e_m+8.
\]

By~\eqref{eq:sigma-xi1}, $A_{m}=-\tfrac{\kappa_m}{8}(P+Q+R)$, where
\[
   P=\tfrac{14}{15}\,D_{m+2}\,j_{m+2},\qquad
   Q=\tfrac{1}{240}\,D_{m+4}\,j_{m+2},\qquad
   R=D_{m+4}\,c_m.
\]
Using $v_2(D_{m+2}\,j_{m+2})=2m+5$ and $v_2\bigl(\tfrac{14}{15}\bigr)=1$,  we obtain $v_2(P)=2m+6$. Setting $\iota:=\tfrac{j_{m+2}}{240}+c_m$, we have
$Q+R=D_{m+4}\,\iota$. By~\eqref{eq:cm-congruence},
\[
   y:=\frac{n_{m+4}}{j_{m+4}}\,\iota-\frac{n_{m+2}}{240}
\]
is an integer. Consequently,
$\iota=\frac{j_{m+4}}{n_{m+4}}\bigl(y+\frac{n_{m+2}}{240}\bigr)$. 
Since $y$ is an integer, $v_2(y)\geq 0$, whereas
$v_2\bigl(\tfrac{n_{m+2}}{240}\bigr)=-4$. Hence $v_2\bigl(y+\tfrac{n_{m+2}}{240}\bigr)=-4$. As $n_{m+4}$ is odd, this yields
$v_2(\iota)=v_2(j_{m+4})-4=v_2(m+4)-1$. Combining this with
$v_2(D_{m+4})=2m+6-v_2(m+4)$ we obtain $v_2(Q+R)=2m+5$. Since
$v_2(P)=2m+6>2m+5=v_2(Q+R)$, it follows that $v_2(P+Q+R)=2m+5$, and hence
\[
   v_2(A_{m})=v_2(\kappa_m)-3+(2m+5)=e_m+4.
\]
Consequently
$v_2\bigl(\gcd(S_{m},A_{m},U_{m})\bigr)=\min\{e_m,\,e_m+4,\,e_m+8\}=e_m$. This completes the proof.
\end{proof}

We record only the $2$-primary valuation, as it is the only one determined in closed
form. For an odd prime $p$ one has $v_p(g_{m})=\min\{v_p(S_{m}),v_p(_{m}A),v_p(U_{m})\}$, and since
$\operatorname{odd}(S_{m})=(2^{2m-1}-1)\,n_m$ and $\operatorname{odd}(U_{m})=(2^{2m+7}-1)\,n_{m+4}$
with $\gcd(2^{2m-1}-1,2^{2m+7}-1)=2^{\gcd({2m-1,\,2m+7})}-1=1$, von Staudt--Clausen and Kummer's congruence
\cite[Ch.~15]{classical} give $v_p(g_{m})=0$ for every \emph{regular} $p$; thus the odd part of
$g_{m}$ is supported on \emph{irregular} primes dividing $n_{m}$
or $n_{m+4}$. Accordingly, $g=2^{e_m}$ if and only if no irregular
prime dividing $\gcd(S_{m},U_{m})$ divides $A_{m}$. This last statement, which holds for
$1\le m\le 40$, is the step we do not resolve for all $m$, owing to the arithmetic of
irregular primes, whose occurrence admits no closed description: an irregular prime $p$
may divide both $S_{m}$ and $U_{m}$ (for instance $p=5209$ at $m=322$: $5209\mid n_{322}$, so $5209\mid S_{322}$; and
$2^{651}\equiv 1\pmod{5209}$, so $5209\mid 2^{2m+7}-1$ and so $5209\mid U_{322}$), and
whether it also divides $A_{m}$ is governed by the residue of $c_m$ modulo $p$, which the
congruence~\eqref{eq:cm-congruence} does not control. Irregular primes lack a structural description that would exclude such coincidences
uniformly in $m$.
\begin{acknow}
The authors are sincerely grateful to Ramesh Kasilingam for his
valuable suggestions, comments, and probing questions, which have
improved the quality of this article. They also thank Wolfgang
L\"uck and Tibor Macko for answering several questions. The first author also
gratefully acknowledges financial support from the Prime Minister's
Research Fellowship, Government of India (PMRF ID:~2502403). 
\end{acknow}
\bibliographystyle{alpha}
\bibliography{reference1}
\end{document}